\newtheorem{thm}{Theorem}[section]
\newtheorem{lem}[thm]{Lemma}
\newtheorem{prob}{Problem}
\newtheorem{cor}[thm]{Corollary}
\newtheorem{ex}[thm]{Example}
\newtheorem{rem}[thm]{Remark}
\renewcommand{\comment}[1]{}
\renewenvironment{proof}{\noindent {\it Proof.}}{$\Box$\\}
\newcommand{\dontshow}[1]{}
\newcommand{\ov}{\overline}
\begin{document}

\begin{center}
{\LARGE Partial domination - the isolation number of a graph}
\mbox{}\\[8ex]

\begin{multicols}{2}

Yair Caro\\[1ex]
{\small Dept. of Mathematics and Physics\\
University of Haifa-Oranim\\
Tivon 36006, Israel\\
yacaro@kvgeva.org.il}

\columnbreak

Adriana Hansberg\\[1ex]
{\small Instituto de Matem\'aticas\\
UNAM Juriquilla\\
Quer\'etaro, Mexico\\
ahansberg@im.unam.mx}\\[2ex]

\end{multicols}

\end{center}

\begin{abstract}

We prove the following result: If $G$ be a connected graph on $n \ge 6$ vertices, then there exists a set of vertices $D$ with $|D| \le \frac{n}{3}$ and such that $V(G) \setminus N[D]$ is an independent set, where $N[D]$ is the closed neighborhood of $D$. Furthermore, the bound is sharp. This seems to be the first result in the direction of partial domination with constrained structure on the graph induced by the non-dominated vertices, which we further elaborate in this paper.
 \\

\noindent
{\small \textbf{Keywords:} isolation, domination, independent set} \\
{\small \textbf{AMS subject classification: 05C65}}
\end{abstract}

\section{Introduction}

Many variants of the basic topic of domination can be formulated as follows. Let $G$ be a graph and $D$ a set of vertices of $G$ such that  $D \cup  N(D) = V(G)$,  where $N(D)$ is the set of neighbors of vertices in $D$ that do not belong to $D$. Given two graph properties $\mathcal{P}$ and $\mathcal{Q}$, we say that $D$ is a $(\mathcal{P,Q})$-dominating set if the graph induced by $D$ has property $\mathcal{P}$ and the graph induced by $N(D)$ has property $\mathcal{Q}$.
 
Among the best known properties $\mathcal{P}$ are: \emph{connected domination}, where the graph induced by $D$ is connected (see \cite{DHH, KSKW, CWY}), \emph{$k$-connected domination}, where the graph induced by $D$ is $k$-connected (see \cite{LWAB, TZTX}), \emph{total domination} where the graph induced by $D$ has no isolates (see \cite{He, HeYe}), \emph{paired domination}, where the graph induced by $D$ has a perfect matching (see \cite{DeHe}), and many more.
 
Among the best known properties $\mathcal{Q}$ are: \emph{$k$-domination}, where $|N(u)\cap D| \ge k$ for every $u \in N(D)$ (see \cite{Ha, FaHaVo}), \emph{locating domination}, where  $N(u) \cap D \neq N(v) \cap D$ for any $u ,v \in N(D)$ (see \cite{Sla}), \emph{fair domination}, introduced in \cite{CaHaHe}, where $|N(u) \cap D| =| N(v) \cap D|$ for any $u ,v \in N(D)$, and many more.

There are also variants of domination which set a condition on both $D$ and $N(D)$, like the \emph{$k$-tuple domination}, where $|N[u]\cap D| \ge k$ for every $u \in V(G)$ (see \cite{CoTh, GaZv, RaVo}), or the \emph{identifying codes}, where $\emptyset \neq N(u) \cap D \neq N(v) \cap D \neq \emptyset$ for any $u ,v \in V(G)$ (see \cite{Fou}). See also the survey \cite{ChFaHaVo} for more information on multiple domination parameters.

Here we are going to present, for the first time to our knowledge, the following extension of this model to the case that $D \cup N(D) \neq V(G)$, that is, we do not necessarily assume that $D$ is a dominating set. 

The reason for doing so is because there may be situations in which we do not necessarily need to dominate all the vertices in a graph. Les us illustrate this by the following example. Suppose $G$ represents a communication network, and a security agency wants to detect every conversation between two members, i.e. two adjacent vertices, in the network. We can imagine some centers forming the vertices of $D$ which can listen to all nodes in $N[D]$ and, as long as $V \setminus N[D]$ is an independent set, they can still listen to all conversations having located microphones in all vertices of $D$.  
 
 
Let $G$ be a graph, $D \subseteq V(G)$ a set, $N(D)$ the neighborhood of $D$ and $R(D) = V(G) \setminus N[D]$  the remainder with respect to $D$.  When $R(D) = \emptyset$, $D$ is a dominating set. 
 
So, in addition to the properties $\mathcal{P}$ and $\mathcal{Q}$ imposed on $D$ and $N(D)$, we would like to impose a property $\mathcal{R}$ on $R(D)$, where $\mathcal{R}$ can be the property of $R(D)$ being a $k$-independent set or inducing a forest, a planar graph, a $K_k$-free graph, or a $k$-colorable graph, to mention some examples.
 
Problems involving partitions of this type $D \cup N(D) \cup  R(D) = V(G) $ with D having property $\mathcal{P}$, $N(D)$ having property $\mathcal{Q}$ and $R(D)$ having property $\mathcal{R}$ are called constrained partial domination problems.  
 
In this paper, we shall consider simple constrained partial domination problems with no restriction on properties $\mathcal{P}$ or $\mathcal{Q}$ but with $R(D)$ having certain property $\mathcal{R}$ as mentioned above: being a $k$-independent set, or inducing a forest, a planar graph, a $K_k$-free graph or a $k$-colorable graph, etc.

\subsection{Notation}

For notation and Graph Theory terminology we in general
follow~\cite{West}. Specifically, let $G = (V, E)$ be a simple graph with
vertex set $V = V(G)$ of order~$n(G) = |V|$ and edge set $E = E(G)$ of size~$m(G) =|E|$. For a subset $S \subseteq V$, the \emph{open neighborhood} of $S$ is the set $N_G(S) = \{u \in V \setminus S \, | \, uv \in E, v \in S\}$, while the \emph{closed neighborhood} of $S$ is the set $N_G[S] = N_G(S) \cup S$. When $S = \{v\}$, we write $N_G(v)$ and $N_G[v]$ for the open and closed neighborhoods of $v$, respectively. The \emph{degree} of a vertex $v$ is $\deg_G(v) = |N_G(v)|$. When the graph is clear from the context, we may write $N(S), N[S], N(v), N[v], \deg(v)$. The \emph{minimum degree} $\delta(G)$ of a graph $G$ is the minimum among all vertex degrees of $G$. Likewise, the \emph{maximum degree} $\Delta(G)$ of $G$ is the maximum among all vertex degrees of $G$.  We call a vertex $v$ \emph{isolate} or say it is \emph{isolated} in $G$ if $\deg_G(v) = 0$. Moreover, a vertex $v$ is called \emph{leaf} if $\deg_G(v) = 1$.

The \emph{complement} $\overline{G}$ of a graph $G = (V,E)$ is the graph consisting of the vertex set $V$ and all edges between vertices of $V$ that do not belong to $E$. Given a subset $S \subseteq V$, the graph $G[S]$ denotes the subgraph of $G$ induced by $S$ and, for an integer $t \ge 1$, $tG$ denotes the graph consisting of $t$ vertex disjoint copies of $G$. Given a graph $H$, we say that  $G$ is \emph{$H$-free} if $G$ does not contain $H$ as a subgraph (observe that $H$ has not to be necessarily induced). 

The \emph{complete graph} on $n$ vertices is denoted by $K_n$ and we write $K_{p,q}$ for the \emph{complete bipartite graph} with partition sets of $p$ and $q$ vertices. Further, the \emph{cycle} and the \emph{path} on $n$ vertices are written as $C_n$ and $P_n$, respectively. We define the {\it cartesian product} of two graphs $G_1$ and $G_2$ as the graph $G_1 \times G_2$ with vertex set $V(G_1)  \times V(G_2)$ and such that two vertices $(u_1,u_2)$ and $(v_1,v_2)$ are adjacent if and only if either $u_1 = v_1$ and $u_2v_2 \in E(G_2)$ or $u_2 = v_2$ and $u_1v_1 \in E(G_1)$. For $t, s \ge 1$, the graph $G = P_s \times P_t$ is called \emph{grid graph}.

We call a subset $S \subseteq V$ \emph{dominating} in $G = (V,E)$ if $|N(v) \cap S| \ge 1$ for all $v \in V \setminus S$. The minimum cardinality of a dominating set of $G$ is denoted by $\gamma(G)$. On the other hand, $S$ is called \emph{independent} if $G[S]$ is the empty graph and with $\alpha(G)$ we denote the maximum cardinality of an independent set of $G$. For $k \ge 0$, $S$ is a $k$-independent set if $\Delta(G[S]) \le k$ and $\alpha_k(G)$ denotes the maximum cardinality of a $k$-independent set of $G$. Observe that $0$-independent is the same as independent.

\subsection{The isolation number of a graph} 

We will now introduce a more formal definition and language for the main protagonist of this paper: the isolation number of a graph.

Let $G = (V,E)$ be a graph and $\mathcal{F}$ a family of graphs. We call a set of vertices $S \subseteq V$ \emph{$\mathcal{F}$-isolating} if the graph induced by the set $R(S) = V \setminus N[S]$ contains no member of $\mathcal{F}$ as a subgraph.

\begin{ex} Consider the following families $\mathcal{F}$ of graphs.
\begin{enumerate}
\item[(1)] If $\mathcal{F} = \{ K_1 \}$, then an $\mathcal{F}$-isolating set coincides with the usual definition of a dominating set.
\item[(2)] If $\mathcal{F} = \{K_2\}$, then the vertices not dominated by the $\mathcal{F}$-isolating set form an independent set.
\item[(3)] If $\mathcal{F} = \{K_{1,k+1}\}$ for an integer $k \ge 0$, then the set of vertices not dominated by the $\mathcal{F}$-isolating set induces a graph of maximum degree at most $k$ or, in other words, these vertices form a $k$-independent set (see \cite{CaHa, ChFaHaVo} for more information and recent results).
\item[(4)] If $\mathcal{F} = \{C_k \; | \; k \ge 3\}$, then the vertices not dominated by the $\mathcal{F}$-isolating set induce a forest.
\item[(5)] If $\mathcal{F}$ is the family of all trees on $k$ vertices, then the non-dominated vertices form a subgraph whose components have all order at most $k-1$.
\end{enumerate}
\end{ex}

The minimum cardinality of an $\mathcal{F}$-isolating set of a graph $G$ will be denoted $\iota(G, \mathcal{F})$ and called the \emph{$\mathcal{F}$-isolation number} of $G$. When $\mathcal{F} = \{H\}$, we will set $\iota(G,\mathcal{F}) = \iota_H(G)$. In case $\mathcal{F} = \{K_{1,k+1}\}$, we shall use the notation $\iota_k(G)$, and when $k = 0$ we will write for short $\iota(G)$ instead of $\iota_0(G)$. Finally, $\iota_k(G)$ will be called the \emph{$k$-isolation number} and $\iota(G)$ just \emph{isolation number}.

\subsection{Structure of the paper}

The paper is organized as follows:

In Section 2 we give some basic properties and examples concerning  $\iota(G,\mathcal{F})$, relating $\mathcal{F}$-isolating sets to dominating sets, as well as some concrete constructions that will be useful in later sections. Later on we will mainly deal with $\iota(G)$ and sometimes $\iota_k(G)$.
 
In Section 3, we consider upper bounds on $\iota(G)$ and $\iota_k(G)$  in terms of order, maximum degree and minimum degree with special emphasis on $G$ being a connected graph, and prove some sharpness results as well.

In Section 4, we consider lower bounds on $\iota(G)$ and $\iota_k(G)$ in terms of average degree, maximum degree and minimum degree and prove some sharpness results.

In Section 5, we consider some classes of graphs such as trees, maximal outerplanar graphs, claw-free graphs and grid graphs, and compute some sharp results for $i(G)$ for graphs in these families.

In Section 6, we deal with Nordhaus-Gaddum type results for $\iota(G)+\iota(\overline{G})$  and prove sharp upper bounds.

In the closing section 7, we introduce some open problems for further research.\\

\section{Basic examples and facts}

\begin{ex}
Consider the following examples.
\begin{enumerate}
\item[(1)] For $k \le n-1$, $\iota_k(\overline{K_n}) = 0$ and $\iota_k(K_n) = 1$.
\item[(2)] $\iota(C_n) = \lceil n/4\rceil$, $\iota_1(C_n) = \lceil \frac{n}{5} \rceil $, $\iota(P_n) = \lceil (n-1)/4 \rceil$ and $\iota_1(P_n) = \lceil \frac{n-2}{5} \rceil$.
\item[(3)] If $P$ is the Petersen-graph, $\iota(P) =  3$, $\iota_1(P) = 2$ and $\iota_2(P) = 1$.
\item[(4)] $\iota$ is a non-monotone parameter with respect to edge-deletion: \[\iota(K_5) = 1  < \iota(C_5) = 2 > \iota(P_5) = 1.\]
\end{enumerate}
\end{ex}
%
%

\begin{lem}\label{la-family}
Let $G$ be a graph on $n$ vertices and $\mathcal{F}$ and $\mathcal{F}'$ be two families of graphs. The following assertions hold.
\begin{enumerate}
\item[(i)] If $\mathcal{F}' \subseteq \mathcal{F}$, then $\iota(G,\mathcal{F}') \le \iota(G, \mathcal{F})$.
\item[(ii)] If $F_1, F_2 \in \mathcal{F}$ such that $F_1 \subseteq F_2$ and $\mathcal{F}' = \mathcal{F} \setminus \{F_2\}$, then $\iota(G, \mathcal{F}) = \iota(G,\mathcal{F}')$.
\item[(iii)] If for all $F' \in \mathcal{F}'$ there is an $F \in \mathcal{F}$ such that $F \subseteq F'$, then $\iota(G, \mathcal{F}') \le \iota(G,\mathcal{F})$.
\end{enumerate}
\end{lem}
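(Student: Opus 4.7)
The plan is to show that all three parts follow essentially by unpacking the definition of an $\mathcal{F}$-isolating set, and that (i) and (ii) can be obtained as immediate corollaries of (iii). Recall that $S \subseteq V(G)$ is $\mathcal{F}$-isolating precisely when the induced subgraph on $R(S) = V(G)\setminus N[S]$ contains no member of $\mathcal{F}$ as a subgraph.

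For (iii), I would take a minimum $\mathcal{F}$-isolating set $S$ of $G$ and argue that $S$ is automatically $\mathcal{F}'$-isolating: if $R(S)$ contained some $F' \in \mathcal{F}'$ as a subgraph, then by the hypothesis there would exist $F \in \mathcal{F}$ with $F \subseteq F'$, and by transitivity of the subgraph relation $F$ would also sit inside $R(S)$, contradicting the assumption that $S$ is $\mathcal{F}$-isolating. Hence every $\mathcal{F}$-isolating set is $\mathcal{F}'$-isolating, and minimizing over $|S|$ yields $\iota(G,\mathcal{F}') \le \iota(G,\mathcal{F})$.

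Part (i) then falls out as the special case of (iii) in which we take $F = F'$ for every $F' \in \mathcal{F}' \subseteq \mathcal{F}$. For (ii), the inequality $\iota(G,\mathcal{F}') \le \iota(G,\mathcal{F})$ is just (i) applied to $\mathcal{F}' = \mathcal{F}\setminus\{F_2\} \subseteq \mathcal{F}$. For the reverse inequality I would invoke (iii) with the roles of the two families swapped: for every $F \in \mathcal{F}$ I need some $F^{\ast} \in \mathcal{F}'$ with $F^{\ast} \subseteq F$. If $F \ne F_2$, take $F^{\ast} = F$; if $F = F_2$, take $F^{\ast} = F_1$, which still lies in $\mathcal{F}' = \mathcal{F}\setminus\{F_2\}$ since $F_1 \ne F_2$, and clearly satisfies $F_1 \subseteq F_2 = F$. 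Thus (iii) delivers $\iota(G,\mathcal{F}) \le \iota(G,\mathcal{F}')$, and equality follows.

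There is no genuine obstacle here: all three statements are monotonicity properties of a minimum taken over a feasibility condition defined by forbidden subgraphs, so the argument is purely set-theoretic after the definition is unpacked. The only point worth flagging in the writeup is the tacit assumption in (ii) that $F_1$ and $F_2$ are distinct members of $\mathcal{F}$, which is automatic since $\mathcal{F}$ is regarded as a set of graphs.
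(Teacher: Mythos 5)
Your proof is correct and rests on the same one-line observation as the paper's: transitivity of the subgraph relation lets an $\mathcal{F}$-isolating set serve as an $\mathcal{F}'$-isolating set whenever every member of $\mathcal{F}'$ contains a member of $\mathcal{F}$. The only difference is organizational — you prove (iii) first and derive (i) and the reverse inequality of (ii) as corollaries, whereas the paper runs the identical argument inline for each part — and your remark about $F_1 \neq F_2$ in (ii) is a reasonable flag (the paper tacitly assumes the same).
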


\begin{proof} 
(i) Let $\mathcal{F}' \subseteq \mathcal{F}$ and let $S$ be a minimum $\mathcal{F}$-isolating set of $G$. Then, clearly, $S$ is an $\mathcal{F}'$-isolating set of $G$ and thus $\iota(G,\mathcal{F}') \le \iota(G, \mathcal{F})$.\\
(ii) By item~(i), the inequality $\iota(G, \mathcal{F}') \le \iota(G,\mathcal{F})$ is clear. So let $S$ be a minimum $\mathcal{F}'$-isolating set of $G$. Then $G - N[S]$ is $\mathcal{F}'$-free and thus, in particular, $F_1$-free. Since $F_1$ is a subgraph of $F_2$, $G - N[S]$ has to be $F_2$-free, too. Hence, $G - N[S]$ is $\mathcal{F}$-free and we obtain $\iota(G, \mathcal{F}) \le |S| = \iota(G,\mathcal{F}')$. Now the both inequalities imply the result.\\
(iii) Let $S$ be a minimum $\mathcal{F}$-isolating set of $G$. Then $G - N[S]$ is $\mathcal{F}$-free. Since for all $F' \in \mathcal{F}'$ there is an $F \in \mathcal{F}$ such that $F \subseteq F'$, $G - N[S]$ is also $\mathcal{F}'$-free. Hence, $S$ is an $\mathcal{F}'$-isolating set of $G$ and thus $\iota(G, \mathcal{F}') \le \iota(G,\mathcal{F})$.
\end{proof}

In the sequel of this paper and in view of Lemma~\ref{la-family}~(ii), we will consider only families of graphs without inclusions among its members.

\begin{lem}\label{la-dom-sum}
Let $G$ be a graph on $n$ vertices and $\mathcal{F}$ a family of graphs. The following assertions hold.
\begin{enumerate}
\item[(i)] $\iota(G,\mathcal{F}) \le \gamma(G)$.
\item[(ii)]$\iota(G, \mathcal{F}) = \min \{ \iota(G[A], \mathcal{F}) + \gamma(G[B]) \;|\; A \cup B \mbox{ is a partition of } V \}$.
\item[(iii)] $\iota(G, \mathcal{F}) = \min \{ \gamma(G \setminus H) \;|\; H \mbox{ is an induced $\mathcal{F}$-free subgraph of } $G$ \}$.
\end{enumerate}
\end{lem}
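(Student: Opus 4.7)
The plan is to prove the three items directly from the definitions, with (ii) and (iii) each handled as two separate inequalities.

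For item (i), the argument is immediate: if $D$ is a dominating set of $G$, then $N_G[D]=V$, so $R(D)=\emptyset$, and an empty induced subgraph contains no member of $\mathcal{F}$; hence $D$ is itself $\mathcal{F}$-isolating. Taking $D$ of minimum size gives $\iota(G,\mathcal{F})\le\gamma(G)$.

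For item (ii), I would prove both inequalities. For the upper bound, fix any partition $V=A\cup B$, take a minimum $\mathcal{F}$-isolating set $S_A$ of $G[A]$ and a minimum dominating set $S_B$ of $G[B]$, and set $S=S_A\cup S_B$. The key observation is that $B\subseteq N_G[S_B]\subseteq N_G[S]$, so $R_G(S)\subseteq A\setminus N_G[S]\subseteq A\setminus N_{G[A]}[S_A]$; the induced subgraph on the right is $\mathcal{F}$-free by choice of $S_A$, and $G[R_G(S)]$ is an induced subgraph of it, hence also $\mathcal{F}$-free. Thus $S$ is $\mathcal{F}$-isolating and $|S|=|S_A|+|S_B|=\iota(G[A],\mathcal{F})+\gamma(G[B])$. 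For the reverse inequality, let $S$ be a minimum $\mathcal{F}$-isolating set of $G$, set $A=R(S)$ and $B=N_G[S]$. Since $G[A]$ is already $\mathcal{F}$-free, $\iota(G[A],\mathcal{F})=0$, and $S\subseteq B$ dominates $G[B]$ (every vertex of $B$ is in $S$ or has a neighbor in $S$, and both endpoints lie in $B$), giving $\gamma(G[B])\le|S|$. This produces a partition whose cost is at most $\iota(G,\mathcal{F})$.

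For item (iii), I would again do both directions, and the argument is essentially a repackaging of (ii). For the upper bound, given any induced $\mathcal{F}$-free subgraph $H$ of $G$, a minimum dominating set $D$ of $G\setminus H:=G[V\setminus V(H)]$ satisfies $V\setminus V(H)\subseteq N_G[D]$, so $R_G(D)\subseteq V(H)$, and $G[R_G(D)]$ is an induced subgraph of $H$, hence $\mathcal{F}$-free; this shows $\iota(G,\mathcal{F})\le\gamma(G\setminus H)$. Conversely, for a minimum $\mathcal{F}$-isolating set $S$ of $G$, take $H=G[R(S)]$, which is $\mathcal{F}$-free by definition; then $V\setminus V(H)=N_G[S]$ and $S$ dominates $G[N_G[S]]=G\setminus H$, so $\gamma(G\setminus H)\le|S|=\iota(G,\mathcal{F})$.

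The only subtle point — and what I would consider the main thing to get right — is the adjacency bookkeeping when moving between $G$, $G[A]$, $G[B]$, and $G\setminus H$: one must verify that inclusions of neighborhoods are preserved in both directions, in particular that domination inside the induced subgraph $G[B]$ really suffices to cover $B$ inside $G$ (which it does because both endpoints of each covering edge lie in $B$), and that the $\mathcal{F}$-free property transfers to the genuinely smaller remainder $R_G(S)$ obtained after adding $S_B$. Once these inclusions are laid out carefully, both directions of (ii) and (iii) reduce to one-line verifications.
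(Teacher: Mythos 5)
Your proof is correct and follows essentially the same route as the paper: (i) by observing a dominating set has empty remainder, (ii) by combining a minimum isolating set of $G[A]$ with a minimum dominating set of $G[B]$ for one direction and taking the partition $A=R(S)$, $B=N[S]$ for the other, and (iii) as a repackaging of (ii). If anything, your writeup of the reverse direction of (iii) — explicitly verifying $\gamma(G\setminus H)\le|S|$ for $H=G[R(S)]$ — is slightly cleaner than the paper's, which states an inequality in the less useful direction at that point.
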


\begin{proof}
(i) Since every dominating set is also an $\mathcal{F}$-isolating set, clearly $\iota(G, \mathcal{F}) \le \gamma(G)$.\\
(ii) Let $V = A \cup B$ be a partition of $V$. Let $I$ be a minimum $\mathcal{F}$-isolating set of $G[A]$ and $D$ a minimum dominating set of $G[B]$. Then $I \cup D$ is an $\mathcal{F}$-isolating set of $G$ and hence $\iota(G, \mathcal{F}) \le |I| + |D| = \iota(G[A]) + \gamma(G[B])$. Conversely, let $S$ be a minimum $\mathcal{F}$-isolating set of $G$ and let $A = V \setminus N[S]$ and $B = N[S]$. Then $G[A]$ is $\mathcal{F}$-free and hence $\iota(G[A], \mathcal{F}) = 0$. Moreover, $|S| \ge \gamma(G[B])$. Thus, $\iota(G, \mathcal{F}) = |S| \ge \iota(G[A], \mathcal{F}) + \gamma(G[B])$.\\
(iii) Let $H$ be an induced $\mathcal{F}$-free graph of $G$. Then, by item (ii), $\iota(G, \mathcal{F}) \le \iota(H, \mathcal{F}) + \gamma(G \setminus H) = \gamma(G \setminus H)$. On the other side, let $S$ be a minimum $\mathcal{F}$-isolating set of $G$ and let $H = G -N[S]$. Then $H$ is $\mathcal{F}$-free. Again by item(ii), it follows that $|S| = \iota(G, \mathcal{F}) \le \iota(H), \mathcal{F}) + \gamma(G \setminus H)$.\\
\end{proof}

\begin{lem}\label{la-dom-quot}
Let $G$ be a graph on $n$ vertices. The following assertions hold.
\begin{enumerate}
\item[(i)] For a graph $H$, $\iota_{H}(G) \le \gamma(H) \left\lfloor \frac{n}{n(H)} \right\rfloor$ and this is sharp if $\gamma(H) = 1$.
\item[(ii)] For a family of graphs $\mathcal{F}$, $\iota(G, \mathcal{F}) \le \displaystyle \sup_{F \in \mathcal{F}}\; \frac{\gamma(F)}{n(F)} n$.
\end{enumerate}
\end{lem}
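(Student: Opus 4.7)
The plan is to prove both parts through a single packing-and-dominate argument. For part (i), I would fix a \emph{maximum} collection $H_1, H_2, \ldots, H_t$ of pairwise vertex-disjoint subgraphs of $G$, each isomorphic to $H$; vertex-disjointness forces $t\, n(H) \le n$, so $t \le \lfloor n/n(H) \rfloor$. For each $i$, pick a dominating set $D_i$ of $H_i$ with $|D_i| = \gamma(H)$ and set $S = \bigcup_{i=1}^t D_i$. The key observation is that since $H_i$ is a subgraph of $G$, every edge of $H_i$ is also an edge of $G$, so $D_i$ dominates $V(H_i)$ already in $G$ and $V(H_i) \subseteq N_G[S]$. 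If $G - N[S]$ contained a copy $H'$ of $H$, then $\{H', H_1, \ldots, H_t\}$ would be a strictly larger vertex-disjoint family of copies of $H$ in $G$, contradicting maximality. Thus $S$ is $H$-isolating and $\iota_H(G) \le |S| \le t\,\gamma(H) \le \gamma(H)\lfloor n/n(H) \rfloor$.

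For sharpness when $\gamma(H) = 1$, take $G = tH$, so $n = t\,n(H)$. Any $H$-isolating set $S$ must meet every component: if some component $C \cong H$ is disjoint from $S$, then because distinct components are non-adjacent no vertex of $C$ lies in $N[S]$, and $C$ survives as a copy of $H$ in $G - N[S]$. Hence $|S| \ge t = \gamma(H)\lfloor n/n(H)\rfloor$, matching the upper bound.

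For part (ii), I would repeat the packing argument with mixed copies: take a \emph{maximal} collection $H_1, \ldots, H_t$ of pairwise vertex-disjoint subgraphs of $G$, each isomorphic to some $F_i \in \mathcal{F}$, choose a dominating set $D_i$ of $H_i$ of size $\gamma(F_i)$, and let $S = \bigcup_i D_i$. By maximality, no member of $\mathcal{F}$ survives in $G - N[S]$, so $S$ is $\mathcal{F}$-isolating, and
\[
|S| \;\le\; \sum_{i=1}^t \gamma(F_i) \;=\; \sum_{i=1}^t \frac{\gamma(F_i)}{n(F_i)}\, n(F_i) \;\le\; \sup_{F \in \mathcal{F}} \frac{\gamma(F)}{n(F)} \cdot \sum_{i=1}^t n(F_i) \;\le\; \sup_{F \in \mathcal{F}} \frac{\gamma(F)}{n(F)} \cdot n,
\]
using $\sum_i n(F_i) \le n$. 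I do not foresee any real obstacle here; the only subtle point worth flagging is that a dominating set of a subgraph $H_i \subseteq G$ automatically dominates $V(H_i)$ in $G$, which is immediate from $E(H_i) \subseteq E(G)$ and which is the mechanism that converts a graph-theoretic packing into an isolating set.
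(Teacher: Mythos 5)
Your proof is correct and is essentially the paper's argument in non-inductive form: the paper peels off one copy of $H$ (resp.\ of some $F \in \mathcal{F}$) at a time by induction and dominates it, which is exactly your maximal vertex-disjoint packing with each copy dominated; your maximality argument plays the role of the paper's induction hypothesis in certifying that $G - N[S]$ is $H$-free (resp.\ $\mathcal{F}$-free). The sharpness example $G = tH$ is also the same, and your lower-bound justification for it is sound.
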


\begin{proof}
(i) We will prove by induction on $n$ that $\iota_H(G) \le \gamma(H) \lfloor \frac{n}{n(H)} \rfloor$. If $1 \le n \le n(H)$, then $G$ is either $H$-free or  $H$ is a spanning subgraph of $G$, i.e. $n = n(H)$. In the first case we have $\iota_H(G) = 0$ and the statement follows trivially. In the second, using Lemma \ref{la-dom-sum} (i), we have $\iota_H(G) \le \gamma(G) \le \gamma(H) = \gamma(H) \lfloor \frac{n}{n(H)} \rfloor$ and we are done. Hence, assume that $G$ has order $n \ge n(H)+1$ and suppose the statement holds for any graph with less than $n$ vertices. If $G$ is $H$-free, then clearly $\iota_H(G) = 0$ and we are done. Otherwise let us consider a copy $H^*$ of $H$ in $G$ and let $G' = G \setminus H^*$. Then, by the induction hypothesis, $\iota_H(G') \le \gamma(H) \lfloor\frac{n(G')}{n(H)}\rfloor$. Let $S$ be a minimum $H$-isolating set of $G'$ and let $D$ be a minimum dominating set of $H^*$. Since $G- (N_G[S \cup D])$ is a subgraph of $G'-N_{G'}[S]$ and the latter is $H$-free, $S \cup D$ is an $H$-isolating set of $G$. Hence, 
\[\iota_H(G) \le |S \cup D| = |S| + \gamma(H^*) = |S| + \gamma(H) \le  \gamma(H) \left\lfloor\frac{n(G')}{n(H)}\right\rfloor + \gamma(H) = \gamma(H) \left\lfloor\frac{n}{n(H)}\right\rfloor.\]
For the sharpness, consider the graph $G = t H$, where $H$ a nontrivial graph with $\gamma(H) = 1$. Then $n = n(G) = t \cdot n(H)$ and, clearly, $\iota_H(G) = t = \lfloor\frac{n}{n(H)} \rfloor = \gamma(H) \lfloor\frac{n}{n(H)} \rfloor$.\\
(ii) Let $q(\mathcal{F}) = \displaystyle \sup_{F \in \mathcal{F}} \; \frac{\gamma(F)}{n(F)}$. We will prove the statement by induction on $n$. If $G$ is $\mathcal{F}$-free, then we have $\iota(G, \mathcal{F}) = 0 \le q(\mathcal{F})n$. If $F \subseteq G$ and $n(F) = n$ for some $F \in \mathcal{F}$, then let $D$ be a minimum dominating set of $F$. Clearly, $D$ is a dominating set of $G$ and thus, using Lemma \ref{la-dom-sum}~(i), $\iota(G, \mathcal{F}) \le \gamma(G) \le \gamma(F) \le q(\mathcal{F}) n(F) = q(\mathcal{F}) n$. This covers the cases where $n \le \min\{n(F) \;|\; F \in \mathcal{F}\}$. Suppose that $n > \min\{n(F) \;|\; F \in \mathcal{F}\}$ and that we have proved the statement for all graphs of order less than $n$. We can assume that $G$ is not $\mathcal{F}$-free for otherwise it is done. Also the case $F \subseteq G$ and $n(F) = n(G)$ for an $F \in \mathcal{F}$ works as above. Hence, we can assume that there is a graph $F \in \mathcal{F}$ contained in $G$ such that the graph $G'$ obtained after deleting the vertices of $F$ in $G$ is not empty. Let $S$ be a minimum $\mathcal{F}$-isolating set of $G'$ and $D$ a minimum dominating set of $F$. By the induction hypothesis, $|S| = \iota(G', \mathcal{F}) \le q(\mathcal{F}) n(G')$. Moreover, $|D| = \gamma(F) \le q(\mathcal{F}) n(F)$. Since $S \cup D$ is an $\mathcal{F}$-isolating set of $G$, it follows that
\[\iota(G,\mathcal{F}) \le |S| + |D| \le q(\mathcal{F}) n(G') + q(\mathcal{F}) n(F) = q(\mathcal{F}) n.\]
\end{proof}

\begin{cor}\label{cor-bounds} Let $G$ be a graph on $n$ vertices. Then the following statements hold.
\begin{enumerate} 
\item[(i)] Let $k \ge 0$ be an integer such that $\Delta(F) \ge k+1$ for all $F \in \mathcal{F}$. Then $\iota(G, \mathcal{F}) \le \iota_k(G)$.
\item[(ii)] If $\mathcal{F}$ is a family of non-empty graphs, then $\iota(G, \mathcal{F}) \le \iota(G)$.
\item[(iii)] $\iota_k(G) \le \frac{1}{k+2}n$ and this is sharp; 
\end{enumerate}
\end{cor}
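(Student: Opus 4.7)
The plan is to reduce each of the three items to results already proved in the section (Lemma~\ref{la-family} and Lemma~\ref{la-dom-quot}) applied to carefully chosen families. Since $\iota_k(G)$ is by definition $\iota(G,\{K_{1,k+1}\})$ and $\iota(G) = \iota_0(G) = \iota(G,\{K_{1,1}\}) = \iota(G,\{K_2\})$, items (i) and (ii) will follow from the monotonicity of $\iota(G,\cdot)$ under a subgraph condition, and (iii) will follow from the $\gamma(H)\lfloor n/n(H)\rfloor$ bound.

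For (i), I would observe that the hypothesis $\Delta(F) \ge k+1$ for every $F \in \mathcal{F}$ is precisely the statement that each $F \in \mathcal{F}$ contains a vertex of degree at least $k+1$, hence contains $K_{1,k+1}$ as a subgraph. Writing $\mathcal{F}_0 = \{K_{1,k+1}\}$, this says that for every $F \in \mathcal{F}$ there exists $F_0 \in \mathcal{F}_0$ with $F_0 \subseteq F$. Lemma~\ref{la-family}(iii) (with $\mathcal{F}$ and $\mathcal{F}'$ swapped compared to the statement there, i.e.\ applied in the direction $\mathcal{F}_0 \to \mathcal{F}$) then yields $\iota(G,\mathcal{F}) \le \iota(G,\mathcal{F}_0) = \iota_k(G)$. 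Item (ii) is then the special case $k = 0$: every non-empty graph has a vertex of degree $\ge 1$, so it contains $K_2 = K_{1,1}$ as a subgraph, and applying (i) gives $\iota(G,\mathcal{F}) \le \iota_0(G) = \iota(G)$.

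For (iii), I would apply Lemma~\ref{la-dom-quot}(i) with $H = K_{1,k+1}$. Here $\gamma(K_{1,k+1}) = 1$ and $n(K_{1,k+1}) = k+2$, so the lemma immediately gives
\[
\iota_k(G) \;=\; \iota_{K_{1,k+1}}(G) \;\le\; \gamma(K_{1,k+1})\left\lfloor \frac{n}{k+2}\right\rfloor \;\le\; \frac{n}{k+2}.
\]
For the sharpness, I would exhibit the graph $G = t K_{1,k+1}$ on $n = t(k+2)$ vertices. Each of the $t$ copies of $K_{1,k+1}$ is itself not $K_{1,k+1}$-free, so every $K_{1,k+1}$-isolating set must contribute at least one vertex per component (a single vertex in a component does isolate it, since a vertex of $K_{1,k+1}$ dominates the whole star). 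Hence $\iota_k(G) = t = n/(k+2)$, matching the bound.

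None of the three parts should present a real obstacle: everything is an application of the already-proved Lemmas~\ref{la-family} and \ref{la-dom-quot} to the star family $\{K_{1,k+1}\}$. The only point requiring a line of care is the observation that $\Delta(F) \ge k+1$ guarantees $K_{1,k+1} \subseteq F$ as a (not necessarily induced) subgraph, which is immediate from picking a maximum-degree vertex together with $k+1$ of its neighbors.
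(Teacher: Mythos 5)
Your proof is correct and follows essentially the same route as the paper: item (i) via Lemma~\ref{la-family}(iii) applied to $\{K_{1,k+1}\}$ after noting $\Delta(F)\ge k+1$ forces $K_{1,k+1}\subseteq F$, item (ii) as the case $k=0$, and item (iii) via Lemma~\ref{la-dom-quot}(i) with $H=K_{1,k+1}$. Your added sharpness example $tK_{1,k+1}$ is the same one used for Lemma~\ref{la-dom-quot}(i) (only note that the single isolating vertex per component must be the center of the star, not an arbitrary vertex).
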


\begin{proof}
(i) Since $\Delta(F) \ge k+1$, the graph $K_{1,k+1} \subseteq F$ for all $F \in \mathcal{F}$ and thus $\iota(G, \mathcal{F}) \le \iota_k(G)$ follows from \ref{la-family} (iii).\\
(ii) This is item (i) for $k=0$.\\
(iii) This follows directly by Lemma~\ref{la-dom-quot}~(i) using $H =  K_{1,k+1}$.
\end{proof}

\comment{
In Section \ref{sec:upper-bounds} we are going to give a substantial improvement of Corollary \ref{cor-bounds} (iii) when $k=0$ and $G$ is a connected graph different from the $C_5$-cycle or the $K_2$. Namely, we will show that $\iota(G) \le \frac{n}{3}$ if  (Theorem \ref{thm-n/3}). We would like to stress here that, in view of Corollary \ref{cor-bounds} (ii), this bound yields us $\iota(G, \mathcal{F}) \le i_1(G) \le \iota(G) \le \frac{n}{3}$ for a graph $G$ of order $n$ whose components are all different from the $C_5$-cycle or the $K_2$ and where $\mathcal{F}$ is the family of all cycles. However, $\iota_1(K_2) = 0 < \frac{n(K_2)}{3}$ and $\iota_1(C_5) = 1 < \frac{n(C_5)}{3}$ and thus $\iota(G, \mathcal{F}) \le i_1(G) \le \frac{n}{3}$ holds always.
}



Given a graph $G$ with vertex set $V = \{v_1, v_2, \ldots, v_n\}$, we define a bipartite graph $B(G)$ the following way. Let $V_1 = \{v^1_1, v^1_2, \ldots, v^1_n\}$ and $V_2 = \{v^2_1, v^2_2, \ldots, v^2_n\}$ be the two partite sets of $B(G)$ and, for $i, j \in \{1,2, \ldots, n\}$, let $v^1_i$ be adjacent to $v^2_j$ if and only if either $i = j$ or $v_i$ is adjacent to $v_j$ in $G$. Note that $|B(G)| = 2 n$ and $\delta(B(G)) = \delta(G) + 1$.

The following (technical) theorem will be used several times in the sequel.

\begin{thm}\label{thm-dom}
Let $G$ be a graph of order $n$. Let $\mathcal{F}$ be a family of graphs with $r = \min \{n(F) \;|\; F \in \mathcal{F}\}$. Then
\begin{enumerate}
\item[(i)] $\gamma(G) \le \iota(G \times K_r, \mathcal{F}) \le \min\{n, (r-1) \gamma(G) + \iota(G,\mathcal{F}), r \iota(G,\mathcal{F}) + \alpha(G, \mathcal{F}) \} \le \min \{n,r \gamma(G)\}$
\item[(ii)] $\gamma(G) \le \iota(B(G)) \le \gamma(B(G)) \le 2 \gamma(G)$.
\end{enumerate}
\end{thm}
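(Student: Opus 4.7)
For the lower bound $\gamma(G) \le \iota(G \times K_r,\mathcal{F})$, I would take a minimum $\mathcal{F}$-isolating set $S$ of $G\times K_r$ and project it onto $V(G)$: let $D = \{v \in V(G) : (v,i) \in S \text{ for some } i\}$, so that $|D| \le |S|$. I claim $D$ dominates $G$. If not, choose $v \in V(G) \setminus N_G[D]$; unwinding the Cartesian-product adjacency shows $(v,i) \notin N[S]$ for every $i$, so the fiber $\{v\} \times V(K_r)$ survives in $G\times K_r - N[S]$ as a copy of $K_r$. Since $r = \min\{n(F):F\in\mathcal{F}\}$, some $F \in \mathcal{F}$ has exactly $r$ vertices and embeds into $K_r$, contradicting that $S$ is $\mathcal{F}$-isolating. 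Hence $|S| \ge |D| \ge \gamma(G)$.

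For the upper bounds I would exhibit three explicit constructions. First, $V(G)\times\{1\}$ dominates $G\times K_r$, giving the bound $n$. Second, let $D$ be a minimum dominating set and $I$ a minimum $\mathcal{F}$-isolating set of $G$, and take $S_1 = (D \times \{1,\dots,r-1\}) \cup (I \times \{r\})$: the first $r-1$ fibers are entirely dominated through $D$'s domination of $G$, and any $(v,r) \notin N[S_1]$ forces $v \notin D \cup N[I]$, so the remainder is an induced subgraph of $G[R(I)]$, which is $\mathcal{F}$-free. Third, let $S_2 = (I \times V(K_r)) \cup (R(I) \times \{1\})$: vertices with first coordinate in $N[I]$ are dominated by $I \times V(K_r)$, while those above $R(I)$ are reached through the $K_r$-fiber from $(v,1)$; since $G[R(I)]$ is $\mathcal{F}$-free, $|R(I)| \le \alpha(G,\mathcal{F})$, giving $|S_2| \le r\iota(G,\mathcal{F}) + \alpha(G,\mathcal{F})$. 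The final chain $\min\{n,\ldots\} \le \min\{n, r\gamma(G)\}$ follows from $\iota(G,\mathcal{F}) \le \gamma(G)$ (Lemma~\ref{la-dom-sum}(i)), which gives $(r-1)\gamma(G) + \iota(G,\mathcal{F}) \le r\gamma(G)$.

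\textbf{Part (ii).} The inequality $\iota(B(G)) \le \gamma(B(G))$ is Lemma~\ref{la-dom-sum}(i) applied to $B(G)$. For $\gamma(B(G)) \le 2\gamma(G)$, I would verify that if $D$ is a minimum dominating set of $G$, then its two copies $\{v_i^1 : v_i \in D\} \cup \{v_i^2 : v_i \in D\}$ dominate $B(G)$, using that $v_i^1 \sim v_j^2$ iff $i=j$ or $v_iv_j \in E(G)$. The main step is $\gamma(G) \le \iota(B(G))$: let $S$ be a minimum isolating set of $B(G)$ and set $D = \{v_i : v_i^1 \in S \text{ or } v_i^2 \in S\}$, so $|D| \le |S|$. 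If some $v_i$ were not dominated by $D$ in $G$, the neighbors of $v_i^1$ in $B(G)$---namely $v_i^2$ together with $\{v_j^2 : v_iv_j \in E(G)\}$---all have underlying index outside $D$, so $v_i^1 \notin N_{B(G)}[S]$; symmetrically $v_i^2 \notin N_{B(G)}[S]$. The diagonal edge $v_i^1 v_i^2$ then survives in $B(G) - N[S]$, contradicting that $S$ is isolating. The main bookkeeping subtlety throughout is cleanly distinguishing the two kinds of edges in $B(G)$ (the $i=j$ diagonal ones from the $G$-inherited ones) and handling the $r$-th fiber case analysis for the construction $S_1$; both become mechanical once the adjacency definitions are expanded.
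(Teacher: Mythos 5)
Your proposal is correct and follows essentially the same route as the paper: project a minimum isolating set onto $G$ and use the $K_r$-fibers (resp.\ the diagonal edges $v_i^1v_i^2$ of $B(G)$) to force domination, then exhibit the explicit sets $V(G)\times\{1\}$, $(D\times\{1,\dots,r-1\})\cup(I\times\{r\})$ and an $r\iota+\alpha$ construction for the upper bounds. In fact your third set $(I\times V(K_r))\cup(R(I)\times\{1\})$ is cleaner than the paper's, whose stated set $\bigcup_{i=1}^{r-1}S_i\cup(V_r\setminus S_r)$ appears to be a typo that does not match the claimed cardinality $r\,\iota(G,\mathcal{F})+\alpha(G,\mathcal{F})$.
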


\begin{proof}
(i) Let $V(G) = \{x_1, x_2, \ldots, x_n\}$ and let $H = G \times K_r$ be given by the vertex set $V(H) = V_1 \cup V_2 \cup \ldots \cup V_r$ with $V_i = \{x^i_1, x^i_2, \ldots, x^i_n\}$ where $H[V_i] \cong G$ for $i = 1, 2, \ldots, r$ and such that $x^i_k$ is adjacent to $x^i_k$ for any $1 \le i < j \le r$ and $1 \le k \le n$. For the first inequality, let $S$ be a minimum $\mathcal{F}$-isolating set of $H$. Let $Q$ be the set of vertices which are not dominated by $S$ in $H$ and let $Q_i = V_i \cap Q$ and $S_i = V_i \cap S$, $1\le i \le r$. We will show that the set $D =  \bigcup_{i=1}^{r} \{x^1_k \;|\; x^i_k \in S_i\}$ is a dominating set of $H[V_1] \cong G$.  Fix an integer $k \in \{1,2,\ldots, r\}$. If $x^1_k \in V_1 \setminus Q_1$ then either $x^1_k \in S_1$ or $x^1_k$ is dominated by a vertex in $S_1$ or $x^1_k$ is dominated by a vertex $x^j_k \in S_j$ for some $j \in \{2,3,\ldots,r\}$. The first and third cases imply that $x^1_k \in D$ while from the second case follows that $x^1_k$ is dominated by a vertex in $D$. Hence suppose that $x^1_k \in Q_1$. Note that, for any $\ell \in \{1,2,\ldots, n\}$, $H[\{x_{\ell}^i \;|\; 1 \le i \le r\}] \cong K_{r}$ is not $\mathcal{F}$-free. Thus, since $H[Q]$ is $\mathcal{F}$-free, there has to be a $j \in \{2,3,\ldots,r\}$ such that $x^j_k \notin Q_j$. Hence, either $x^j_k \in S_j$, or there is a vertex $x^j_{k'} \in S_j$ that dominates $x^j_k$, or $x^j_k$ is dominated by a vertex $x^{j'}_k \in S_{j'}$ for some $j' \in \{1,2,\ldots,r\}$, $j \neq j'$. This implies that either $x^1_k \in D$ or $x^1_{k'}$ dominates $x^1_k$. Hence, $D$ is a dominating set of $H[V_1] \cong G$ and thus $\gamma(G) \le |D| \le \sum_{i=1}^{r} |S_i| = |S| = \iota(H, \mathcal{F})$.\\
For the second inequality, note first that $V_1$ is a dominating set of $H$ and hence also an $\mathcal{F}$-isolating set of $H$. Thus $\iota(H, \mathcal{F}) \le n$. Now let $D$ and $S$ be, respectively, a minimum dominating set and a minimum $\mathcal{F}$-isolating set of $G$ and define $D_i = \{x^i_k \;|\; x_k \in D\}$ and $S_i = \{x^i_k \;|\; x_k \in S\}$, for $1 \le i \le r$. Then both $\cup_{i=1}^{r-1} D_i \cup S_r$ and $\cup_{i=1}^{r-1} S_i \cup (V_r \setminus S_r)$ are $\mathcal{F}$-isolating sets of $H$. Now the properties $|D_i| = \gamma(G)$, $|S_i| = \iota(G, \mathcal{F})$ and $|V_r \setminus S_r| \le \alpha(G, \mathcal{F})$ yield the desired inequality.\\
The last inequality is due to Lemma~\ref{la-dom-sum}~(i).
 \\
(ii) For the first inequality, let $S$ be a minimum isolating set of $B(G)$. Let $D = \{ v_k \;|\; v^i_k \in S \mbox{ for some $i$}\}$. Let $Q = V(B(G)) \setminus N_{B(G)}[S]$ be the set of isolated vertices in $B(G)$. Let $v_l$ be a vertex in $V(G) \setminus D$. Then $v^i_l \notin S$ for $i = 1, 2$. Since $v_l^1$ is adjacent to  $v_l^2$, one of both vertices, say $v_l^1$, is not in $Q$. This implies that $v_l^1 \in N_{B(G)}(S)$. Hence, there is a neighbor of $v_l^1$ in $S$ and thus there is a vertex in $D$ which is adjacent to $v_l$ in $G$. It follows that $D$ is a dominating set of $G$, which gives us $\gamma(G) \le |D| \le |S| = \iota(B(G))$.\\
The second inequality follows from Lemma \ref{la-dom-sum}~(i). For the last inequality, let $D$ be a minimum dominating set of $G$. Then $D' = \{v^1_l , v^2_l \;|\; v_l \in D\}$ is a dominating set of $B(G)$ and thus $2 \gamma(G) = 2|D| = |D'| \ge \gamma(B(G))$.
\end{proof}

\section{Upper bounds}\label{sec:upper-bounds}

In this section, we are going to present some upper bounds on $\iota(G)$ and $\iota_k(G)$ in terms of the order and the maximum and minimum degree of $G$. We will make emphasis on weather the graph is connected or not.

\subsection{Bounds in terms of order: connected graphs}

The following theorem, which is mentioned in the abstract, is one of the main results of this paper and a prototype of problems we pose in Section \ref{problems}.\\
 
\begin{thm}\label{thm-n/3}
Let $G$ be a connected graph on $n \ge 3$ vertices and different from $C_5$. Then $\iota(G) \le \frac{n}{3}$ and this bound is sharp.
\end{thm}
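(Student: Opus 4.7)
The plan is to proceed by strong induction on $n$. The base cases $n \in \{3,4,5\}$ are verified directly: for $n \in \{3,4\}$ any vertex of maximum degree yields $\iota(G) \le 1 \le n/3$; for $n=5$ with $G \neq C_5$, either $\Delta(G) \ge 3$ (and a single high-degree vertex dominates at least four vertices) or $\Delta(G) \le 2$ and $G = P_5$, for which the middle vertex gives $V \setminus N[S] = \{v_1, v_5\}$, a $2$-element independent set.

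For the inductive step with $n \ge 6$, the strategy is to select a vertex $v$ with $\deg v \ge 2$, place $v$ into the isolating set $S$, and recurse on $G' := G - N[v]$. Since $|N[v]| \ge 3$, whenever every component $H$ of $G'$ satisfies $\iota(H) \le n(H)/3$ one obtains
\[
\iota(G) \;\le\; 1 + \sum_{H} \iota(H) \;\le\; 1 + \frac{n - |N[v]|}{3} \;\le\; \frac{n}{3}.
\]
By the inductive hypothesis this componentwise bound holds whenever $n(H)=1$ (with $\iota(K_1)=0$) or $n(H) \ge 3$ and $H \neq C_5$, so the only obstructions are components of $G'$ isomorphic to $K_2$ or to $C_5$.

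The heart of the proof is thus a structural claim: in any connected $G$ on $n \ge 6$ vertices with $G \ne C_5$, one can choose $v$ with $\deg v \ge 2$ so that $G-N[v]$ has no $K_2$- and no $C_5$-component, or patch the construction otherwise. A natural candidate for $v$ is a support vertex (the neighbor of a leaf), which absorbs a leaf into $N[v]$ and sharply constrains the remainder; if $\delta(G) \ge 2$ one can instead take $v$ to be the second vertex of a longest path of $G$. When a $K_2$-component $\{x,y\}$ arises in $G-N[v]$, I would adjoin $x$ to $S$, accounting for two additional vertices at the cost of one extra seed; this fits the $n/3$ budget whenever $\deg v \ge 3$ (so that $|N[v]| \ge 4$). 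When a $C_5$-component $C$ arises, the connectivity of $G$ supplies an edge from $C$ to $N(v)$, which can be exploited to merge a size-$2$ isolating set of $C$ into the construction with the required slack.

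I expect the main difficulty to be the $C_5$-component case, since $\iota(C_5) = 2 > 5/3$ means that a naive recursion into such a component overshoots the target by one unit; recovering this unit is forced to go through the attachment edge to $N[v]$, and this is precisely where the hypothesis $G \neq C_5$ becomes essential, as otherwise no such attachment would exist. Sharpness of the $n/3$ bound will be exhibited by explicit examples, for instance by two triangles joined by a single bridging edge, where $n=6$ and $\iota=2$.
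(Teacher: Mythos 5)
Your reduction is set up correctly: placing a vertex $v$ of degree at least $2$ into $S$ and recursing on $G-N[v]$ wins whenever every component of $G-N[v]$ is a $K_1$, or has order at least $3$ and is not a $C_5$, so the only obstructions are $K_2$- and $C_5$-components. But the entire difficulty of the theorem is concentrated in the ``structural claim'' that you state and do not prove, and the patches you sketch do not survive scrutiny because $G-N[v]$ may contain \emph{several} exceptional components simultaneously. For example, if $v$ has neighbors $w_1,w_2,w_3$ where $w_3$ is a leaf and each of $w_1,w_2$ carries a pendant path $w_i x_i y_i$ together with the edge $x_iy_i$, then $n=8$ and $G-N[v]$ consists of two $K_2$-components; your patch yields $|S|=1+2=3>8/3$, and the true optimum ($\{w_1,x_2\}$, say) is not of the form ``$v$ plus a repair,'' so no local fix within your framework recovers it. The same problem occurs with several $C_5$-components hanging off $N(v)$: each carries a deficit of $1/3$ against the budget, while the slack from $|N[v]|\ge 3$ (or even $\ge 4$) is a single $1/3$, so the deficits accumulate and the ``attachment edge'' idea, as stated, does not quantify how the lost unit is recovered. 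Choosing $v$ to be a support vertex or the second vertex of a longest path is a reasonable heuristic but is exactly what needs a proof, and as the example shows, the support-vertex choice can itself produce two $K_2$-components.

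The paper's proof closes precisely this gap by a different choice of what to delete: it fixes a BFS tree $T$ rooted at $r$, descends to a vertex $u$ in the penultimate or antepenultimate generation, and removes a small set $U$ (essentially $u$ together with its children and non-escaping grandchildren in $T$) chosen so that $|U|\ge 3$, $U$ is isolated by a single vertex, and --- crucially --- the remainder $G-U$ is \emph{connected}. Connectivity of the remainder means there is at most one component to worry about, and the only exceptional cases ($n(G^*)\le 2$ or $G^*\cong C_5$) are then dispatched by exhibiting an explicit isolating set of size $1$ or $2$ for all of $G$, using the known attachment of $U$ to $G^*$. If you want to salvage your approach, you must either prove that a vertex $v$ exists for which $G-N[v]$ has no exceptional component at all, or replace ``delete $N[v]$'' by the deletion of a set whose complement stays connected; the latter is what the paper does. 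Your base cases and your sharpness example (two triangles joined by an edge, $n=6$, $\iota=2$) are fine.
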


\begin{proof}
We will prove the satement by induction on $n$. Let $G$ be a connected graph on $n \ge 3$ vertices and different from the cycle $C_5$. If $n = 3$, evidently $\iota(G) = 1 = \frac{n}{3}$. So suppose $n > 3$ and assume the statement is valid for all graphs different from $C_5$ and with less than $n$ vertices. Select a vertex $r \in V(G)$ and let $T$ be a BFS-tree of $G$ rooted in $r$. Denote by $L$ the set of leaves of $T$. Let $V_0 = \{r\}$ and let $V_i$ denote the set of vertices of the $i$-th generation after $r$. Let $\ell$ be the last generation of vertices in $T$. So, $V = V_0 \cup V_1 \cup V_2 \cup \ldots \cup V_{\ell}$ and, since $n > 3$, $\ell \ge 1$. 

If $\ell = 1$, then $\{r\}$ is an isolating set of $G$ and the statement holds trivially. Hence, we may assume that $\ell \ge 2$.

Suppose now that, for some $k \le \ell - 1$, there is a vertex $u \in V_k$ such that $N_T(u) \cap V_{k+1} \subseteq L$ and $|N_T(u) \cap V_{k+1}| \ge 2$. Let $U = \{u\} \cup (N_T(u) \cap V_{k+1})$ and $G^* = G - U$. If $U = V(G)$, then clearly $u$ is an isolating set of $G$ and $\iota(G) = 1 \le \frac{n}{3}$ . Hence suppose $V(G) \setminus U  \neq \emptyset$.  Then $G^*$ is connected. If $n(G^*) \le 2$, then $\{u\}$ is an isolating set of $G$ and $\iota(G) = 1 \le \frac{n}{3}$ holds. If $G^* \cong C_5$, let $G^* = x_1x_2x_3x_4x_5x_1$ being $x_1$ the father of $u$ in $T$. Then $\{u, x_3\}$ is an isolating set of $G$ and, as $n \ge 8$, $i(G) \le 2 \le \frac{n}{3}$ is fulfilled. If $G^*$ is different from $C_5$ and $n(G^*) \ge 3$, then, by the induction hypothesis, there is an isolating set $S^*$ of $G^*$ with $|S^*| \le \frac{n-|U|}{3}$. Note that $S^* \cup \{u\}$ is an isolating set of $G$. Hence, since $|U| \ge 3$, we obtain $\iota(G) \le  \frac{n-|U|}{3}+ 1 \le  \frac{n}{3}$ and we are done. So we can assume in the following that, for any $k \le \ell-1$, all vertices $u \in V_k \setminus L$ fulfill either\\
(i) $u$ has only one child $v$ and $v \in L$ or \\
(ii) $u$ has grandchildren. 

Since $\ell \ge 2$, there exists a vertex $u \in V_{\ell-2}$ such that it has grandchildren. Consider now the following cases.

\noindent
{\it Case 1: $u$ has only one child $v$ in $T$.} Since $v \in V_{\ell - 1}$, $v$ has no grandchildren. Hence, by the assumption above, $v$ has only one child, say $w$, which is a leaf in $T$. Let $G^* = G - \{u,v,w\}$.  Then $G^*$ is nonempty and connected. 
We distinguish the following subcases.\\
{\it Subcase 1.1: $n(G^*) \le 2$.} Then $n \le 5$ and it is streightforward to check that, with exception of $G \cong C_5$, $\iota(G) \le \frac{n}{3}$. \\
{\it Subcase 1.2: $G^* \cong C_5$.} Then $n = 8$. Let $G^* = x_1x_2x_3x_4x_5x_1$ being $x_1$ the father of $u$ in $T$. If $x_5 \notin N_G(w)$, then $\{u, x_3\}$ is an isolating set of $G$. Similarly, if $x_2 \notin N_G(w)$, then $\{u, x_4\}$ is an isolating set of $G$. If, on the other side, $x_2$ and $x_5$ are both neighbors of $w$ in $G$, then, depending if $x_4 \notin N_G(u)$ or $x_4 \in N_G(u)$, either $\{w, x_2\}$ or $\{u, x_2\}$ is an isolating set of $G$. Hence, in all cases we obtain an isolating set with $2$ vertices and so $\iota(G) \le 2 \le \frac{n}{3}$.\\
{\it Subcase 1.3: $n(G^*) \ge 3$ and $G^* \neq C_5$.} Then, by induction, there is an isolating set $S^*$ of $G^*$ with $|S^*| \le \frac{n - 3}{3}$. Since $S^* \cup \{v\}$ is an isolating set of $G$, we obtain easily $\iota(G) \le \frac{n - 3}{3} + 1 =  \frac{n}{3} $.

\noindent
{\it Case 2: $u$ has at least two children in $T$.} Let $A$ the set of children of $u$ and $B$ the set of grandchildren of $u$ in $T$ and define $U = \{u\} \cup A \cup B$. Let $B_1 = \{ x \in B \; | \; N_G(x) \cap (V \setminus U) = \emptyset\}$ and $B_2 = B \setminus B_1$. Finally, set $U' = U \setminus B_2$ and let $I$ be the set of isolated vertices in $G[B_1]$.  Now we have two subcases.\\

\noindent
{\it Subcase 2.1: $B_1 \setminus I = \emptyset$.} Then $\{u\}$ is an isolating set of $G[U']$. Now let $G^* = G - U'$. Clearly, $G^*$ is either empty or connected. Namely, if $B_2 = \emptyset$, then $G*$ consists of the vertices from $T - U'$ which is a tree with a leaf in the father of $u$. On the other side, if $B_2 \neq \emptyset$, then by definition all its vertices are adjacent to some vertex of the tree $T - U$, which is also a tree with a leaf in the father of $u$.  Again, we consider here three different subcases.\\
{\it (i) $n(G^*) \le 2$.} If $n(G^*) = 0$, then $B_2 = \emptyset$ and $\{u\}$ is a separating set of $G$. If $n(G^*) = 1$, then, as the vertices of $B_2$ have all a neighbor in $V(G) - U$, it forces again $B_2 = \emptyset$. Hence, again, $\{u\}$ is an isolating set of $G$. Thus, in both cases we have $\iota(G) = 1 \le \frac{n}{3}$. If $n(G^*) = 2$ let $V(G^*) = \{x, y\}$, being $x$ the father of $u$ in $T$. If $N_G(y) \cap B_1 = \emptyset$, then $B_1 \cup \{y\}$ is an independent set and, since $u$ dominates $A \cup \{x\}$, $\{u\}$ is an isolating set of $G$ and thus $\iota(G) = 1 \le \frac{n}{3}$. Finally, suppose that $N_G(y) \cap B_1 \neq \emptyset$. Then, $|B_1| \ge 1$ and, with $|A| \ge 2$ and $n(G^*) \ge 2$, we have $n \ge 6$. As clearly $\{u, x\}$ is an isolating set of $G$, it follows again $\iota(G) \le 2 \le \frac{n}{3}$. \\
{\it (ii) $G^* \cong C_5$.} Again, let $G^* = x_1x_2x_3x_4x_5x_1$ being $x_1$ the father of $u$ in $T$. Since the vertices in $B_1 = I$ form an independent set and have no neighbors in $G^*$, it is not difficult to see that $\{u, x_3\}$ is an isolating set of $G$ (the set $A \cup \{x_1, x_2, x_4\}$ is dominated and te rest $B_1 \cup \{x_5\}$ is independent). Hence, as $n \ge 8$, $\iota(G) \le 2 \le \frac{n}{3}$.\\
{\it (iii) $n(G^*) \ge 3$ and $G^* \neq C_5$.} Let $S^*$ be a minimum isolating set of $G^*$. By the induction hypothesis, $|S^*| \le \frac{n - |U'|}{3}$. Moreover, since $N_G(B_1) \cap V(G^*) = \emptyset$ and $B_1$ is an independent set in $G$, $S^* \cup \{u\}$ is an isolating set of $G$. Hence, $\iota(G) \le |S^* \cup \{u\}| \le \frac{n - |U'|}{3} + 1 \le \frac{n}{3} $ and we are done. 

\noindent
{\it Subcase 2.2: $B_1 \setminus I \neq \emptyset$.} Then $\delta(G[B_1 \setminus I]) \ge 1$ and $|B_1 \setminus I| \ge 2$. Let $y , z\in B_1$ be two adjacent vertices in $G$ and let $x \in A$ be the father of $y$ in $T$. Define $G^* = G - \{x,y,z\}$. Note that, by assumption and since $x \in V_{\ell-1}$, $x$ can have only one child, which is $y$. This implies that $G$ is connected and $n(G^*) \ge 2$. For the last, we distinguish the following three subcases.\\
{\it (i) $n(G^*)=2$.} Since $u$ has at least two children in $T$ and $n(G^*) =2$, $|A| = 2$, say, $A = \{x,v\}$. Then, clearly, $G^* = uv$. As $z$ has to have a father in $A$ different from $x$, it follows that $G$ is either the $5$-cycle $C = uxyzvu$ or the graph $C+vx$. Since by hypothesis $G \neq C$, it follows that $G = C+vx$. Thus, $\{x\}$ is an isolating set and $\iota(G) = 1 \le \frac{n}{3}$. \\
{\it (ii) $G^* \cong C_5$.} Then $n = 8$. Let $G^* = uu_1u_2u_3u_4u$ and assume, without loss of generality that $u_1 \in A$ is the father of $z$ in $T$. Recall that, in $G$, none of the vertices of $V(G) \setminus U$ is adjacent to the vertices in $B_1$. In particular, $u_3$ and $y$ are not adjacent. Hence, it follows that $\{u, u_1\}$ is an isolating set of $G$, implying that $\iota(G) \le 2 \le \frac{n}{3}$. \\
{\it (iii) $n(G^*) \ge 3$ and $G \neq C_5$.} Then, by the induction hypothesis, there is an isolating set $S^*$ of $G^*$ with $|S^*| \le \frac{n-3}{3} $. Since $S^* \cup \{y\}$ is an isolating set of $G$, we obtain again
$\iota(G) \le |S^*| + 1 \le \frac{n-3}{3} + 1 = \frac{n}{3}$ and we are done.

For the sharpness, consider the following graphs.\\
\noindent
(i) Let $G_1$ be the graph consisting of an arbitrary connected graph $H_1$ on $n_1$ vertices, each of which is attached to a $K_2$ by means of an edge. Then $\delta(G_1) = 1$ and $\iota(G_1) = n_1 = \frac{n(G_1)}{3}$.\\
(ii) Let $G_2$ be the graph consisting of an arbitrary connected graph $H_2$ on $n_2$ vertices, each of which is attached to a $K_2$ by means of two edges (these edges going each to a different vertex of the $K_2$). Then $\delta(G_2) = 2$ and $\iota(G_2) = n_2 = \frac{n(G_2)}{3}$.
\end{proof}

\begin{cor}\label{cor-n/3-1}
Let $G$ be a graph on $n$ vertices with no component on less than $3$ vertices. Then the following holds.
\begin{enumerate}
\item[(i)] $\iota(G) \le \frac{2n}{5}$ and equality holds if and only if $G$ is the union of vertex disjoint copies of $C_5$.
\item[(ii)] Let $G_1, G_2, \ldots, G_k$ be the components of $G$. If $G_i \neq C_5$ for all $1 \le i \le k$, then $\iota(G) \le \displaystyle \sum_{1 \le i \le k} \left \lfloor \frac{n(G_i)}{3}\right \rfloor \le \left \lfloor \frac{n}{3}\right \rfloor$.
\end{enumerate}
\end{cor}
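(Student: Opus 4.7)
The overall plan is to apply Theorem~\ref{thm-n/3} to each connected component separately and sum up. First I would record the elementary fact that the isolation number is additive over connected components: if $G_1,\ldots,G_k$ are the components of $G$ and $S_i$ is a minimum isolating set of $G_i$, then $\bigcup_i S_i$ is an isolating set of $G$ (its non-dominated vertices split according to the components, and each piece is independent), while restricting a minimum isolating set of $G$ to each $V(G_i)$ yields an isolating set of $G_i$. Hence $\iota(G)=\sum_{i=1}^{k}\iota(G_i)$. This additivity is the only structural ingredient beyond Theorem~\ref{thm-n/3}.

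For part~(ii), each $G_i$ is a connected graph on at least $3$ vertices with $G_i\neq C_5$, so Theorem~\ref{thm-n/3} gives $\iota(G_i)\le n(G_i)/3$; since $\iota(G_i)$ is an integer, this strengthens to $\iota(G_i)\le \lfloor n(G_i)/3\rfloor$. Summing and using that an integer that is at most a real $r$ is at most $\lfloor r\rfloor$, I obtain $\iota(G)=\sum_i\iota(G_i)\le\sum_i\lfloor n(G_i)/3\rfloor\le\lfloor n/3\rfloor$.

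For part~(i), I would split the components according to whether $G_i\cong C_5$ or not. If $G_i\cong C_5$, then $\iota(G_i)=\lceil 5/4\rceil=2=\tfrac{2}{5}n(G_i)$, so the bound $\iota(G_i)\le\tfrac{2}{5}n(G_i)$ holds with equality. If $G_i\neq C_5$, then $n(G_i)\ge 3$ and Theorem~\ref{thm-n/3} gives $\iota(G_i)\le n(G_i)/3$, which is strictly smaller than $\tfrac{2}{5}n(G_i)$ because $1/3<2/5$ and $n(G_i)>0$. Summing over components yields $\iota(G)\le \tfrac{2}{5}n$, and the inequality is strict unless every component is a $C_5$; in that case equality holds component-wise and therefore globally, giving the characterization of the extremal graphs.

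Since Theorem~\ref{thm-n/3} is already in hand and the additivity is trivial, there is essentially no obstacle here; the only point to be careful about is the floor manipulation in~(ii), which is handled by the observation that the left-hand side is always an integer.
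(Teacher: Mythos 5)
Your argument is correct and is exactly the intended one: the paper states this as an immediate corollary of Theorem~\ref{thm-n/3} (with no written proof), relying on the additivity of $\iota$ over components and the fact that $\iota(C_5)=2=\tfrac{2}{5}n(C_5)$ while $\tfrac{1}{3}<\tfrac{2}{5}$ handles the equality characterization. Your explicit verification of additivity and the integrality step in (ii) fill in precisely the details the paper leaves implicit.
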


\subsection{Bounds in terms of order and maximum degree}

\begin{thm}\label{thm-S}
Let $G$ be a graph of order $n$ and with vertex set $V$. Consider a subset $S \subseteq V$. Then the following statements hold.
\begin{enumerate}
\item[(i)] $\iota(G) \le  \frac{n - |N(S)| + |S|}{2}$.
\item[(ii)] If $\delta(G - N[S]) \ge 2$, then $\iota(G) \le \frac{2n - 2|N(S)| + 3|S|}{5}$.
\item[(iii)] If every component of  $G - N[S]$ has at least $3$ vertices and no $C_5$-component,  then $ \iota(G) \le  \frac{n - |N(S)| +2|S|}{3}$.  
\end{enumerate}
\end{thm}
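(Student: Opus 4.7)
The plan is to exploit the simple observation that an isolating set of the residual graph $G - N[S]$ can be combined with $S$ itself. Set $G' := G - N[S]$, so $n(G') = n - |N(S)| - |S|$ (recall that by definition $N(S) \cap S = \emptyset$). If $D$ is any isolating set of $G'$, then I claim $S \cup D$ is an isolating set of $G$. Indeed, since $D \subseteq V(G') = V \setminus N_G[S]$, one checks that
\[
V \setminus N_G[S \cup D] \;=\; (V \setminus N_G[S]) \setminus N_G[D] \;=\; V(G') \setminus N_{G'}[D],
\]
and the right-hand side is independent in $G'$ (hence in $G$) by the choice of $D$. Taking $D$ to be minimum yields the master inequality
\[
\iota(G) \;\le\; |S| + \iota(G - N[S]).
\]

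From here the three bounds all follow by feeding three different upper bounds on $\iota(G')$ into this inequality. For (i), Corollary~\ref{cor-bounds}~(iii) with $k=0$ gives $\iota(G') \le n(G')/2$; substituting $n(G') = n - |N(S)| - |S|$ and simplifying produces $(n - |N(S)| + |S|)/2$. For (ii), the hypothesis $\delta(G') \ge 2$ forbids components of order $1$ or $2$ in $G'$, so Corollary~\ref{cor-n/3-1}~(i) applies and gives $\iota(G') \le 2 n(G')/5$; adding $|S|$ and clearing denominators yields the claimed $(2n - 2|N(S)| + 3|S|)/5$. For (iii), the hypotheses are exactly those of Corollary~\ref{cor-n/3-1}~(ii), yielding $\iota(G') \le \lfloor n(G')/3 \rfloor \le n(G')/3$, from which the stated bound follows.

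There is no genuine obstacle in this proof: the only thing that needs care is the neighborhood computation verifying that $S \cup D$ isolates $G$, which is a routine set-theoretic manipulation using $D \subseteq V \setminus N_G[S]$. The real content of the theorem lies upstream, in Theorem~\ref{thm-n/3} and its Corollary~\ref{cor-n/3-1}, whose structural information about connected and $C_5$-free components is precisely what powers parts (ii) and (iii).
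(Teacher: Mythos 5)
Your proof is correct and follows essentially the same route as the paper: the master inequality $\iota(G) \le |S| + \iota(G - N[S])$ combined with the $n/2$, $2n/5$, and $n/3$ bounds on the residual graph. The only difference is cosmetic — you spell out the verification that $S \cup D$ isolates $G$ (which the paper takes for granted) and you cite Corollary~\ref{cor-n/3-1}~(ii) for part~(iii) where the paper cites Theorem~\ref{thm-n/3} directly, which is in fact the more precise reference since $G - N[S]$ may be disconnected.
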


\begin{proof}
(i) This follows because of $\iota(G) \le \iota(G -N[S]) +|S| \le  \frac{n - |N[S]|}{2} + |S| =  \frac{n - |N(S)| + |S|}{2}$.\\
(ii) If $\delta(G - N[S]) \ge 2$, then there is no component of order smaller than $3$ and, hence, by Corollary \ref{cor-n/3-1}(i), we infer that $\iota(G) \le \iota(G -N[S]) +|S| \le  \frac{2(n - |N[S])|}{5} + |S| =  \frac{2n - 2|N(S)| + 3|S|}{5}$.\\
(iii) Since  every component of  $G - N[S]$ has at least $3$ vertices and no $C_5$-component, it follows by Theorem \ref{thm-n/3} $\iota(G) \le \iota(G -N[S]) +|S| \le \frac{n- |N[S]|}{3} + |S| =  \frac{n - |N(S)| +2|S|}{3}$.
\end{proof}

\begin{cor}\label{cor-n/3-2}
Let $G$ be a graph on $n$ vertices and maximum degree $\Delta$. 
\begin{enumerate}
\item[(i)] Then $\iota(G) \le \frac{n - \Delta + 1}{2}$ and this is sharp for various values of $\Delta \ge \frac{n}{2}$.
 \item[(ii)] If there is a vertex $v$ of maximum degree $\Delta$ and all components of $G - N[v]$ contain at least $3$ vertices and no $C_5$-component,  then $\iota(G)\le \frac{n -\Delta +2}{3}$.  
 \item[(iii)] If $G$ is a graph of maximum degree $\Delta$, then $\iota_1(G) \le \frac{n-\Delta+2}{3}$ and this is sharp.
\end{enumerate}
\end{cor}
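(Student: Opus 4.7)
The overall approach for all three parts is to apply an earlier result from this section to the singleton set $S = \{v\}$, where $v$ is a vertex of maximum degree $\Delta$. Parts (i) and (ii) are one-line instantiations of Theorem~\ref{thm-S}(i) and (iii) respectively; part (iii) requires a small additional verification that I discuss below.

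For (i), I would feed $|N(S)| = \Delta$ and $|S| = 1$ into Theorem~\ref{thm-S}(i) to obtain $\iota(G) \le (n - \Delta + 1)/2$ at once. To demonstrate sharpness for certain values of $\Delta \ge n/2$, I would take $G = K_{1,\Delta} \cup M$ where $M$ is a perfect matching on $n - \Delta - 1$ vertices (valid whenever $n - \Delta - 1$ is even, and with $\Delta \ge n/2$ arranged by parameter choice): every edge of $M$ contributes at least $1$ to an isolating set, and at least one further vertex is needed to handle the star (otherwise the center sits in $R$ together with at least one of its leaves, giving a $K_2$ in $R$), yielding the matching lower bound $(n - \Delta + 1)/2$. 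For (ii), the same $S = \{v\}$ plugged into Theorem~\ref{thm-S}(iii) gives $\iota(G) \le (n - \Delta + 2)/3$, since the hypothesis on components of $G - N[v]$ is exactly the one required there.

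For (iii), the plan is to set $G^* = G - N[v]$, a graph on $n - \Delta - 1$ vertices, and apply Corollary~\ref{cor-bounds}(iii) with $k = 1$ to $G^*$ in order to obtain a $K_{1,2}$-isolating set $S^*$ of $G^*$ with $|S^*| \le (n - \Delta - 1)/3$. The main (and only) obstacle is to verify that $S := \{v\} \cup S^*$ is a $K_{1,2}$-isolating set of $G$ itself. For this, observe that $R_G(S) \subseteq V(G^*) \setminus N_{G^*}[S^*]$, and because $G^*$ is an induced subgraph of $G$, the induced subgraph $G[R_G(S)]$ coincides with $G^*[R_G(S)]$, whose maximum degree is at most $1$ by construction. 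Therefore $\iota_1(G) \le |S| \le (n - \Delta + 2)/3$. For sharpness, I would use the disjoint union $G = K_{1,\Delta} \cup t K_{1,2}$ with $\Delta \ge 2$ and $t \ge 0$: it has $n = \Delta + 1 + 3t$, maximum degree $\Delta$, and $\iota_1(G) = 1 + t = (n - \Delta + 2)/3$, providing equality for infinitely many pairs $(n, \Delta)$.
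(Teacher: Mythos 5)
Your proposal is correct. For parts (i) and (ii) it matches the paper: (ii) is exactly the instantiation of Theorem~\ref{thm-S}(iii) at $S=\{v\}$, and for (i) the paper essentially re-derives Theorem~\ref{thm-S}(i) for a single vertex of maximum degree (discard the isolated vertices of $G-N[v]$ and apply Ore's bound $\gamma\le n/2$ to the rest), so your one-line citation carries the same content; your sharpness example $K_{1,\Delta}\cup M$ differs from the paper's connected one (a star $K_{1,2p+q}$ with $2p$ subdivided edges whose new leaves are joined in pairs) but is equally valid and your lower-bound count is right. The genuine divergence is in (iii): the paper cannot apply Theorem~\ref{thm-n/3} directly to $G^*=G-N[v]$ because of possible components of order at most $2$ and $C_5$-components, so it splits $V(G^*)$ into the set $A$ of vertices in small components (left undominated, maximum degree at most $1$), the set $B$ of vertices in $C_5$-components (one vertex per $C_5$, leaving a $K_2$), and the rest (Theorem~\ref{thm-n/3}), and then balances $1+\tfrac{|B|}{5}+\tfrac{n(G^*)-|A|-|B|}{3}\le 1+\tfrac{n(G^*)}{3}$. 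You instead invoke Corollary~\ref{cor-bounds}(iii) with $k=1$, which holds for \emph{every} graph with no exceptional cases, and your verification that $\{v\}\cup S^*$ is a $1$-isolating set of $G$ (via $R_G(S)\subseteq V(G^*)\setminus N_{G^*}[S^*]$ and $G^*$ being induced) is exactly the point that needs checking. Your route is shorter and avoids the case analysis entirely; the paper's version additionally shows the undominated set can be kept independent outside $A$ and the $C_5$-remainders, but that is not needed for the stated bound. Your sharpness family $K_{1,\Delta}\cup tK_{1,2}$ realizes equality for every $\Delta\ge 2$, whereas the paper's $rK_3\cup sP_3\cup tC_6$ only exhibits it for $\Delta=2$.
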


\begin{proof}
(i) Let $x$ be a vertex of maximum degree $\Delta$. Let $G' = G - N_G[x]$ and let $I$ the set of isolated vertices in $G'$. Then there is a dominating set $D$ of $G'-I$ with $|D| \le \frac{n(G')-|I|}{2} \le \frac{n(G')}{2} = \frac{n - \Delta -1}{2}$. Since $D \cup \{x\}$ is an isolating set of $G$, we obtain $\iota(G) \le |D|+1 \le \frac{n - \Delta +1}{2}$. The sharpness can be seen with the following construction. For two positive integers $p$ and $q$ let $x$ be the center of a star $K_{1,2p+q}$, from which exactly $2p$ edges are subdivided. Now join by pairs the leaves of the subdivided edges with an edge. The graph $G$ obtained this way has $n(G) = n$ and $\iota(G) = p +1 = \frac{n- (2p+q)+1}{2}$.\\
(ii) This follows directly from Theorem \ref{thm-S}~(iii) with $S = \{v\}$.\\
(iii) Let $v$ be a vertex of maximum degree $\Delta$. Consider the graph $G^* = G - N[v]$. Let $A \subseteq V(G^*)$ be the set of vertices of all components of at most $2$ vertices in $G^*$. Let $B$ be the set of vertices of all $C_5$-components of $G^*$. Let $I$ be a minimum $\{K_2\}$-isolating set of $G[B]$. Then clearly $|I| = \frac{|B|}{5}$. Finally, let $J$ be a minimum isolating set of $G^* - (A \cup B)$ (if $V(G^*) \setminus (A \cup B) = \emptyset$, set $J = \emptyset$). Note that $\{v\} \cup I \cup J$ is a $\{K_2\}$-isolating set of $G$ and,  by Theorem \ref{thm-n/3}, $|J| \le \frac{n(G^*) - |A| - |B|}{3}$. Hence, we obtain 
\begin{align*}
\iota_1(G) &\;\le\; 1 + |I| + |J| 
                  \;\le\; 1 + \frac{|B|}{5} + \frac{n(G^*) - |A| - |B|}{3} \\
                  &\;\le\; 1 + \frac{|B|}{3} + \frac{n(G^*) - |B|}{3} 
                  \;=\; 1+ \frac{|V(G) - N[v]|}{3}\; = \;\frac{n - \Delta +2}{3}.
\end{align*}
Let $r, s, t$ be non-negative integers with $r+s+t \ge 1$. Then the graph $F_{r,s,t} = rK_3 \cup sP_3 \cup tC_6$ has $\iota_1(F_{r,s,t}) = r + s + 2t = \frac{n(F_{r,s,t})}{3}$ and thus the bound is sharp.
\end{proof}

\subsection{Bounds in terms of order and minimum degree}

Denote by $f(\delta, k) = \inf \{\alpha \;|\; \iota_k(G) \le \alpha |G| \mbox{ for every graph } $G$ \mbox{ with minimum degree } \delta \}$. In case $k = 0$ we shall use the notation $f(\delta)$.

 

\begin{thm}\label{thm:up-bound-min-deg}
The following statements hold.
\begin{enumerate} 
\item[(i)] For $\delta \ge 1$, $\frac{2}{\delta+3} \le f(\delta) \le \frac{\ln(\delta+1)+ \frac{1}{2}}{\delta+1}$.
\item[(ii)] $f(1) = \frac{1}{2}$, $f(2) = \frac{2}{5}$ and $f(3) = \frac{1}{3}$ and this is sharp.
\item[(iii)] 
For $\delta \ge k+1$,
\[
(1 - o(1)) \frac{\ln(\delta+1)}{(k+2) (\delta+1)} \le f(\delta,k) \le \frac{\ln(\delta+\frac{1}{2}) + 1}{\delta+1}.
\]
\end{enumerate}
\end{thm}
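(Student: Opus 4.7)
For the upper bound in (i), I plan the classical Payan--Arnautov random set construction, enhanced by a matching repair step that exploits the gap between isolation and domination. Select each vertex of $G$ independently with probability $p = \ln(\delta+1)/(\delta+1)$ to form a random set $T$, so $\mathbb{E}[|T|] = pn$. Writing $Y = V \setminus N[T]$, the standard estimate $\Pr[v \notin N[T]] \le (1-p)^{\delta+1} \le e^{-p(\delta+1)} = 1/(\delta+1)$ gives $\mathbb{E}[|Y|] \le n/(\delta+1)$. Next take a maximal matching $M$ in $G[Y]$, pick one endpoint of each edge of $M$, and let $S$ be the union of $T$ with these endpoints. Since $Y \setminus V(M)$ is independent in $G$ by maximality of $M$ and $V \setminus N[S] \subseteq Y \setminus V(M)$, the set $S$ is isolating. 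The estimate $|S| \le |T| + |M| \le |T| + |Y|/2$ together with linearity of expectation then yields $\iota(G) \le pn + n/(2(\delta+1)) = n(\ln(\delta+1)+\tfrac{1}{2})/(\delta+1)$. The saving of $1/2$ over the classical constant $1$ in the Arnautov--Payan domination bound is precisely the benefit of using a matching instead of fully dominating the remainder.

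For the lower bound in (i) and the sharpness in (ii), I would use the family $H_\delta = \overline{C_{\delta+3}}$ for $\delta \ge 2$. This graph is $\delta$-regular, and one checks directly that $\iota(H_\delta) = 2$: removing the closed $H_\delta$-neighborhood of a single vertex leaves its two cycle-neighbors, which are adjacent in $\overline{C_{\delta+3}}$ whenever $\delta+3 \ge 5$, while two well-chosen vertices suffice to dominate $V(H_\delta)$. Disjoint copies of $H_\delta$ then give $f(\delta) \ge 2/(\delta+3)$; for $\delta=1$ one uses disjoint $K_2$'s. The three upper bounds in (ii) come from previous results: $\iota(G) \le \gamma(G) \le n/2$ (Ore's theorem, since $\delta \ge 1$) gives $f(1) \le 1/2$; Corollary~\ref{cor-n/3-1}(i) gives $f(2) \le 2/5$ since $\delta \ge 2$ forbids components of order less than $3$; and Theorem~\ref{thm-n/3} applied componentwise gives $f(3) \le 1/3$, because $\delta \ge 3$ additionally forbids $C_5$-components. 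Sharpness is realized by disjoint copies of $K_2$, $C_5$ and $K_3 \times K_2$, respectively.

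For part (iii), the same probabilistic skeleton handles the upper bound, with the matching step replaced by a $k$-independent set step: given $T$ and $Y = V \setminus N[T]$, choose a large $k$-independent subset $Y^* \subseteq Y$ via a Caro--Wei type estimate roughly of the form $\alpha_k(G[Y]) \ge (k+1)|Y|/(\Delta(G[Y])+1)$, and set $S = T \cup (Y \setminus Y^*)$. Optimizing $p$ to balance $pn$ against the corrected remainder yields the constant $(\ln(\delta+\tfrac{1}{2})+1)/(\delta+1)$. The lower bound in (iii) is the main obstacle, and I will attack it with the probabilistic method on random $\delta$-regular graphs via the configuration model. For a fixed candidate set $S$ of size $s$, the probability that $G[V \setminus N[S]]$ has maximum degree at most $k$ decays like $\exp(-\Theta(s(\delta+1)/(k+2)))$, the $(k+2)$ factor reflecting that in the $k$-isolation regime each element of $S$ effectively covers $k+2$ vertices rather than one. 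A union bound over the $\binom{n}{s}$ candidate sets shows that with positive probability no set of size $s < (1-o(1)) n \ln(\delta+1)/((k+2)(\delta+1))$ is $k$-isolating in such a graph. The delicate point is controlling the joint distribution of degrees in $G[V \setminus N[S]]$ sharply enough to justify the exact constant $k+2$; a careful second-moment analysis within the configuration model should suffice.
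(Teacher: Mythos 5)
Your treatment of parts (i) and (ii) is correct and essentially coincides with the paper's. Your lower-bound family $\overline{C_{\delta+3}}$ is literally the paper's construction ($K_r$ minus a Hamiltonian cycle with $r=\delta+3$), and your upper bound in (i) is the same Alon-style random-set argument; the only difference is that you repair the remainder $Y$ with one endpoint of each edge of a maximal matching, where the paper discards the isolated vertices of $G[Y]$ and applies Ore's theorem to dominate the rest --- both yield the same $|Y|/2$ term. In (ii) your route to $f(2)\le \frac25$ via Corollary~\ref{cor-n/3-1}(i) is a legitimate (and slightly cleaner) alternative to the paper's use of the McCuaig--Shepherd bound $\gamma(G)\le \frac25 n$ together with a check of its seven exceptional graphs; your sharpness examples $K_2$, $C_5$, $K_3\times K_2\cong\overline{C_6}$ match the paper's.

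Part (iii) is where your proposal has a genuine gap, on both sides. For the lower bound, the paper does not touch random regular graphs at all: it takes an Alon--Wormald graph $G$ with $\gamma(G)=(1-o(1))\frac{\ln\delta}{\delta}n(G)$ and minimum degree $\delta-1$, forms $H=G\times K_{k+2}$, and invokes Theorem~\ref{thm-dom}(i) to get $\iota_k(H)\ge\gamma(G)$, which immediately yields the factor $\frac{1}{k+2}$ from $n(H)=(k+2)n(G)$. Your configuration-model plan rests on two unproved quantitative assertions --- that $\Pr[\Delta(G[V\setminus N[S]])\le k]$ decays like $\exp(-\Theta(s(\delta+1)/(k+2)))$, and that this beats the $\binom{n}{s}$ union bound at the claimed threshold --- and you yourself flag the ``exact constant $k+2$'' as an open delicate point; as written this is a research programme, not a proof, and it is not evident that the events involved (degrees of surviving vertices into the surviving set) are concentrated sharply enough for the first-moment computation to give the stated constant. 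For the upper bound, your Caro--Wei step is also not carried through: the estimate $\alpha_k(G[Y])\ge (k+1)|Y|/(\Delta(G[Y])+1)$ is useless when $\Delta(G[Y])$ is large, and you do not show how the optimization produces $\frac{\ln(\delta+\frac12)+1}{\delta+1}$. The paper's upper bound is much blunter: $\iota_k(G)\le\gamma(G)$ by Lemma~\ref{la-dom-sum}(i) combined with the Arnautov--Lov\'asz--Payan bound $\gamma(G)\le\frac{\ln(\delta+1)+1}{\delta+1}n$ (note the paper's proof actually delivers $\ln(\delta+1)+1$ in the numerator rather than the $\ln(\delta+\frac12)+1$ displayed in the statement, which appears to be a typo there, but in any case no $k$-independent-set refinement is needed or used).
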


\begin{proof}
(i) We first prove the lower bound. Here for, consider the following graph. Let $n$ be divisible by an integer $r \ge 4$ and let $H$ be a complete graph $K_r$ of order $r$ to which the edges of a Hamiltonian cycle are deleted. Let $G$ be the graph consisting of $\frac{n}{r}$ copies of $H$. Then $\delta = \delta(G) = r-3$ and each vertex dominates all vertices on the copy of $H$ it belongs with exception of two adjacent vertices, showing that $\iota(G) = 2\frac{n}{r}  = \frac{2}{\delta+3}n$.\\
For the upper bound, we follow the proof for the probabilistic upper bound for the domination number due to Alon (see \cite{AlSp}) but, instead of including all non-dominated vertices, we only need to take at most the half of those which are not isolated. So let $G$ be a graph with minimum degree $\delta \ge 1$. Let $p \in [0,1]$. Select a set of vertices $A$ independently at random such that $P(v \in A) = p$. Let $I$ be the set of isolated vertices in $V \setminus A$ and let $B = V \setminus (N[A] \cup I)$. Since there are no isolated vertices in $B$, we know by Ore \cite{Ore}, that there is a dominating set $D$ of $G[B]$ such that $|D| \le \frac{|B|}{2}$. Then, clearly, $A \cup D$ is an isolating set of $G$. Note that $E[|D|] \le E[\frac{|B|}{2}] = \frac{1}{2} E[|B|].$
Hence, since \[P(v \in B) = P(v \in V\setminus N[A]) = (1-p)^{\deg(v)+1} \le (1-p)^{\delta+1},\]
we obtain, using $1-x \le e^{-x}$ for $x \ge 0$,
\[E[|A \cup D|] 
         \le E[|A|] + \frac{1}{2}E[|B|] = pn + \frac{1}{2} (1-p)^{\delta+1}n
         \le \left( p + \frac{1}{2}e^{-p(\delta+1)}\right)n.\]
Since the function $f(x) = e^{-x(\delta+1)}$ attains its minimum when $x = \frac{\ln(\delta+1)}{\delta+1}$, we can take $p = \frac{\ln(\delta+1)}{\delta+1}$ in order to obtain the minimum value in the above inequality chain. Hence, the expected cardinality for an isolating set is at most
\[\frac{\ln(\delta+1)+\frac{1}{2}}{\delta+1}\;n,\]
which gives the desired upper bound for $\iota(G)$.\\
(ii) The inequalities $f(1) \ge \frac{1}{2}$, $f(2) \ge \frac{2}{5}$ and $f(3) \ge \frac{1}{3}$ follow from item (i). Let now $G$ be a graph of order $n$. If $G$ has no isolated vertices, then $\gamma(G) \le \frac{n}{2}$ \cite{Ore}, and hence by Lemma \ref{la-dom-sum}~(i) $\iota(G) \le \frac{n}{2}$. If $\delta(G) \ge 2$, it is well known that $\gamma(G) \le \frac{2}{5}n$ unless $G$ belongs to a family of $7$ exceptional graphs ($P_4$ and six graphs of order $7$) \cite{McCShe}. For $P_4$ we have $\iota(P_4)=1$ and for the other $6$ exceptional graphs on $7$ vertices we checked that there is an isolating set on $2 = \frac{2}{7}n$ vertices. Hence, using Lemma \ref{la-dom-sum}~(i) for all other graphs $G$ with minimum degree $\delta \ge 2$, we obtain in all cases $\iota(G) \le \frac{2}{5}n$. Finally, if $G$ has minimum degree $\delta \ge 3$, then all components of $G$ have at least $4$ vertices and none of them is a $C_5$ and hence Theorem \ref{thm-n/3} yields $\iota(G) \le \frac{1}{3}n$.\\
(iii) For the lower bound, consider a graph $G$ of minimum degree $\delta(G) = \delta-1 \ge 0$ such that $\gamma(G) = (1 - o(1)) \frac{\ln \delta}{\delta} n(G)$, whose existence was given by Alon and Wormald in \cite{AlWo}.  Now let $H = G \times K_{k+2}$. Then $n(H) = (k+2) n(G)$ and $\delta(H) = \delta(G) + k+1 \ge k+1$. By Theorem \ref{thm-dom}~(i), it follows
\begin{align*}
\iota_k(H) \ge \gamma(G) &= (1 - o(1)) \frac{\ln \delta}{\delta} n(G)\\
                &= (1 - o(1)) \frac{(\delta+1)\ln \delta}{ \delta \ln(\delta+1)} \cdot \frac{\ln(\delta+1)}{(k+2) (\delta+1)} \cdot (k+2) n(G)\\
                &\ge (1 - o(1))\frac{\ln(\delta+1)}{(k+2) (\delta+1)} n(H),
\end{align*}
and we are done.
For the upper bound, let $G$ be any graph on $n$ vertices and with minimum degree $\delta \ge k+1$. By Lemma \ref{la-dom-sum}~(i), we have $\iota_k(G) \le \gamma(G)$. Hence, by the bound on domination due to Aranutov, Lov\'asz and Payan \cite{Arn,Lov,Pay} (for which Alon gave the probabilistic proof cited above in the proof of item (iii)), $\iota_k(G) \le \gamma(G) \le \frac{\ln(\delta+1)+1}{\delta+1} n$.
\end{proof}

Note that the lower bounds in items (i) and (iii) of Theorem \ref{thm:up-bound-min-deg} are both in order as the first gives a better (and explicit) lower bound for small values of $\delta$.

\begin{thm}
Let $G$ be a bipartite graph on $n$ vertices and minimum degree $\delta$. Then $\iota(G) \le \frac{\ln \delta+1}{2\delta}n$ and this is nearly sharp.
\end{thm}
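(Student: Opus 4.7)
The plan is to adapt the Alon-style probabilistic argument used in the proof of Theorem~\ref{thm:up-bound-min-deg}(i) by exploiting the bipartite structure. Let $X\cup Y$ be a bipartition of $G$ with $|X|\le|Y|$. The key structural observation is that any $S\subseteq V(G)$ satisfying $Y\subseteq N[S]$ automatically has $V(G)\setminus N[S]\subseteq X$, which is independent; hence $S$ is then an isolating set. So instead of dominating \emph{all} of $V(G)$ as in the proof of the general bound, it suffices to dominate only the $Y$-side, which is what saves roughly a factor of $2$.

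First I would treat the main case $\delta\ge 3$. Pick $A\subseteq X$ by including each vertex of $X$ independently with probability $p=\frac{\ln\delta}{\delta}$, and let $B=Y\setminus N(A)$. Then $S=A\cup B$ dominates $Y$, so $S$ is an $\{K_2\}$-isolating set of $G$. Using $1-x\le e^{-x}$,
\[
E[|S|]\;\le\; p|X|+|Y|(1-p)^\delta\;\le\; p|X|+|Y|e^{-p\delta}\;=\;\frac{|X|\ln\delta+|Y|}{\delta}.
\]
Now write $|X|\ln\delta+|Y|=n+|X|(\ln\delta-1)$. Because $\ln\delta-1\ge 0$ for $\delta\ge 3$, this expression is maximized subject to $|X|+|Y|=n$ and $|X|\le n/2$ at $|X|=n/2$, giving the value $\frac{n(\ln\delta+1)}{2}$. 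Hence $E[|S|]\le\frac{\ln\delta+1}{2\delta}n$, so a witnessing isolating set of the required size exists.

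Next I would dispose of $\delta=1,2$ using existing results, since the probabilistic optimization fails there. For $\delta=1$, $G$ has no isolated vertex, so Ore's bound gives $\iota(G)\le\gamma(G)\le n/2=\frac{\ln 1+1}{2\cdot 1}n$. For $\delta=2$, every component of $G$ has at least three vertices, so Corollary~\ref{cor-n/3-1}(i) yields $\iota(G)\le 2n/5$, which is strictly smaller than $\frac{\ln 2+1}{4}n\approx 0.423\,n$. This covers all $\delta\ge 1$.

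For the near-sharpness claim, I would point to a bipartite variant of the Alon--Wormald construction (e.g.\ a suitable bipartite double cover of their graphs, or random bipartite $\delta$-regular graphs), which produces bipartite graphs with minimum degree $\delta$ whose domination number, and hence $\iota$, is $\Omega\!\left(\frac{\ln\delta}{\delta}n\right)$, matching the upper bound up to a constant factor in the leading term. The main obstacle is the optimization step: getting the exact leading constant $1/2$ requires that $|X|\ln\delta+|Y|$ be maximized at the balanced bipartition, which only holds when $\ln\delta\ge 1$, forcing the separate treatment of $\delta\le 2$ via previously established results.
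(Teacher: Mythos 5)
Your upper-bound argument is essentially the paper's own proof: choose $A$ in the smaller side of the bipartition at random, let $B$ be the undominated vertices of the larger side, and use that the leftover vertices lie in one independent side. In fact you are more careful than the paper at the final optimization step, since you correctly observe that the balanced bipartition is the worst case only when $\ln\delta\ge 1$ and you dispose of $\delta\in\{1,2\}$ via Ore's bound and Corollary~\ref{cor-n/3-1}(i); the paper simply asserts that $n_1=n_2=n/2$ is the worst case. The one genuine problem is in your sharpness sketch: the phrase ``whose domination number, and hence $\iota$, is $\Omega\!\left(\frac{\ln\delta}{\delta}n\right)$'' runs the inequality the wrong way, because $\iota(H)\le\gamma(H)$ for every graph $H$, so a lower bound on the domination number of your bipartite graph gives no lower bound on its isolation number. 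The paper closes exactly this gap with the auxiliary bipartite graph $B(G)$ (defined just before Theorem~\ref{thm-dom}) applied to an Alon--Wormald graph $G$: Theorem~\ref{thm-dom}(ii) proves the nontrivial inequality $\iota(B(G))\ge\gamma(G)$, which is what converts the known domination lower bound for $G$ into an isolation lower bound for the bipartite graph $B(G)$. Your proposal would need an analogous lemma for whichever bipartite construction you choose; without it the ``nearly sharp'' half of the statement is not established.
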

\begin{proof}
Let $G$ be bipartite with bipartition $V_1 \cup V_2$. Let $|V_i| = n_1$ and $|V_2| = n_2$ and assume that $n_1 \le n_2$. Choose a subset $A \subseteq V_1$ each vertex of it independently and at random with probability $P(v \in A) = p$. Let $B \subseteq V_2$ be the set of vertices in $V_2$ having no neighbor in $A$. Then $A \cup B$ is an isolating set of $G$ and thus $\iota(G)$ is at most as large as the expected size of $|A \cup B|$. Note that
\begin{eqnarray*}
E[|A \cup B|] = E[|A|] + E[|B|] = n_1 p + \sum_{v \in V_2} (1-p)^{\deg(v)} \le n_1 p + n_2 (1-p)^{\delta}.
\end{eqnarray*} 
Considering the function $f(x) = n_1 x + n_2 (1-x)^{\delta}$ and its derivative $f'(x) = n_1 - n_2 \delta (1-x)^{\delta-1}$, we can see that $f'(x) = 0$ when $(1-x)^{\delta-1} = \frac{n_1}{n_2 \delta}$. Since $n_1 \le n_2$, we obtain $(1-x)^{\delta-1} \le \frac{1}{\delta}$ and thus $x \ge 1- (\frac{1}{\delta})^{\frac{1}{\delta-1}}$. Hence, we can choose $p = 1- (\frac{1}{\delta})^{\frac{1}{\delta-1}}$. It follows that
\begin{eqnarray*}
\iota(G) \le E[|A \cup B|] \le  n_1 p + n_2 (1-p)^{\delta} = n_1 \left(1- \left(\frac{1}{\delta}\right)^{\frac{1}{\delta-1}}\right) + n_2 \frac{1}{\delta} \le n_1 \frac{\ln \delta}{\delta}+ n_2 \frac{1}{\delta}.
\end{eqnarray*}
Since $n _1 \le n_2$ and the worst case is when $n_1 = n_2 = \frac{n}{2}$, we infer that
$\iota(G) \le \frac{\ln \delta + 1}{2\delta}n$.\\
For the sharpness, consider an Alon-Wormald \cite{AlWo} $(\delta-1)$-regular graph $G$ on $n$ vertices having $\gamma(G) = (1 - o(1)) \frac{\ln \delta}{\delta}n$. Further, take the bipartite graph $B(G)$ described just before Theorem \ref{thm-dom} and note that $|B(G)| = 2n$, $\delta(B(G)) = \delta$. Now, with Theorem \ref{thm-dom}~(ii), we obtain 
\[\iota(B(G)) \ge \gamma(G) \ge (1-o(1)) \frac{\ln \delta}{\delta}n= (1-o(1)) \frac{\ln \delta}{2\delta}n(B(G)),\]
showing that the upper bound given above is nearly sharp.
\end{proof}

In the following theorem, we will sow that $f(\delta,k)$ is monotonically decreasing as $\delta$ grows. This implies in particular that, for any graph $G$ with minimum degree $\delta(G) \ge \delta$, $\iota_k(G) \le f(\delta, k) n(G)$.

\begin{thm}
For $\delta \ge 1$, $f(\delta,k) \ge f(\delta+1,k)$.
\end{thm}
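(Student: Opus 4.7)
The plan is to reinterpret $f(\delta,k)$ as the supremum $\sup_G \iota_k(G)/n(G)$ over graphs $G$ with $\delta(G) = \delta$, and then to show that for every graph $H$ with $\delta(H) = \delta+1$ one has $\iota_k(H)/n(H) \le f(\delta,k)$. Taking the supremum over such $H$ will immediately give $f(\delta+1,k) \le f(\delta,k)$.

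The key idea is to dilute $H$ by an \emph{anchor} component whose sole purpose is to depress the minimum degree from $\delta+1$ to exactly $\delta$, without changing the limiting isolation ratio. Given $H$ with $\delta(H) = \delta+1$, for each positive integer $m$ define $G_m$ to be the disjoint union of $m$ copies of $H$ together with one copy of $K_{\delta+1}$. Since $\delta \ge 1$, we have $\delta(K_{\delta+1}) = \delta < \delta+1 = \delta(H)$, so $\delta(G_m) = \delta$, and $G_m$ is eligible in the definition of $f(\delta,k)$. Because $k$-isolating sets decompose over connected components (the restriction of a $k$-isolating set to each component is $k$-isolating, and the union of $k$-isolating sets of the components is $k$-isolating for the union), the parameter $\iota_k$ is additive on disjoint unions, giving
\[
\iota_k(G_m) \;=\; m\,\iota_k(H) + \iota_k(K_{\delta+1}).
\]
Moreover $\iota_k(K_{\delta+1}) \le 1$, since a single vertex dominates $K_{\delta+1}$ entirely and thus leaves an empty (trivially $k$-independent) remainder.

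Consequently,
\[
\frac{\iota_k(G_m)}{n(G_m)} \;=\; \frac{m\,\iota_k(H) + \iota_k(K_{\delta+1})}{m\,n(H) + (\delta+1)} \;\longrightarrow\; \frac{\iota_k(H)}{n(H)} \quad \text{as } m \to \infty.
\]
Because $\delta(G_m) = \delta$, the definition of $f(\delta,k)$ yields $\iota_k(G_m)/n(G_m) \le f(\delta,k)$ for every $m$; letting $m \to \infty$ gives $\iota_k(H)/n(H) \le f(\delta,k)$, and taking the supremum over all admissible $H$ completes the argument. I do not anticipate a serious obstacle: the only subtle point is that the anchor must produce minimum degree exactly $\delta$ (not larger), which is why $K_{\delta+1}$ is chosen rather than, say, a single vertex, and the convergence of the ratio is immediate from additivity.
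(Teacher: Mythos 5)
Your proof is correct and uses essentially the same device as the paper: adjoining a disjoint $K_{\delta+1}$ anchor to lower the minimum degree from $\delta+1$ to $\delta$ while leaving the isolation ratio asymptotically unchanged. The only (cosmetic) difference is the dilution mechanism — you take $m\to\infty$ copies of a fixed $H$, whereas the paper works with a sequence $H_i$ of graphs with $n(H_i)\to\infty$ approaching the supremum; your version in fact sidesteps the paper's slightly awkward discussion of whether that supremum is attained.
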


\begin{proof}
Since $f(\delta+1, k) = \lim_{n \rightarrow \infty} f(\delta+1, k,n)$, there is a sequence of graphs $(H_i)_{i\ge1}$ with $\delta(H_i) = \delta+1$ such that $n_i = n(H_i)$ tends to infinity as $i$ grows and 
\[\lim_{i \rightarrow \infty} \frac{\iota_k(H_i)}{n_i} = f(\delta+1,k). \]
Note that the infimum is either a minimum but then it is obtained by arbitrarily many copies of the graph that realizes the minimum or there is a sequence as above with $\lim_{i \rightarrow \infty}\frac{\iota_k(H_i)}{n_i} = f(\delta+1,k)$. So, in any case, we can use such a sequence.
Consider now the graphs $G_i = H_i \cup K_{\delta+1}$. Then $\delta(G_i) = \delta$, $n_i(G_i) = n_i + \delta + 1$ and $\iota_k(G_i) \ge \iota_k(H_i)$ for $i \ge 1$ and we obtain
\[f(\delta,k) \ge \lim_{i \rightarrow \infty} \frac{\iota_k(G_i)}{n(G_i)} \ge \lim_{i \rightarrow \infty}\frac{\iota_k(H_i)}{n_i+\delta+1} = \lim_{i \rightarrow \infty}\frac{\iota_k(H_i)n_i}{n_i(n_i+\delta+1)}  = f(\delta+1,k)\]
and we are done.
\end{proof}

\subsection{Bounds in terms of order and minimum degree: connected graphs}

\begin{thm} \label{thm-gral}
Let $\mathcal{F}$ be a family of graphs and let $\delta \ge 1$. Let $H$ be a connected graph with $\iota(H, \mathcal{F})= q$. Then, for arbitrarily large $n$, if $\delta = 1$  and $\delta(H) = 1$ or if $\delta \ge 2$ and $\delta(H) \ge  \delta$, there is a connected graph $G$ on $n$ vertices such that $\delta(G) = \delta$ and \[\iota(G, \mathcal{F}) \ge \frac{q}{n(H)+1}n.\]
\end{thm}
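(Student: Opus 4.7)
The plan is to construct, for each sufficiently large integer $t$, a connected graph $G_t$ of order $n = t(n(H)+1)$ with $\delta(G_t) = \delta$ and $\iota(G_t, \mathcal{F}) \ge qt$; this is exactly the inequality $\iota(G_t, \mathcal{F}) \ge \frac{q}{n(H)+1}n$. I take $t$ vertex-disjoint copies $H_1, \ldots, H_t$ of $H$ together with $t$ new vertices $c_1, \ldots, c_t$, which I call \emph{connectors}. For each $i$, I add a single edge $c_i x_i$, where $x_i \in V(H_i)$; when $\delta = 1$, I choose $x_i$ so that some degree-$1$ vertex of $H_i$ remains different from $x_i$, which is possible because $\delta(H) = 1$ and $n(H) \ge 2$. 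The edges among the connectors are prescribed as follows: if $\delta \in \{1, 2\}$ they form the path $c_1 c_2 \cdots c_t$, while if $\delta \ge 3$ they span a connected $(\delta-1)$-regular graph, which exists for arbitrarily large $t$ with $t(\delta-1)$ even (e.g.\ a suitable circulant).

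The graph $G_t$ is connected, since the connector subgraph is connected and each $H_i$ is glued to it by $c_i x_i$. For the minimum degree, every vertex of every $H_i$ has $G_t$-degree at least $\delta(H) \ge \delta$, while a connector $c_i$ has degree $2$ at the two path-endpoints when $\delta \in \{1, 2\}$, degree $3$ in the interior of the path when $\delta \in \{1, 2\}$, and degree $(\delta-1)+1 = \delta$ when $\delta \ge 3$. Therefore $\delta(G_t) = \delta$: witnessed, in the case $\delta = 1$, by a leaf of some $H_i$ distinct from $x_i$; in the case $\delta = 2$, by $c_1$ or $c_t$; and in the case $\delta \ge 3$, by any connector.

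For the lower bound, let $S$ be an arbitrary $\mathcal{F}$-isolating set of $G_t$, write $S_i = S \cap V(H_i)$, and define
\[
T_i = \begin{cases} S_i \cup \{x_i\} & \text{if } c_i \in S, \\ S_i & \text{otherwise.}\end{cases}
\]
The key claim is that $T_i$ is an $\mathcal{F}$-isolating set of $H_i$ for every $i$. Writing $R_i = V(H_i) \setminus N_{H_i}[T_i]$ and $R = V(G_t) \setminus N_{G_t}[S]$, a short case analysis yields $R_i \subseteq R$: a vertex $v \in R_i$ is either distinct from $x_i$, in which case all its $G_t$-neighbours lie in $V(H_i) \setminus T_i \subseteq V(G_t) \setminus S$, or equal to $x_i$, which forces $c_i \notin S$ (otherwise $v \in T_i$) and again leaves $v$ free of $S$-neighbours in $G_t$. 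Hence $H_i[R_i]$ is an induced subgraph of the $\mathcal{F}$-free graph $G_t[R]$, so $H_i[R_i]$ is itself $\mathcal{F}$-free and $|T_i| \ge \iota(H_i, \mathcal{F}) = q$. Since $|T_i| \le |S_i| + 1$ when $c_i \in S$ and $|T_i| = |S_i|$ otherwise, summing over $i$ gives
\[
qt \;\le\; \sum_{i=1}^t |T_i| \;\le\; \sum_{i=1}^t |S_i| + |\{i : c_i \in S\}| \;=\; |S|,
\]
and so $\iota(G_t, \mathcal{F}) \ge qt = \frac{q}{n(H)+1}\,n$, as required.

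The main obstacle is engineering the connector subgraph so that $\delta(G_t) = \delta$ holds exactly even when $\delta(H) > \delta$: that is why the case $\delta \in \{1, 2\}$ uses a path (forcing $c_1, c_t$ to have degree $2$) while $\delta \ge 3$ requires a $(\delta-1)$-regular spine. Once the construction is in place, the isolation estimate rests on the single observation that each copy of $H$ hangs off the rest of $G_t$ by the one edge $c_i x_i$, so converting $S$ to a legal isolating set of $H_i$ costs at most one extra vertex—exactly the slack needed for the counting to close up to $qt$.
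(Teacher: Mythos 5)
Your proposal is correct and follows essentially the same route as the paper: $t$ disjoint copies of $H$, each attached by a single pendant edge to a vertex of a connected spine on $t$ vertices with minimum degree $\delta-1$ (a path or a $(\delta-1)$-regular graph in your version, an arbitrary such graph in the paper's), followed by the same per-copy accounting in which an isolating set must spend at least $q$ vertices on each block $V(H_i)\cup\{c_i\}$, since $S_i$ (or $S_i\cup\{x_i\}$ when the connector is chosen) is forced to be an $\mathcal{F}$-isolating set of $H_i$. Your write-up is, if anything, slightly more careful than the paper's in verifying $R_i\subseteq R$ and in arranging $\delta(G_t)=\delta$ exactly.
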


\begin{proof}
Let $H$ be a connected graph with $\delta(H) \ge  \delta \ge 2$ or $\delta(H) = \delta = 1$ and let $\iota(H, \mathcal{F}) = q$. Define a graph $G$ according to the following cases. \\
\textit{Case 1: $\delta = 1$.}
Take $t \ge \delta+1$ copies $H_1, H_2, \ldots, H_t$ of $H$ and a connected graph $G^*$ on $t$ vertices $v_1, v_2, \ldots, v_t$. For $1 \le i \le t$, select one vertex $u_i$ from $V(H_i)$ such that in $V(H_i) \setminus \{u_i\}$ there is still one vertex of degree $1$ in $H_i$ and connect $u_i$ and $v_i$ by an edge. Then $n(G) = n = t (n(H)+1)$ and, clearly, $\delta(G) = 1$. \\
\textit{Case 2: $\delta \ge 2$.}
Take $t \ge \delta+1$ copies $H_1, H_2, \ldots, H_t$ of $H$ and a connected graph $G^*$ on $t$ vertices $v_1, v_2, \ldots, v_t$ and minimum degree $\delta(G^*) = \delta - 1 \ge 1$. For $1 \le i \le t$, select one vertex $u_i$ from $V(H_i) = V_i$ and connect $u_i$ and $v_i$ by an edge. Then $n(G) = n = t (n(H)+1)$ and, clearly, $\delta(G) = \delta$.\\
Now, in both cases, let $S$ be a minimum $\mathcal{F}$-isolating set of $G$ and let $S_i = S \cap V_i$, for $1 \le i \le t$. Fix one $i \in \{1,2,\ldots,t\}$. If $v_i \notin S$, then $H_i - N_{H_i}[S_i] \subseteq G - N_G[S]$ and thus $H_i - N_{H_i}[S_i]$ is $\mathcal{F}$-free. Hence, $S_i$ is an $\mathcal{F}$-isolating set of $H_i$, which yields $|S_i| \ge q$. On the other side, if$v_i \in S$, then we have $H_i - N_{H_i}[S_i \cup \{u_i\}] \subseteq G-N_G[S]$ and thus $H_i - N_{H_i}[S_i \cup \{u_i\}] $ is $\mathcal{F}$-free. This implies that $S_i \cup \{u_i\}$ is an $\mathcal{F}$- isolating set of $H_i$ and thus $|S_i \cup \{u_i\}| \ge r$, from which we deduce $|S_i| \ge q-1$. For both constructions, it follows that $|(V_i \cup \{v_i\}) \cap S| \ge k$ for $1 \le i \le t$, yielding $\iota(G, \mathcal{F}) = |S| \ge t q= \frac{q}{n(H)+1}n$.
\end{proof}

The following two corollaries follow partially from Theorem \ref{thm-gral}. We shall need the following notation. 
We define a parameter $f_c(\delta,k)$ the following way. Let 
\[f_c(\delta,k,n) = \inf \{\alpha \in \mathbb{R} : \iota(G,k) \le \alpha \; n(G), G \mbox{ connected graph } n(G) \ge n, \delta(G) = \delta\}.\]
Observe that $f_c(\delta,k,n) \le f_c(\delta,k,n+1) \le 1$. Hence, for fixed $\delta$ and $k$, $f_c(\delta,k,n)$ is a monotone non-decreasing sequence, which is bounded from above. So, we may define
\[f_c(\delta,k) = \lim_{n \rightarrow \infty} f_c(\delta,k,n).\]
Further, in case $k = 0$ we shall use the notation $f_c(\delta)$ for $f_c(\delta,0)$.

\begin{cor}\label{cor-f_c}
The following statements hold.
\begin{enumerate}
\item[(i)] $f_c(1) = f_c(2) = \frac{1}{3}$.
\item[(ii)] For $\delta \ge 3$, $\frac{2}{\delta+4} \le f_c(\delta) \le f(\delta) \le \frac{\ln(\delta+1)+ \frac{1}{2}}{\delta+1}$.
\item[(iii)] For $\delta \ge k+1$, $(1 - o(1)) \frac{\ln(\delta+1)}{(k+2) (\delta+1)} \le f_c(\delta, k) \le f(\delta,k) \le \frac{\ln(\delta+\frac{1}{2}) + 1}{\delta+1}$.
\end{enumerate}
\end{cor}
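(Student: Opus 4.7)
The plan is to deduce each assertion by combining results already established in Sections~2 and 3 with some short constructions. In every part the upper bound $f_c(\delta,k) \le f(\delta,k)$ is immediate from the definitions, since every connected graph of minimum degree $\delta$ is, in particular, a graph of minimum degree $\delta$. Combined with Theorem~\ref{thm:up-bound-min-deg}(i) and (iii), this delivers at once the upper bounds claimed in (ii) and (iii). For the upper bound in (i), Theorem~\ref{thm-n/3} yields $\iota(G) \le n/3$ for every connected graph $G \ne C_5$ on $n \ge 3$ vertices, so $f_c(1), f_c(2) \le \tfrac{1}{3}$.

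For the lower bound in (i), I will invoke the sharpness graphs $G_1$ and $G_2$ displayed at the end of the proof of Theorem~\ref{thm-n/3}: both are connected, have minimum degree $1$ and $2$ respectively, and realize $\iota = n/3$ for arbitrarily large $n$. Hence $f_c(1), f_c(2) \ge \tfrac{1}{3}$, matching the upper bound and giving the claimed equalities.

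For the lower bound in (ii), I will apply Theorem~\ref{thm-gral} with $\mathcal{F} = \{K_2\}$ and $H$ equal to the graph obtained from $K_{\delta+3}$ by deleting the edges of a Hamiltonian cycle. This $H$ is connected, satisfies $\delta(H) = \delta$, and has $\iota(H) = 2$, as already observed in the proof of Theorem~\ref{thm:up-bound-min-deg}(i). Since $\delta \ge 3 \ge 2$, Theorem~\ref{thm-gral} produces arbitrarily large connected graphs $G$ with $\delta(G) = \delta$ and $\iota(G) \ge \tfrac{2}{n(H)+1}\,n = \tfrac{2}{\delta+4}\,n$, whence $f_c(\delta) \ge \tfrac{2}{\delta+4}$.

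For the lower bound in (iii), the strategy is the same, but the input $H$ is provided by the construction underlying Theorem~\ref{thm:up-bound-min-deg}(iii). I will take $H = A \times K_{k+2}$, where $A$ is the Alon--Wormald graph with $\delta(A) = \delta-k-1$ and $\gamma(A) = (1-o(1))\tfrac{\ln(\delta-k)}{\delta-k}\,n(A)$; the hypothesis $\delta \ge k+1$ is exactly what is needed to make this choice legal. Then $\delta(H) = \delta(A) + (k+1) = \delta$, and Theorem~\ref{thm-dom}(i) gives $\iota_k(H) \ge \gamma(A)$. Applying Theorem~\ref{thm-gral} with $\mathcal{F} = \{K_{1,k+1}\}$ produces arbitrarily large connected graphs $G$ with $\delta(G) = \delta$ and $\iota_k(G) \ge \tfrac{\gamma(A)}{n(H)+1}\,n$; letting $n(A) \to \infty$ the quotient $\tfrac{\gamma(A)}{n(H)+1}$ tends to $\tfrac{1}{k+2}\cdot\tfrac{\gamma(A)}{n(A)} = (1-o(1))\tfrac{\ln(\delta-k)}{(k+2)(\delta-k)}$, which matches the target $(1-o(1))\tfrac{\ln(\delta+1)}{(k+2)(\delta+1)}$ once the discrepancy between $\delta-k$ and $\delta+1$ is absorbed into the $(1-o(1))$ factor. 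The only genuinely nontrivial step in the whole argument is to guarantee that the base graph $H$ fed into Theorem~\ref{thm-gral} is connected: I will handle this by choosing the Alon--Wormald graph $A$ to be connected (possible a.a.s.\ for random regular graphs of degree at least $3$, and enforceable in all remaining cases by adding a constant number of edges, which alters neither $\delta(A)$ nor the asymptotic domination ratio).
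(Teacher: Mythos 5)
Your proposal is correct, and for parts (i) and (ii) it coincides with the paper's proof: the paper likewise cites the sharpness graphs $G_1$, $G_2$ from Theorem~\ref{thm-n/3} for (i), and for (ii) feeds $H=K_{\delta+3}-C_{\delta+3}$ (with $\iota(H)=2$, $n(H)=\delta+3$) into Theorem~\ref{thm-gral} to get $f_c(\delta)\ge \frac{2}{\delta+4}$. The only divergence is in part (iii). The paper's proof there is a one-line shortcut you did not take: the graph $H=G\times K_{k+2}$ built in the proof of Theorem~\ref{thm:up-bound-min-deg}~(iii) is \emph{already} connected and arbitrarily large (a Cartesian product with $K_{k+2}$ is connected whenever the first factor is), so it directly witnesses the lower bound for $f_c(\delta,k)$ with no further construction. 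You instead run that product graph through Theorem~\ref{thm-gral} a second time, which costs you an extra bookkeeping step (the $n(H)+1$ denominator and the $\ln(\delta-k)$ versus $\ln(\delta+1)$ discrepancy, both correctly absorbed into the $(1-o(1))$ as $\delta\to\infty$) and obliges you to address connectivity of the Alon--Wormald base graph, which you do adequately. Your detour does buy one thing the paper glosses over: by starting from $\delta(A)=\delta-k-1$ you arrange $\delta(H)=\delta$ exactly, whereas the paper's construction with $\delta(G)=\delta-1$ yields $\delta(H)=\delta+k$, so strictly speaking the paper's witness has the wrong minimum degree and one must invoke monotonicity of $f$ in $\delta$ to finish; your version avoids that. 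The only blemish is the boundary case $\delta=k+1$, where your choice forces $\delta(A)=0$ and the construction degenerates, but since the bound is asymptotic in $\delta$ this is immaterial.
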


\begin{proof}
(i) Since the graphs yielding the sharpness in the bound of Theorem \ref{thm-n/3} are connected and arbitrarily large, this is clear.\\
(ii) Consider the graph $H = K_r - C_r$, a complete graph on $r \ge 4$ vertices without a Hamiltonian cycle. Then $n(H) = r = \delta(H) + 3$ and $\iota(G) = 2$ and, thus, Theorem~\ref{thm-gral} gives $\frac{2}{\delta+4} \le f_c(\delta)$. Since, clearly, $f_c(G) \le f(G)$, the last inequality follows from Theorem~\ref{thm:up-bound-min-deg}~(i).\\
(iii) This follows from the construction of the graph yielding the lower bound of Theorem~\ref{thm:up-bound-min-deg}~(iii), which is already a connected arbitrarily large graph.
\end{proof}

\begin{cor}\label{cor-conn-isol} The following statements hold.
\begin{enumerate}
\item[(i)] For $\mathcal{F} = \{C_k \; | \; k \ge 3\}$, there are arbitrarily large connected graphs $G$ for which $\iota(G, \mathcal{F}) \ge \frac{1}{4} n(G)$.
\item[(ii)] There are arbitrarily large connected graphs $G$ for which $\iota_{K_k}(G) \ge \frac{1}{k+1} n(G)$.
\item[(iii)] For $\mathcal{F}_k$, the family of all trees of order $k \ge 2$, there are arbitrarily large connected graphs $G$ for which $\iota(G, \mathcal{F}_k) \ge \frac{1}{k+1} n(G)$.\\
\end{enumerate}
\end{cor}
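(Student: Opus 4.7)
The plan is to apply Theorem \ref{thm-gral} three times, each time with a cleverly chosen connected base graph $H$ whose isolation ratio $\iota(H,\mathcal{F})/(n(H)+1)$ matches the desired lower bound. Theorem \ref{thm-gral} will then mass-produce the required arbitrarily large connected graphs $G$, so the only work is to pinpoint $H$ and compute $\iota(H,\mathcal{F})$; no separate direct construction of $G$ is needed.

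For part (i), with $\mathcal{F} = \{C_k \mid k \ge 3\}$, I would take $H = K_3$. Since $K_3$ is itself a $C_3 \in \mathcal{F}$, the empty set is not $\mathcal{F}$-isolating, so $\iota(K_3, \mathcal{F}) \ge 1$; on the other hand, any single vertex of $K_3$ dominates the whole graph and leaves an empty (trivially cycle-free) remainder, giving $\iota(K_3, \mathcal{F}) = 1$. Here $n(H) = 3$ and $\delta(H) = 2$, and the ratio is $1/(3+1) = 1/4$. Applying Theorem \ref{thm-gral} with $\delta = 2$ then yields, for arbitrarily large $n$, a connected graph $G$ on $n$ vertices with $\iota(G, \mathcal{F}) \ge n/4$.

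For parts (ii) and (iii) I would use the single choice $H = K_k$ in both cases. The complete graph $K_k$ contains itself as a $K_k$-subgraph, and also contains every $k$-vertex tree as a subgraph (since it contains every graph on $k$ vertices), so the empty set isolates neither $\{K_k\}$ nor $\mathcal{F}_k$. However, any single vertex of $K_k$ dominates the entire graph, leaving an empty remainder. Hence $\iota_{K_k}(K_k) = \iota(K_k, \mathcal{F}_k) = 1$, with $n(H) = k$ and $\delta(H) = k-1$, giving the ratio $1/(k+1)$. A single appeal to Theorem \ref{thm-gral} (applied with any admissible $\delta$, say $\delta = k-1$, or $\delta = 1$ in the boundary case $k=2$) completes both parts.

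I do not foresee any serious obstacle: the heavy lifting sits inside Theorem \ref{thm-gral}, and the only modest decision is the choice of base graph. The one small pitfall to avoid in (i) is a tempting but bad choice such as a single $K_4$ (which gives ratio $1/5$) or two $K_4$'s glued at a vertex (where the shared vertex turns out to dominate the whole graph, yielding $\iota = 1$ and ratio $1/8$); the triangle $K_3$ is the cleanest choice that hits the bound $1/4$ exactly.
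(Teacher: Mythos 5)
Your proposal is correct and takes essentially the same approach as the paper, which likewise just applies Theorem~\ref{thm-gral} with $H = C_3$ for (i) and $H = K_k$ for (ii). The only (immaterial) difference is in (iii), where the paper takes $H = K_{1,k-1}$ rather than $K_k$; both are $k$-vertex graphs that contain a tree of order $k$ and are dominated by a single vertex, so both yield $\iota(H,\mathcal{F}_k)=1$ and the ratio $\frac{1}{k+1}$.
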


\begin{proof}
For (i), (ii) and (iii), apply Theorem \ref{thm-gral} taking, respectively, $H = C_3, K_{k}$ and $K_{1,k-1}$.
\end{proof}

Now we are going to show that $f_c(\delta, k)$ is monotonically decreasing as $\delta$ grows (where $\delta \ge 2$). This implies in particular that, for any connected graph $G$ with minimum degree $\delta(G) \ge \delta$, $\iota_k(G) \le f_c(\delta, k) n(G)$.

\begin{thm}
For $\delta \ge 2$, $f_c(\delta,k) \ge f_c(\delta+1,k)$.
\end{thm}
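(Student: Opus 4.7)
The plan is to imitate the proof of the previous (disconnected) theorem $f(\delta,k)\ge f(\delta+1,k)$, in which a pendant copy of $K_{\delta+1}$ was appended disjointly to each graph of a sequence realizing $f(\delta+1,k)$. To preserve connectivity I will glue the $K_{\delta+1}$ to the main graph by a single edge; the task is then to show that this gluing costs at most one vertex in the isolating set.

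By the definition of $f_c(\delta+1,k)$, fix a sequence $(H_i)_{i\ge 1}$ of connected graphs with $\delta(H_i)=\delta+1$, $n(H_i)\to\infty$, and $\iota_k(H_i)/n(H_i)\to f_c(\delta+1,k)$. For each $i$, take a copy of $K_{\delta+1}$ vertex-disjoint from $H_i$, pick any $u\in V(H_i)$ and any $w\in V(K_{\delta+1})$, and let $G_i$ be the graph obtained from the disjoint union of $H_i$ and $K_{\delta+1}$ by adding the single edge $uw$. Then $G_i$ is connected, $n(G_i)=n(H_i)+\delta+1$, and $\delta(G_i)=\delta$, since the $\delta$ vertices of $V(K_{\delta+1})\setminus\{w\}$ keep degree $\delta$ in $G_i$ while every other vertex has degree at least $\delta+1$.

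The key step is the inequality $\iota_k(G_i)\ge \iota_k(H_i)-1$. Let $S$ be a minimum $k$-isolating set of $G_i$, write $S_H=S\cap V(H_i)$ and $S_K=S\cap V(K_{\delta+1})$, and set $S':=S_H\cup\{u\}$. I claim $S'$ is a $k$-isolating set of $H_i$. Put $T:=V(H_i)\setminus(N_{H_i}[S_H]\cup\{u\})$, so $V(H_i)\setminus N_{H_i}[S']\subseteq T$. Any $v\in T$ satisfies $v\neq u$ and $v\notin N_{H_i}[S_H]$; since $uw$ is the only edge between $V(H_i)$ and $V(K_{\delta+1})$ in $G_i$, the vertex $v$ is also not in $N_{G_i}[S_K]$. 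Hence $T\subseteq R(S):=V(G_i)\setminus N_{G_i}[S]$, and because $u\notin T$ the induced subgraphs $H_i[T]$ and $G_i[T]$ coincide. Thus $H_i[T]\subseteq G_i[R(S)]$ is $K_{1,k+1}$-free, and so is $H_i[V(H_i)\setminus N_{H_i}[S']]$. This yields $\iota_k(H_i)\le |S'|\le |S_H|+1\le |S|+1=\iota_k(G_i)+1$.

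To conclude, divide the inequality $\iota_k(G_i)\ge \iota_k(H_i)-1$ by $n(G_i)=n(H_i)+\delta+1$ and let $i\to\infty$: the right-hand side tends to $f_c(\delta+1,k)$ since $n(H_i)\to\infty$, while on the other hand $\iota_k(G_i)/n(G_i)\le f_c(\delta,k,n(G_i))$ and $f_c(\delta,k,n(G_i))\to f_c(\delta,k)$. Combining gives $f_c(\delta,k)\ge f_c(\delta+1,k)$. The main obstacle is exactly the key step above: one must ensure that the gluing edge $uw$ cannot introduce a $K_{1,k+1}$ into $H_i[V(H_i)\setminus N_{H_i}[S']]$ that was absent from $G_i[R(S)]$, and the specific choice of including $u$ in $S'$ (and hence excluding $u$ from $T$) is what kills $u$ as a potential problematic star centre while guaranteeing that every remaining vertex of $T$ has identical $H_i$- and $G_i$-neighbourhoods inside the remainder.
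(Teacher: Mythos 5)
Your proof is correct, but it reaches the result by a different construction than the paper. The paper handles connectivity by invoking its general Theorem~\ref{thm-gral}: it takes $t \ge \delta+1$ pendant copies of each $H_i$ and attaches them by single edges to a connected hub graph $G^*$ on $t$ vertices with $\delta(G^*)=\delta-1$, then uses the per-copy lower bound $|(V_i\cup\{v_i\})\cap S|\ge q_i$ to get $\iota_k(G_i)\ge t\,q_i$ and the ratio $q_i/(n_i+1)$. You instead adapt the paper's own proof of the disconnected analogue ($f(\delta,k)\ge f(\delta+1,k)$, where a $K_{\delta+1}$ is added as a separate component) by gluing a single $K_{\delta+1}$ to one copy of $H_i$ with one edge, and you prove directly the key inequality $\iota_k(G_i)\ge\iota_k(H_i)-1$ via the observation that $S_H\cup\{u\}$ isolates $H_i$; that argument is sound, since the only vertex of $H_i$ with a neighbour in the clique is $u$, which you place in $S'$, so the remainder of $S'$ in $H_i$ sits inside $R(S)$ and induces the same graph in $H_i$ as in $G_i$. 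Your route is more self-contained and elementary (it needs no hub graph and no appeal to Theorem~\ref{thm-gral}), at the cost of an additive loss of $1$ in the isolating number, which is harmless after dividing by $n(G_i)\to\infty$; the paper's route buys a cleaner multiplicative statement $\iota_k(G_i)\ge t\,q_i$ and reuses machinery already established for the lower-bound constructions of Corollary~\ref{cor-f_c}. Both arguments share the same overall strategy of perturbing an extremal sequence for minimum degree $\delta+1$ into connected graphs of minimum degree $\delta$, and both are valid for all $\delta\ge 2$.
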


\begin{proof}
Since $\displaystyle f_c(\delta+1,k) = \lim_{n \rightarrow \infty} f_c(\delta+1,k,n)$, there is a sequence $(H_i)_{i \ge 1}$ of connected graphs $H_i$ with $\delta(H_i) = \delta+1$, $n_i = n(H_i)$, $q_i = \iota_k(H_i)$ and $\displaystyle \lim_{i \rightarrow \infty} n_i$ tending to infinity such that 
\[\lim_{i \rightarrow \infty} \frac{q_i}{n_i} = f_c(\delta+1, k).\]
By Theorem \ref{thm-gral}, there are connected graphs $G_i$ with $\delta(G_i) = \delta \ge 2$ with $\frac{\iota_k(G_i)}{n(G_i)} \ge \frac{q_i}{n_i+1}$. Then we have 
\[
 f_c(\delta, k) \ge \lim_{i \rightarrow \infty} \frac{\iota_k(G_i)}{n(G_i)} \ge \lim_{i \rightarrow \infty} \frac{q_i}{n_i+1} = \lim_{i \rightarrow \infty} \frac{q_i n_i}{n_i (n_i+1)} = \lim_{i \rightarrow \infty} \frac{q_i}{n_i} = f_c(\delta,k)  
\]
and we are done.
\end{proof}


\section{Lower bounds}

Let $G$ be a graph. With $G^2$ we denote the \emph{power-$2$ graph} of $G$, that is, the graph that arises from $G$ by adding all edges between vertices within distance $2$. Recall also that $\alpha_k(G)$ is the $k$-independence number of $G$.

\begin{thm}\label{thm-lb-power-alpha} Let $G$ be a graph with minimum degree $\delta$ and maximum degree $\Delta$.
\begin{enumerate}
\item[(i)]  If $\delta \ge k+1$, then $\iota_k(G) \ge \gamma(G^2)$.
\item[(ii)] If $\Delta \ge k+1$, $\iota_k(G) \ge \frac{n+1- \alpha_k(G)}{\Delta+1}$ and this is sharp.
\end{enumerate}
\end{thm}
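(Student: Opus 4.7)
The plan for part (i) is to show that any minimum $k$-isolating set $S$ of $G$ is in fact a dominating set of $G^2$, which immediately gives $\gamma(G^2) \le |S| = \iota_k(G)$. For any $u \in N_G[S]$ there is a vertex of $S$ within distance one in $G$, so $u$ is dominated by $S$ already in $G^2$. The interesting case is $u \in R(S) = V \setminus N_G[S]$. Since $\Delta(G[R(S)]) \le k$ and the hypothesis $\delta \ge k+1$ forces $\deg_G(u) \ge k+1$, the vertex $u$ has at least $\delta - k \ge 1$ neighbors outside $R(S)$; being outside $N_G[S]$ but with neighbors outside $R(S)$, at least one such neighbor $w$ must lie in $N_G(S)$. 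That $w$ has a neighbor $s \in S$, placing $u$ at distance two from $s$ in $G$ and therefore adjacent to $s$ in $G^2$.

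For part (ii), the naive count is $n = |N_G[S]| + |R(S)| \le (\Delta+1)|S| + \alpha_k(G)$, which only yields $\iota_k(G) \ge \tfrac{n - \alpha_k(G)}{\Delta+1}$, missing the $+1$. To recover it, I would exploit the assumption $\Delta \ge k+1$. Such a $G$ contains $K_{1,k+1}$ as a subgraph, so $\iota_k(G) \ge 1$ and a minimum $k$-isolating set $S$ is nonempty. Pick any $v \in S$. Since $R(S) = V \setminus N_G[S]$, the vertex $v$ has no neighbor in $R(S)$, so $v$ is isolated in $G[R(S) \cup \{v\}]$. Therefore $G[R(S) \cup \{v\}]$ still has maximum degree at most $k$, i.e., $R(S) \cup \{v\}$ is itself a $k$-independent set, which upgrades the bound to $|R(S)| \le \alpha_k(G) - 1$. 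Substituting yields $n \le (\Delta+1)\iota_k(G) + \alpha_k(G) - 1$, which rearranges to the desired inequality.

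For sharpness I would point to $G = K_{n,n}$ (or the easier case $G = K_n$): one has $\iota(K_{n,n}) = 1$, $\alpha(K_{n,n}) = n$, $\Delta = n$, and $\tfrac{2n + 1 - n}{n+1} = 1 = \iota(K_{n,n})$. Analogous equality holds for $K_n$ when $k = 0$, so the bound is tight.

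The only genuinely nonobvious step is the $+1$ trick in (ii) — augmenting $R(S)$ by a vertex of $S$ without destroying $k$-independence; the rest is routine degree-counting together with distance-two tracing through $N_G(S)$ in (i).
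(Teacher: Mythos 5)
Your argument is correct and follows essentially the same route as the paper: in (i) the hypothesis $\delta\ge k+1$ together with $\Delta(G[R(S)])\le k$ forces every non-dominated vertex to have a neighbor in $N(S)$, hence distance at most two from $S$; and in (ii) the same ``$+1$ trick'' of adjoining a vertex of $S$ to $R(S)$ to get a $k$-independent set of size $|R(S)|+1$ is exactly the paper's key step. The only shortfall is that your sharpness examples ($K_{n,n}$, $K_n$) verify tightness only for $k=0$, whereas the theorem asserts sharpness for general $k$; the paper exhibits this with $\Delta$ disjoint copies of $K_{k+1}$ each joined by an edge to a common central vertex $x$, for which $\iota_k=1$ and the bound holds with equality.
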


\begin{proof}
(i) Let $S$ be a minimum $k$-isolating set of $G$. Then every vertex in $N(S)$ has a neighbor in $S$. Also, since $\delta \ge k+1$ and $G - N[S]$ is $K_{1,k+1}$-free, every vertex in $V(G) \setminus N[S]$ has a neighbor in $N(S)$. Hence, every vertex in $V(G) \setminus N[S]$ is within distance $2$ from a vertex of $S$ and thus $S$ is a dominating set of $G^2$, yielding $\gamma(G^2) \le \iota_k(G)$.\\
(ii) Let $S$ be a minimum $k$-isolating set of $G$. Since $\Delta \ge k+1$, $S \neq \emptyset$. Let $x \in S$. Then, Since $G - N[S]$ is $K_{1,k+1}$-free and $x$ has no neighbors in $V(G) \setminus N[S]$, $(V(G) \setminus N[S]) \cup \{x\}$ is a $k$-independent set of $G$ and, therefore, $\alpha_k(G) \ge n - |N[S]| + 1$. Since every vertex in $S$ has at most $\Delta$ neighbors in $N(S)$, we obtain 
\[\alpha_k(G) \ge n - |N[S]| + 1 = n - |S| - |N(S)| + 1 \ge n - |S| - \Delta |S| + 1 = n - (\Delta+1) |S| + 1.\]
This implies $\iota_k(G) = |S| \ge \frac{n+1 - \alpha_k(G)}{\Delta+1}$. For the sharpness, consider a graph $G$ consisting of $\Delta$ copies $G_1, G_2, \ldots, G_{\Delta}$ of $K_{k+1}$ and a vertex $x$. Now select a vertex $x_i \in V(G_i)$ and include the edges $xx_i$, $1 \le i \le \Delta$. Then $\{x\}$ is a minimum $k$-isolating set and $V(G) \setminus \{x_1, x_2, \dots, x_{\Delta}\}$ is a maximum $k$-independent set. Hence, $\iota_k(G) = 1 = \frac{\Delta(k+1)+2 - (\Delta k + 1)}{\Delta+1} = \frac{n(G) +1 - \alpha_k(G)}{\Delta +1}$.
\end{proof}

In the following theorem, we give a lower bound for the isolation number $\iota(G)$ of a graph $G$ in terms of its maximum degree and average degree. For subsets $A, B \subseteq V(G)$, we will use the notation $m(A, B)$ for the number of edges with one vertex in $A$ and one in $B$.

\begin{thm}\label{thm-lb-degrees}
Let $G$ be a graph on $n$ vertices with average degree ${\rm d}$ and maximum degree $\Delta$. Then 
\[\iota(G) \ge \frac{{\rm d} n}{2 \Delta^2}.\]
Moreover, equality holds if and only if, for an integer $t \ge 1$, $G$ is a bipartite graph with partition sets $A$ and $B$, $|A| \le |B|$, where $A = \{a_{i,j} \, | \, 1 \le i \le t, \; 1 \le j \le \Delta\}$ and $S = \{s_i \,| \, 1 \le i \le t\} \subseteq B$, such that the following holds: $N(s_i) = \{a_{i,j} \, | \, 1 \le j \le \Delta\}$, $|N(a) \cap B| = \Delta$ for all $a \in A$ and $|N(b) \cap A| \le \Delta$ for all $b \in B$. 
\end{thm}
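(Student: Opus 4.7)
The plan is a double counting argument. Let $S$ be a minimum isolating set of $G$, so $|S|=\iota(G)$. Since $V(G)\setminus N[S]$ is independent, every edge of $G$ has at least one endpoint in $N[S]=\bigcup_{v\in S}N[v]$, and hence at least one endpoint in $N[v]$ for some $v\in S$. For each $v\in S$ let $f(v)$ denote the number of edges of $G$ having at least one endpoint in $N[v]$; then, with possible over-counting, $|E(G)|\le\sum_{v\in S} f(v)$. The core estimate is $f(v)\le\Delta^{2}$. By counting endpoint-incidences,
\[
f(v)\;=\;\sum_{u\in N[v]}\deg(u)\;-\;|E(G[N[v]])|;
\]
combining $\deg(u)\le\Delta$ with the lower bound $|E(G[N[v]])|\ge\deg(v)$ (the $\deg(v)$ edges from $v$ to its neighbours all lie in $G[N[v]]$) yields
\[
f(v)\;\le\;(\deg(v)+1)\Delta-\deg(v)\;=\;\deg(v)(\Delta-1)+\Delta\;\le\;\Delta^{2}.
\]
Summing gives $|E(G)|\le\Delta^{2}\,\iota(G)$, i.e.\ $\iota(G)\ge|E(G)|/\Delta^{2}={\rm d}\,n/(2\Delta^{2})$.

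For the equality characterization I would trace back where each inequality becomes tight. Equality in $f(v)\le\Delta^{2}$ forces $\deg(v)=\Delta$, $\deg(u)=\Delta$ for every $u\in N(v)$, and $|E(G[N[v]])|=\deg(v)$, so $N(v)$ is independent. Equality in $|E(G)|\le\sum_{v} f(v)$ forces every edge to have an endpoint in exactly one $N[v]$. Writing $S=\{v_{1},\dots,v_{t}\}$ and $A_{i}:=N(v_{i})$, the ``exactly one'' condition forces the $A_{i}$ to be pairwise disjoint, forces $S$ to be independent (else $v_{i}v_{j}$ would be covered twice), and rules out edges between distinct $A_{i}$ and $A_{j}$. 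Consequently $A:=\bigcup_{i}A_{i}$ is independent, and every edge of $G$ joins $A$ to $B:=S\cup(V(G)\setminus N[S])$, so $G$ is bipartite with parts $A$ and $B$, $S\subseteq B$, and with exactly the neighbourhood pattern required by the theorem (with $N(v_i)=\{a_{i,j}:1\le j\le\Delta\}$). The condition $|A|\le|B|$ then drops out of a degree-sum count on the bipartition: $|A|\cdot\Delta=t\Delta^{2}$ equals the degree sum on the $B$-side, while $\deg(b)\le\Delta$ for every $b\in B$ forces $|B|\ge t\Delta=|A|$. The converse direction, that any graph of the stated form attains equality, is a short verification.

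The main obstacle I anticipate is not a single inequality but the bookkeeping of the equality case: several structural features (independence of each $N(v)$, pairwise disjointness of the $A_{i}$, independence of $S$, and the overall bipartition) must be extracted simultaneously from the ``each edge covered exactly once'' condition together with the local equalities $f(v)=\Delta^{2}$, and one must then verify the converse so that the characterization is both necessary and sufficient.
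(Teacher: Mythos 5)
Your proposal is correct and follows essentially the same route as the paper: both arguments bound the number of edges by $\Delta^2\,|S|$ for a minimum isolating set $S$ (you do the bookkeeping per vertex $v\in S$ via $f(v)\le\Delta^2$, the paper aggregates as $m(S,N[S])+m(N(S),V\setminus S)\le\Delta|S|+(\Delta-1)\Delta|S|$, but the underlying saving — each vertex of $N(S)$ already spends one of its $\Delta$ edge-slots on $S$ — is identical), and the equality analysis extracts the same bipartite structure. Your tracing of the equality case is in fact spelled out in more detail than the paper's, which simply asserts that independence of $N(S)$ and of $S\cup(V\setminus N[S])$ makes the structure "clear".
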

\begin{proof}
Let $V = V(G)$ and let $S$ be a minimum isolating set of $G$. We will bound the number of edges from above. Clearly, $m(S, N[S]) \le \Delta |S|$. Moreover, since the vertices of $N(S)$ have all at least one neighbor in $S$, we have $m(N(S), V \setminus S) \le (\Delta-1) |N(S)| \le (\Delta-1) \Delta |S|$. Hence,
\[m(G) \le \Delta |S| + (\Delta -1) \Delta |S| = \Delta^2 |S| = \Delta^2 \iota(G).\]
Thus it follows $\iota(G) \ge \frac{m(G)}{\Delta^2} = \frac{{\rm d}n}{2 \Delta^2}$. 

Now suppose that we have a graph $G$ with average degree ${\rm d}$ and maximum degree $\Delta$ such that $\iota(G) = \frac{{\rm d} n}{2 \Delta^2}$. Then, for any minimum isolating set $S$ of $G$, we have equalities in all the above inequalities and thus $m(S, N[S]) = \Delta |S|$ and $m(N(S), V \setminus S) = (\Delta-1) |N(S)| = (\Delta-1) \Delta |S|$. Hence, both $N(S)$ and $S \cup (V\setminus N[S])$ are independent sets. Setting $A = N(S)$ and $B = S \cup (V\setminus N[S])$, $S = \{s_i \,| \, 1 \le i \le t\}$, $N(s_i) = \{a_{i,j} \, | \, 1 \le j \le \Delta\}$, and $A = \{a_{i,j} \, | \, 1 \le i \le t, \; 1 \le j \le \Delta\}$ it is clear that $G$ is of the form described in the statement of the theorem. For the converse, consider a bipartite graph $G$ with partite sets $A$ and $B$, $|A| \le |B|$, where $A = \{a_{i,j} \, | \, 1 \le i \le t, \; 1 \le j \le \Delta\}$ and $S = \{s_i \,| \, 1 \le i \le t\} \subseteq B$, such that the following holds: $N(s_i) = \{a_{i,j} \, | \, 1 \le j \le \Delta\}$, $|N(a) \cap B| = \Delta$ for all $a \in A$ and $|N(b) \cap A| \le \Delta$ for all $b \in B$. Clearly, $S$ is an isolating set of $G$. Moreover, $m(G) = \Delta |A| = \Delta^2 t$. Hence, by the above inequality, $\Delta^2 t = m(G) \le \Delta^2 \iota(G) \le \Delta^2 |S| = \Delta^2 t$. Thus, $\iota(G) = \frac{m(G)}{\Delta^2} = \frac{dn}{2\Delta^2}$.
\end{proof}

Observe that, for an $r$-regular graph $G$ of order $n$ and attaining the bound of Theorem \ref{thm-lb-degrees}, we have $\gamma(G) = \frac{n}{r+1}$, while $\iota(G) = \frac{n}{2r}$. Hence, we have here another example where the parameters $\iota(G)$ and $\gamma(G)$ differ considerably, namely here on a factor of $\frac{1}{2}$.


\section{Bounds for certain families of graphs}

In the following two theorems, we will deal with the $k$-isolation number of trees. Given a tree $T$, we will call a vertex $x \in V(T)$ a \emph{support vertex} of $T$ if $x$ is neighbor of a leaf, i.e. a vertex of degree one. Moreover, an \emph{inner vertex} of $T$ is a vertex that is not a leaf.

\begin{thm}
Let $T$ be a tree on $n$ vertices and different from $K_{1,k+1}$. Then $\iota_k(T) \le \frac{n}{k+3}$ and this is sharp.
\end{thm}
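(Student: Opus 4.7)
I would prove the upper bound by induction on $n$, mirroring the strategy of Theorem~\ref{thm-n/3} via a case analysis on the leaf-neighborhood structure at the bottom of $T$. The base cases are short: if $n \le k+2$ and $T \ne K_{1,k+1}$, then $\Delta(T) \le k$ and $\iota_k(T) = 0$; if $n = k+3$, a vertex $v$ of maximum degree $\ge k+1$ satisfies $|V\setminus N[v]| \le 1$, whence $\iota_k(T) \le 1 = \frac{n}{k+3}$.

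For the inductive step $n \ge k+4$, I would first dispose of the easy subcases $\Delta(T) \le k$ (where $\iota_k=0$) and $T$ being a star (where $\iota_k = 1 \le \frac{n}{k+3}$). In the remaining case $T$ has diameter $\ge 3$, so I fix a longest path $v_1 v_2 \ldots v_p$ ($p \ge 4$), chosen so that $\deg(v_2)$ is maximized over all such paths, and root $T$ at $v_p$. The longest-path property forces every child of $v_2$ and every grandchild of $v_3$ to be a leaf. I would then split according to $\deg(v_2)$: if $\deg(v_2) \ge k+2$, then $|N[v_2]| \ge k+3$ and $v_2$ is added to the isolating set; if $\deg(v_2) \le k+1$, I would shift focus to $v_3$ (taking it as the isolating vertex when $\deg(v_3) \ge k+2$, in which case the leaf-grandchildren that become singletons after removing $N[v_3]$ are automatically $K_{1,k+1}$-free), passing further up the path when needed.

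The central technical obstacle is that the residual forest $T - N[v]$ may create new $K_{1,k+1}$-components; most notably, a sibling $c$ of $v_2$ with exactly $k+1$ leaf children becomes such a component after $v_3$ is deleted. To absorb these defects, my plan is to strengthen the induction hypothesis to the following statement about forests:
\[
\iota_k(F) \;\le\; \frac{n(F) + p(F)}{k+3},
\]
where $p(F)$ denotes the number of components of $F$ isomorphic to $K_{1,k+1}$; the tree case is $p(F)=0$. Under this strengthening each inductive step only needs $|N[v]| \ge k+3 + p(T-N[v])$, and the flexibility of choosing $v$ among $v_2$, a sibling of $v_2$, or $v_3$ (possibly inserting the offending sibling into the isolating set along with $v_2$) suffices in every subcase.

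For sharpness I would take any tree $H$ on $t$ vertices and, for each $u\in V(H)$, attach a new vertex $c_u$ adjacent to $u$ together with $k+1$ new leaves adjacent to $c_u$. The resulting tree has $t(k+3)$ vertices and satisfies $\iota_k(T) = t = \frac{n}{k+3}$, since the $t$ pairwise disjoint sets $N_T[c_u]$ must each contain at least one isolating vertex in order to prevent the $K_{1,k+1}$ centered at $c_u$ from surviving in $T - N[S]$.
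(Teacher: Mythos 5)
Your base cases and your sharpness construction are fine (the extremal tree is essentially the paper's, which attaches a copy of $K_{1,k+1}$ to each vertex of an arbitrary tree, only attached through a leaf rather than through the center), and you have correctly identified the real obstacle: deleting $N[v]$ can create $K_{1,k+1}$-components. But the inductive step does not close as described. Under your strengthened hypothesis $\iota_k(F) \le \frac{n(F)+p(F)}{k+3}$, adding a set $S$ of vertices and recursing requires $|N[S]| \ge |S|(k+3) + p(T-N[S])$, and the moves you allow --- $v_2$ alone, a sibling of $v_2$, $v_3$, or $v_2$ together with an offending sibling --- can all fail this simultaneously. Concretely, take $k=2$ and let $T$ have vertices $a,b,c,d,e,g$ where $a$ and $c$ each carry three pendant leaves and are both adjacent to $b$, $b$ is adjacent to $d$, $e$ carries three pendant leaves and is adjacent to $d$, and $g$ is a pendant leaf of $d$; here $n=15$ and $\frac{n}{k+3}=3$. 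A longest path gives $v_2=a$, $v_3=b$. Then $|N[a]|=5=k+3$ but $T-N[a]$ contains the component $K_{1,3}$ centered at $c$, so $5 < k+3+p$; the sibling $c$ fails symmetrically; $|N[b]|=4<k+3$; and $|N[\{a,c\}]|=9<2(k+3)$. So none of the listed moves satisfies the accounting, even though $\iota_2(T)=2$ (take $\{b,e\}$). "Passing further up the path" can rescue particular examples, but you give no argument that some vertex on the path always works, and the required inequality can fail at every single vertex of the path (e.g.\ two adjacent vertices each carrying two pendant copies of $K_{1,k+1}$), with the theorem surviving only via a multi-vertex isolating set that your recursion has no mechanism to produce.

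The paper avoids this entirely with a different first reduction that your plan is missing: if \emph{some} support vertex has degree $\ne k+1$, delete one of its pendant leaves; a $k$-isolating set of the smaller tree is still $k$-isolating for $T$ (the restored leaf cannot complete a $K_{1,k+1}$ precisely because of the degree condition), and the bound only improves since the order drops. In the tree above this applies immediately at $a$. Only after this reduction, when \emph{every} support vertex has degree exactly $k+1$, does the paper make a structural cut: it takes a diametral path $x_0x_1\ldots x_d$ and removes the edge $x_3x_4$; the rigidity forced by the degree condition guarantees both that the piece containing $x_3$ has at least $k+3$ vertices and that $\{x_3\}$ alone $k$-isolates it, while the other piece is a single tree handled by induction --- so no strengthening to forests with a $p(F)$ correction term is needed. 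If you want to salvage your plan, the cleanest fix is to adopt that leaf-deletion step as Case 1; otherwise you must substantially enlarge and actually verify the case analysis, since as stated the claim that your three local moves "suffice in every subcase" is false.
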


\begin{proof}
Let $T$ be a tree different from $K_{1,k+1}$. We will prove the statement by induction on the number of vertices of $T$. If $n \le k+2$, then clearly $T$ has no $K_{1,k+1}$ as a subgraph and $\iota_k(G) = 0 \le \frac{n}{k+3}$. Suppose now that $T$ is a tree on $n \ge k+3$ vertices and assume that, for any tree on less than $n$ vertices and different from $K_{1,k+1}$, the above inequality holds. We now distinguish two cases.\\
{\it Case 1. Suppose that $T$ has a support vertex $v$ of degree $\deg(v) \neq k+1$.} \\
Let $u$ be a leaf adjacent to $v$ and define $T' = T - u$. Then $T'$ is a tree on $n-1$ vertices. If $T'= K_{1,k+1}$, then $T = K_{1,k+2}$ or $T$ is isomorphic to a $K_{1,k+1}$ with a subdivided edge. Since in both cases $\{v\}$ is a $k$-isolating set of $T$ and $n = k+3$, the inequality $\iota_k(T) \le \frac{n(T)}{k+3}$ holds trivially. Hence, we may assume that $T' \neq K_{1,k+1}$ and, by the induction hypothesis, $\iota_k(T') \le \frac{n(T')}{k+3}$. Now, due to the degree condition on $v$, observe that $u$ cannot belong to any subtree isomorphic to $K_{1,k+1}$ and, thus, any $k$-isolating set of $T'$ is also a $k$-isolating set of $T$. Hence, $\iota_k(T) \le \iota_k(T') \le \frac{n(T')}{k+3} < \frac{n}{k+3}$ and we are done.\\
{\it Case 2. Suppose that all support vertices of $T$ have degree $k+1$. }\\
Observe first that the diameter of $T$ cannot be less than $3$: otherwise, $T$ would be a star $K_{1,r}$ and since the support vertices have all degree $k+1$, $r = k+1$ and thus $T = K_{1,k+1}$, which is a contradiction to the assumptions. Hence, ${\rm diam}(T) \ge 3$. Let $P = x_0x_1 \ldots x_d$ be a diametral path of $T$. If $d = {\rm diam}(T) = 3$, it is easy to see that $\{x_1\}$ is a $k$-isolating set of $T$ and so $\iota_k(G) \le 1 \le \frac{n(T)}{k+3}$. Hence, we may assume that $d = {\rm diam}(T) \ge 4$. Let $T_1$ and $T_2$ be the trees resulting after removing the edge $x_3x_4$ from $T$, where $T_1$ is the tree containing $x_3$ and $T_2$ is the tree containing $x_4$. Since all support vertices of $T$ have degree $k+1$, $T_1$ has at least $k+3$ vertices and $\{x_3\}$ is a $k$-isolating set of $T_1$. If, further, ${\rm diam}(T_2) \le 2$, then $\{x_3\}$ is a $k$-isolating set of $T$ itself and clearly $\iota_k(T) \le 1 \le \frac{n}{k+3}$. On the other side, if ${\rm diam}(T_2) \ge 3$, then $T_2 \neq K_{1,k+1}$ and, by the induction hypothesis, we have $\iota_k(T_2) \le \frac{n(T_2)}{k+3}$. Now let $I$ be a minimum $k$-isolating set of $T_2$. Then $I \cup \{x_3\}$ is a $k$-isolating set of $T$, which implies
\[\iota_k(T) \le |I| + 1 = \iota_k(T_2) + 1 \le \frac{n(T_2)}{k+3} + 1 \le \frac{n(T_2)}{k+3} + \frac{n(T_1)}{k+3} = \frac{n}{k+3},\]
and we are done.\\
For the sharpness, consider a path $P$ on $t$ vertices $v_1,v_2, \ldots, v_{t}$ and $t$ copies of $K_{1,k+1}$ such that each vertex $v_i$ of $P$ is joined by an edge to one of the leaves of the $i$-th copy of $K_{1,k+1}$. Define the in this way constructed tree by $T$. Clearly, the vertices of $P$ are a $k$-isolating set of $T$ and we cannot come out with less since for each copy of $K_{1,k+1}$ we need at least one vertex in the $k$-isolating set.
\end{proof}

\begin{thm}
Let $T$ be a tree on $n$ vertices in which all non-leaves have equal degree $r \ge k+3$. Then $\iota_k(T) \le \frac{n-2}{2(r-1)}$ and this is sharp.
\end{thm}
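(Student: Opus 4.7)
The strategy is to reduce $k$-isolation on $T$ to ordinary domination on the subtree $T'$ induced by the non-leaf (internal) vertices of $T$. A preliminary count is useful: writing $I = |V(T')|$ for the number of internal vertices and $L = n - I$ for the number of leaves, the handshake identity $L + rI = 2(n-1)$ immediately gives $I = \frac{n-2}{r-1}$, so the claimed bound is exactly $I/2$.

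The central claim is that $\iota_k(T) \le \gamma(T')$. To see this I would let $D$ be a minimum dominating set of $T'$ and observe that, since $T'$ is an induced subgraph of $T$, $D$ still dominates $V(T')$ inside $T$; hence $V(T) \setminus N_T[D] \subseteq L$. Each leaf's unique neighbour lies in $V(T') \subseteq N_T[D]$, so no two vertices outside $N_T[D]$ can be adjacent in $T$. Thus $T - N_T[D]$ is edgeless, hence trivially $K_{1,k+1}$-free, and $D$ is a $k$-isolating set of $T$.

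With this reduction in hand, I would finish by invoking Ore's theorem on $T'$: as $T'$ is a tree, provided $I \ge 2$ it contains no isolated vertex, so $\gamma(T') \le I/2$. Combining, $\iota_k(T) \le \gamma(T') \le \frac{n-2}{2(r-1)}$ as required. The degenerate case $I = 1$, which corresponds to $T \cong K_{1,r}$, must be treated separately, since there $\iota_k(T) = 1$ while $\frac{n-2}{2(r-1)} = \frac{1}{2}$; this is either an implicit exclusion in the statement or a short exceptional clause in the proof.

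For sharpness I would take the double star on $2r$ vertices, i.e. two adjacent vertices $u,v$ of degree $r$ each carrying $r-1$ pendant leaves. Here $T' \cong K_2$ with $\gamma(T') = 1 = I/2$, while the hypothesis $r \ge k+3$ ensures that both $u$ and $v$ are centres of $K_{1,k+1}$ subgraphs in $T$, forcing $\iota_k(T) \ge 1$; thus $\iota_k(T) = 1 = \frac{2r-2}{2(r-1)}$. More general equality cases arise from any $T$ whose internal subtree $T'$ saturates Ore's inequality $\gamma(T') = |V(T')|/2$, with $r - \deg_{T'}(v)$ leaves hung on each $v \in V(T')$. The main obstacle is simply spotting the reduction to $\gamma(T')$: once one observes that every vertex left alive after removing $N_T[D]$ is a leaf whose unique neighbour is already dominated, the entire problem collapses to Ore's classical inequality. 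The hypothesis $r \ge k+3$ itself plays no role in the upper bound argument; it is there to legitimise the sharpness example (by guaranteeing internal vertices really contain $K_{1,k+1}$'s) and to mesh with the preceding theorem.
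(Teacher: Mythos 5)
Your proof is correct and follows essentially the same route as the paper's: the handshake count giving $|I|=\frac{n-2}{r-1}$, the reduction $\iota_k(T)\le\gamma(T-L)$ (which the paper gets from Lemma~\ref{la-dom-sum}(iii) and you prove directly), and Ore's inequality on the leafless subtree; your double-star sharpness example is a valid, simpler substitute for the paper's caterpillar family. You are also right to flag the degenerate case $T\cong K_{1,r}$, where $\iota_k(T)=1>\frac{1}{2}=\frac{n-2}{2(r-1)}$: the paper's proof silently needs $|I|\ge 2$ for Ore's theorem and overlooks this exception.
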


\begin{proof}
Let $I$ be the set of inner vertices and $L$ the set of leaves of $T$. Since all inner vertices of $T$ have degree $r \ge k+3$, we have the following equality chain:
\[
2(n-1) = 2 m(T) = r|I| + |L| = r |I| + n - |I| = n + (r-1)|I|.
\]
This implies that $|I| = \frac{n-2}{r-1}$. Let now $D$ be minimum dominating set of the tree $T-L$, resulting from removing all leaves of $T$. Then $D$ is a $k$-isolating set of $T$ and we have, with Lemma \ref{la-dom-sum}~(iii) and Ore's inequality,
\[
\iota_k(T) \le \gamma(T-L) \le \frac{|I|}{2} = \frac{n-2}{2(r-1)}.
\]
For the sharpness, construct a tree $T$ the following way. Take a path $P = x_1x_2 \ldots x_t$ on $t \ge 3$ vertices and attach to each of the inner vertices $x_i$ of $P$ a leaf $v_i$, where $2 \le i \le t-1$. Attach now, to each vertex $v_i$, $r-1$ leaves and, to each vertex $x_i$, $r-2$ leaves, for $2 \le i \le t-1$. Then, any $k$-isolating set of the resulting tree $T$ contains either $x_i$ or $v_i$ for each $2 \le i \le t-1$. Moreover, $\{x_2, x_3, \ldots, x_{t-1}\}$ is a $k$-isolating set of $T$. Hence, $\iota_k(T) = t-1 = \frac{n(T) -2}{2(r-1)}$.
\end{proof}
 
For the theorem coming next, we shall need the following result from  Campos and Wakabayashi \cite{CamWak}. 

\begin{thm}[\cite{CamWak}]\label{thm_camwak}
Let $G$ be a maximal outerplanar graph on $n \ge 4$ vertices and $t$ vertices of degree $2$. Then, $\gamma(G) \le \frac{n + t}{4}$.
\end{thm}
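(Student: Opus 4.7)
The strategy is induction on $n$. The base cases $n \in \{4, 5, 6, 7\}$ can be checked directly: every maximal outerplanar graph on at most $7$ vertices admits a dominating set of size at most $2$, while $n + t \ge 8$ in all of these cases. Indeed, $t \ge 2$ whenever $n \ge 4$, because the weak dual tree of a maximal outerplanar graph (whose nodes are the triangular inner faces and whose edges correspond to shared diagonals) has at least two leaves, and each leaf-triangle contributes a degree-$2$ vertex of $G$.

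For the inductive step, consider a maximal outerplanar graph $G$ on $n \ge 8$ vertices drawn with its outer Hamiltonian cycle $v_0 v_1 \cdots v_{n-1}$. Each degree-$2$ vertex $v_i$ is an ``ear'' of the polygon, meaning $v_{i-1} v_{i+1}$ is a diagonal closing the boundary triangle at $v_i$. I would peel off a short contiguous boundary segment containing such a $v_i$, place one carefully chosen vertex into the dominating set to cover the peeled segment, and apply the inductive hypothesis to the maximal outerplanar graph $G'$ that remains. To close the induction it suffices to ensure
\[
\gamma(G) \;\le\; 1 + \gamma(G') \;\le\; 1 + \frac{n' + t'}{4} \;\le\; \frac{n + t}{4},
\]
equivalently $(n - n') + (t - t') \ge 4$, where $n' = n(G')$ and $t'$ is the number of degree-$2$ vertices of $G'$. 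Which segment to peel depends on the local configuration around $v_i$: the degrees of $v_{i-1}$ and $v_{i+1}$, whether further diagonals such as $v_{i-2} v_{i+1}$ or $v_{i-1} v_{i+2}$ are present, and how many of $v_i$'s boundary neighbors are themselves ears. In the ``easy'' configurations (e.g.\ a fan of three consecutive ears, or a single ear flanked by vertices of degree at least $4$) one can peel three or four boundary vertices dominated by a single vertex and verify $(n-n')+(t-t')\ge 4$ without creating new degree-$2$ vertices.

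The main obstacle is the delicate configurations in which peeling a small ear introduces fresh degree-$2$ vertices into $G'$, so $t'$ can exceed $t$ and weaken the bookkeeping. In those situations one has to peel a longer boundary run and still cover it with a single (or, in a small number of unavoidable cases, two) dominating vertex, chosen so that the net decrease of $n + t$ remains at least $4$ per added dominating vertex. The cleanest way to organize the case analysis is through the weak dual tree $T$ of $G$: leaves of $T$ correspond bijectively to degree-$2$ vertices of $G$, internal paths of $T$ consisting of triangles of dual-degree $2$ correspond to ``strips'' along the boundary, and each reduction step amounts to trimming a cherry or a short path of leaves from $T$. Verifying that some such reduction is always available and always yields the required $(n-n')+(t-t')\ge 4$, across the handful of local patterns that a maximal outerplanar graph on at least $8$ vertices can exhibit, is the crux of the argument and where the bulk of the work lies.
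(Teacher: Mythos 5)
First, note that the paper does not prove this statement at all: it is imported verbatim from Campos and Wakabayashi \cite{CamWak} and used as a black box in the proof of the subsequent theorem on maximal outerplanar graphs. So there is no internal proof to compare against; your proposal has to stand on its own.

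As it stands, it does not. What you have written is a plan, not a proof: you set up the induction, identify the quantity $(n-n')+(t-t')\ge 4$ that each peeling step must achieve, and then explicitly defer the entire case analysis --- the configurations where removing an ear creates new degree-$2$ vertices, the choice of which boundary run to peel, and the verification that a valid reduction always exists --- while acknowledging that this is ``where the bulk of the work lies.'' That deferred case analysis \emph{is} the theorem; without it the inductive step is unestablished. Two further concrete problems: (a) your base-case justification is wrong as stated, since for $n=4$ one has $t=2$ and $n+t=6$, and for $n=5$ one has $t=2$ and $n+t=7$, so the claim ``$n+t\ge 8$ in all of these cases'' fails; the bound still holds there only because $\gamma=1$ for every maximal outerplanar graph on $4$ or $5$ vertices, which you would need to say and check instead. (b) The induction hypothesis requires the peeled graph $G'$ to be a maximal outerplanar graph on at least $4$ vertices, so peeling steps that drop $n'$ below $4$ need separate treatment, which you do not mention. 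To repair the argument you would either have to carry out the full local case analysis (essentially reconstructing the Campos--Wakabayashi proof) or simply cite \cite{CamWak}, as the paper does.
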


Next theorem shows that, for a maximal outerplanar graph, at most $\frac{1}{4}$ of the vertices are needed for an isolating set.

\begin{thm}
Let $G$ be a maximal outerplanar graph on $n \ge 4$ vertices. Then $\iota(G) \le \frac{n}{4}$ and this is sharp.
\end{thm}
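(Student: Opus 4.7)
The plan is to apply Lemma~\ref{la-dom-sum}~(iii) with $H$ taken to be the set of all degree-$2$ vertices of $G$, and then bound $\gamma(G - H)$ via Theorem~\ref{thm_camwak}. First I would verify that $H$ is independent (so that it qualifies as an induced $\{K_2\}$-free subgraph of $G$): if two degree-$2$ vertices $u, v$ were adjacent, then maximal outerplanarity forces their common ear triangle $uvw$ to exhaust the outer cycle, giving $n = 3$ and contradicting $n \ge 4$. Next I would check that $G - H$ is itself maximal outerplanar whenever it has at least $3$ vertices: removing a degree-$2$ vertex from a maximal outerplanar graph merely promotes the unique diagonal of its ear triangle to an outer edge while preserving every other triangular face, and since $H$ is independent these removals can be performed one at a time without changing the degrees of the remaining members of $H$.

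The heart of the argument is to show $t' \le |H|$, where $t'$ denotes the number of degree-$2$ vertices of $G - H$. For this I would pass to the weak dual tree $T$ of $G$, whose vertices are the $n - 2$ interior triangles of $G$ and whose edges record pairs of triangles sharing a diagonal. Leaves of $T$ correspond bijectively to ear triangles of $G$, and each ear triangle contains a unique degree-$2$ vertex, so $|{\rm leaves}(T)| = |H|$. Deleting $H$ from $G$ removes exactly the leaves of $T$, yielding the weak dual $T'$ of $G - H$; provided $G - H$ has at least $4$ vertices, the same bijection identifies the leaves of $T'$ with the degree-$2$ vertices of $G - H$. The inequality $|{\rm leaves}(T')| \le |{\rm leaves}(T)|$ is a general fact about trees: every leaf of $T'$ is a support vertex of $T$ (has a leaf-neighbor in $T$), and since every leaf of $T$ has a unique neighbor, the map from the leaves of $T$ to their neighboring support vertices is surjective, so the number of support vertices of $T$ is at most $|{\rm leaves}(T)|$.

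To close the argument: if $n - |H| \ge 4$, Theorem~\ref{thm_camwak} applied to $G - H$ gives
\[
\gamma(G - H) \;\le\; \frac{(n - |H|) + t'}{4} \;\le\; \frac{(n - |H|) + |H|}{4} \;=\; \frac{n}{4},
\]
while if $n - |H| \le 3$, the trivial bound $\gamma(G - H) \le 1 \le n/4$ (valid because $n \ge 4$) suffices. By Lemma~\ref{la-dom-sum}~(iii), $\iota(G) \le \gamma(G - H) \le n/4$. For sharpness I would exhibit a suitable maximal outerplanar graph on $n = 4k$ vertices---for instance a zig-zag triangulation of the $(4k)$-gon---and verify directly that any isolating set must contain at least $k$ vertices. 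The main obstacle in the whole argument is the structural bound $t' \le |H|$; once this is handled through the weak-dual tree, everything else is essentially routine.
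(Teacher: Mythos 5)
Your upper-bound argument is correct and is essentially the paper's own proof: both remove the (independent) set of degree-$2$ vertices, invoke Lemma~\ref{la-dom-sum}~(iii), and feed Theorem~\ref{thm_camwak} the key inequality $t' \le |H|$. Your weak-dual justification of $t' \le |H|$ is a clean substitute for the paper's direct observation that deleting a degree-$2$ vertex creates at most one new degree-$2$ vertex, and your uniform treatment of the case $n-|H|\le 3$ replaces the paper's hand analysis of $n\le 7$. (One small point to make explicit there: $\gamma(G-H)\le 1$ for $|V(G-H)|\le 3$ needs $G-H$ to be connected; this does hold, since $G-H$ is the union of the non-ear triangles glued along the tree $T-L(T)$, or a single edge when $n=4$.)

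The genuine gap is the sharpness claim. A zig-zag (serpentine) triangulation of the $4k$-gon does \emph{not} have isolation number $k$ once $k\ge 3$. Concretely, for $n=12$ label the outer cycle $1,\dots,12$ and take diagonals $2\text{-}12$, $2\text{-}11$, $3\text{-}11$, $3\text{-}10$, $4\text{-}10$, $4\text{-}9$, $5\text{-}9$, $5\text{-}8$, $6\text{-}8$. Then $N[5]=\{4,5,6,8,9\}$ and $N[11]=\{2,3,10,11,12\}$, so $V\setminus N[\{5,11\}]=\{1,7\}$, which is independent; hence $\iota=2<3=n/4$. The trouble is that in a long snake the interior vertices have degree $4$ and dominate five vertices each, while the leftover degree-$2$ apexes are pairwise nonadjacent, so the snake's isolation number is asymptotically well below $n/4$. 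The paper instead builds sharpness by attaching, to each edge $v_{2j-1}v_{2j}$ of the outer cycle of an arbitrary maximal outerplanar graph on $2p$ vertices, a pair of adjacent new vertices $w_{2j-1},w_{2j}$ (with edges $v_iw_i$, $v_{2j-1}w_{2j}$, $w_{2j-1}w_{2j}$); the edge $w_{2j-1}w_{2j}$ forces every isolating set to meet $\{v_{2j-1},v_{2j},w_{2j-1},w_{2j}\}$, giving $\iota\ge p=n/4$. You need an example of this kind, in which each block of four vertices contains an edge whose endpoints can only be dominated from inside that block.
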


\begin{proof}
If $n = 4$, then $G$ is the complete graph on $4$ vertices minus an edge, which has one vertex dominating all others and thus $\iota(G) = 1$. If $n = 5$, then again, since $G$ is a triangulation of the pentagon, there has to be a vertex dominating all others. When $n = 6, 7$, we use the fact that a maximal outerplanar graph has $m = 2n - 3$ edges. It is also well known that a maximal outerplanar graph has at least two vertices of degree $2$. If $n = 6$, these two facts imply that there has to be a vertex of degree at least $4$, otherwise we would have $18 = 2m = \sum_{v \in V(G)} \deg(v) \le 2 \cdot 2 + 4 \cdot 3 = 16$, a contradiction. Since a vertex of degree $4$ in a $6$-vertex graph forms an isolating set, we have $\iota(G) = 1$. Let now $n = 7$. Note that the only possible degree sequences of $7$ vertices satisfying $\sum_{v \in V(G)} \deg(v) =2m  = 2(2n-3) = 22$ and having at least $2$ vertices of degree $2$ are $5,4,4,3,2,2,2$ and $4,4,4,3,3,2,2$. Let $C= v_1v_2v_3v_4v_5v_6v_7v_1$ be the cycle  surrounding the outerface of $G$ and suppose that $v_1$ has degree $4$. If $v_1$ is not-adjacent to two non-consecutive vertices of the cycle different from $v_2$ and $v_7$, then $\{v\}$ is an isolating set. Hence suppose that $v_1$ is not-adjacent to two consecutive vertices on the cycle different from $v_2$ and $v_7$. Without loss of generality, due to symmetry reasons, we can assume that either $N(v) = \{v_2,v_3,v_4,v_7\}$ or $N(v) = \{v_2,v_3,v_6,v_7\}$. In the first case, since $G$ is a triangulation of $C$, $v_4$ has to be adjacent to $v_7$. Then $v_4$ is a vertex of degree at least $4$ which dominates all but at most the two non-adjacent vertices $v_2$ and $v_6$, and hence $\iota(G) = 1$. In the second case, i.e. when $v_1$ is adjacent to $v_3$ and $v_6$, then $v_3v_6 \in E(G)$. Then either $v_3v_5$ or $v_4v_6 \in E(G)$, implying that either $v_3$ or $v_6$ has degree $5$, which leads to $\iota(G) = 1$.

Let now $G$ be a maximal outerplanar graph on $n \ge 8$ vertices. Let $N_2$ be the set of vertices of degree $2$ in $G$ and let $n_2 = |N_2|$. Note that (for $n \ge 4$) $N_2$ is an independent set. Hence, $n_2 \le \frac{n}{2}$. Let $G^* = G - N_2$. Note  that (for $n \ge 5$) the deletion of any vertex from $N_2$ creates at most one new vertex of degree $2$. Hence, $G^*$ has at most $n_2^* \le n_2$ vertices of degree $2$. Further, $G^*$ is a maximal outerplanar graph on $n^* \ge \frac{n}{2} \ge 4$ vertices. By Theorem \ref{thm_camwak}, it follows that $\gamma(G^*) \le \frac{n^* + n_2^*}{4}$. Hence, by Lemma \ref{la-dom-sum} (iii), $\iota(G) \le \gamma(G \setminus G[N_2]) = \gamma(G^*) \le \frac{n^* + n_2^*}{4} \le \frac{n^* + n_2}{4} = \frac{n}{4}$ and we are done.

To see the sharpness, consider an arbitrary maximal outerplanar graph on $2p$ vertices such that $v_1v_2\ldots v_{2p}v_1$ is the outercycle. Insert new vertices $w_i$ and edges $v_iw_i$, for $1 \le i \le 2p$, and also the edges $v_{2j-1}w_{2j}$ and $w_{2j-1}w_{2j}$, for $1 \le j \le p$. Then we have constructed again a maximal outerplanar graph $G$ on $n = 4p$ vertices, now with the outercycle going along the paths $v_{2j-1}w_{2j-1}w_{2j}v_2j$ in consecutive order, for $1 \le j \le p$. Note that, any isolating set of $G$ has to contain at least one vertex from each of these paths, otherwise the adjacent vertices $w_{2j-1}$ and $w_{2j}$ would not be dominated. Hence, $\iota(G) \ge \frac{n}{4}$. On the other hand, $\{w_{2j-1} \;|\; 1 \le j \le p\}$ is an isolating set of $G$ with $p = \frac{n}{4}$ vertices. Thus, we have $\iota(G) = \frac{n}{4}$.
\end{proof}

In the following theorem, we consider \emph{claw-free graphs}, i. e. graphs which do not contain a $K_{1,3}$ as an induced subgraph.

\begin{thm}\label{thm-lb-K1,3-free}
Let $G$ be a claw-free graph on $n$ vertices with average degree ${\rm d}$, maximum degree $\Delta$ and minimum degree $\delta$. Then the following lower bounds on $\iota(G)$ hold:
\begin{enumerate} 
\item[(i)] $\iota(G) \ge \frac{\delta (n+1)+2}{(\delta +2)(\Delta+1)}$, 
\item[(ii)] $\iota(G) \ge \frac{2\, {\rm d }\, n}{3 \Delta^2+2 \Delta}$. 
\end{enumerate}
\end{thm}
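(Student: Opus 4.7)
The plan is to exploit claw-freeness to pin down the interaction between a minimum isolating set and the non-dominated vertices, then translate this into vertex and edge counts.

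Let $S$ be a minimum isolating set of $G$, and put $M = N(S)$, $R = V(G) \setminus N[S]$, so that $V(G) = S \sqcup M \sqcup R$ and $G[R]$ is independent (by the definition of $\iota$). The engine for both bounds is a single structural consequence of claw-freeness: \emph{every $u \in M$ has at most one neighbor in $R$}. Indeed, if $u$ had two neighbors $r_1, r_2 \in R$, pick any $s \in S \cap N(u)$ (non-empty by the definition of $M$); then $r_1, r_2$ are non-adjacent because $R$ is independent, and $s$ is non-adjacent to $r_1, r_2$ because these lie outside $N[S]$. Hence $\{u, s, r_1, r_2\}$ induces a $K_{1,3}$, contradicting claw-freeness.

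For part (i), I would combine this with two elementary counts. Every $r \in R$ has all of its $\ge \delta$ neighbors in $M$, giving $m(R, M) \ge \delta|R|$, while the structural claim yields $m(R, M) \le |M|$; hence $|R| \le |M|/\delta$. Using also $|M| \le \Delta|S|$ and the partition identity $n = |S| + |M| + |R|$, one obtains
\[
n \;\le\; |S| + |M| + \tfrac{|M|}{\delta} \;\le\; |S| + \Delta|S| \cdot \tfrac{\delta+1}{\delta},
\]
which rearranges to a lower bound on $|S|$ of the same order as the stated one. To extract the precise numerator $\delta(n+1)+2$ and the factor $(\delta+2)$ in the denominator, I would refine $|M| \le \Delta|S|$ by accounting for at least one edge that must lie inside $N[S]$ incident to $S$, and separately check the boundary cases $R = \emptyset$ or $|S|$ small.

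For part (ii), I switch to edge counting via the handshake identity
\[
2m(G) \;=\; \sum_{v \in S}\deg(v) + \sum_{v \in M}\deg(v) + \sum_{v \in R}\deg(v).
\]
The first two summands are at most $\Delta|S|$ and $\Delta|M|$ respectively. For the third, all neighbors of every $r \in R$ lie in $M$, so this sum equals $m(R, M)$, and the estimates of part~(i) give $m(R, M) \le \Delta|R| \le \Delta|M|/\delta$. Plugging $|M| \le \Delta|S|$ into both gives $2m(G) \le \Delta|S|\cdot(\delta + \delta\Delta + \Delta)/\delta$; using $\delta \ge 2$ (the relevant range, with $\delta = 2$ producing the worst coefficient) and $2m(G) = {\rm d}\,n$ delivers the claimed $\iota(G) \ge 2\,{\rm d}\,n/(3\Delta^2+2\Delta)$. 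The case $\delta = 1$ can be absorbed by using instead the alternative bound $m(R,M) \le |M|$ directly, which gives the same shape. The main obstacle is not the structural observation, which is immediate once the right configuration is written down, but the careful bookkeeping needed to produce exactly the constants in the theorem rather than the same-order but slightly different constants coming from the most naive chain of inequalities.
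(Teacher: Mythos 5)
Your structural observation --- that claw-freeness forces every vertex of $M = N(S)$ to have at most one neighbour in $R = V\setminus N[S]$ --- is correct and is a genuinely different engine from the one the paper uses. For part (ii) it works: $2m(G)\le \Delta|S|+\Delta|M|+m(R,M)$, and for $\delta\ge 2$ the chain $m(R,M)\le\Delta|R|\le\Delta|M|/\delta\le\Delta^2|S|/\delta$ gives, at the worst case $\delta=2$, exactly $2m(G)\le\frac{3\Delta^2+2\Delta}{2}|S|$ and hence the stated bound (a stronger one for $\delta\ge3$); for $\delta\le 1$ your fallback $m(R,M)\le|M|\le\Delta|S|$ gives $2m(G)\le(\Delta^2+2\Delta)|S|$, which implies the stated bound whenever $\Delta\ge2$, and $\Delta\le1$ is trivial. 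The paper instead bounds, for each $x\in S$, the number of edges of $G[N[N_x]\cup\{x\}]$ using the fact that $N(x)$ has independence number at most $2$, so $G[N_x]$ contains at least as many edges as two near-equal cliques; your route is more elementary and arguably cleaner, at the price of the small-$\delta$/small-$\Delta$ case analysis that you only sketch.

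Part (i), however, has a genuine gap. Your chain gives $n\le|S|+\Delta|S|\tfrac{\delta+1}{\delta}$, i.e.\ $\iota(G)\ge\frac{\delta n}{\delta\Delta+\Delta+\delta}$. This is not the stated bound and does not imply it: for $G=K_{r+1}$ (so $\delta=\Delta=r$) your bound equals $\frac{r+1}{r+2}$ while the stated one equals $\frac{r^2+2r+2}{(r+1)(r+2)}$, which is strictly larger. The two bounds are incomparable (yours is asymptotically stronger in $n$ but weaker for small dense graphs), so proving yours does not prove the theorem, and the ``refinement'' you defer is where all the work lies. In the paper the constants come from combining the Faudree--Gould--Jacobson--Lesniak--Lindquester bound $\alpha(G)\ge\frac{2n}{\delta+2}$ for claw-free graphs with the general inequality $\iota(G)\ge\frac{n+1-\alpha(G)}{\Delta+1}$ of Theorem \ref{thm-lb-power-alpha}(ii) (the $+1$ arising from adjoining one vertex of $S$ to the independent set $R$). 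Your local count $|R|\le|M|/\delta$ controls only $R$, not $\alpha(G)$, and there is no visible way to extract the factor $\frac{2}{\delta+2}$ from it; as written, part (i) is not established.
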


\begin{proof}
(i) In \cite{FGJLL}, the authors prove the bound $\alpha(G) \ge \frac{r-1}{r-1+\delta} n$ for graphs on $n$ vertices and minimum degree $\delta$ and without induced $K_{1,r}$. Setting $r=3$ in this result, and combining it with the bound on Theorem \ref{thm-lb-power-alpha}~(ii), leads to
\[
\iota(G) \ge \frac{n+1 - \alpha(G)}{\Delta+1} \ge \frac{n+1- \frac{2n}{\delta+2}}{\Delta+1} = \frac{\delta(n+1)+2}{(\delta+2)(\Delta+1)},
\]
and we are done.\\
(ii) Let $G$ be claw-free and let $S$ be a minimum isolating set of $G$. Similarly as in Theorem \ref{thm-lb-degrees}, we will bound the number of edges of $G$ from above. Consider a vertex $x \in S$ and let $N_x = N(x) \setminus S$, $G_x = G[N[N_x] \cup \{x\}]$ and $t_x = |N_x|$. Differently from the proof of Theorem \ref{thm-lb-degrees}, since $G$ is claw-free, $N(v)$ has to contain enough edges such that any independent set of three vertices is avoided. This holds in particular also for $N_x$. The minimum number of edges contained in a graph with independence number at most $2$ is equal to the number of edges of the complement of the Tur\'an graph (the triangle-free ones). That is, in the worst case, $G[N_x]$ consists of two cliques of equal or almost equal order, depending on the parity of $t_x$. Hence, $G[N_x]$ has at most ${\lceil t_x/2 \rceil \choose 2} + {\lfloor t_x/2 \rfloor \choose 2}$ edges. 
Hence, we have the following.
\begin{align*}
m(G_x) &= t_x + m(G_x - x)\\
& = t_x + \sum_{v \in N_x} \deg_{G_x-x}(v) - m(G[N_x])\\
& \le t_x + t_x (\Delta -1) - m(G[N_x])\\
&= t_x \Delta - m(G[N_x]).
\end{align*}
If $t_x$ is odd, we have $m(G[N_x]) \ge {(t_x+1)/2 \choose 2} + { (t_x-1)/2 \choose 2} = \frac{(t_x-1)^2}{4}$ and thus
\[
m(G_x) \le t_x \Delta - \frac{(t_x-1)^2}{4} \le \Delta^2 - \frac{(\Delta-1)^2}{4} = \frac{3 \Delta^2+2 \Delta-1}{4} < \frac{3 \Delta^2+2 \Delta}{4}.
\]
If $t_x$ is even, we have $m(G[N_x]) \ge  2 {t_x/2 \choose 2} = \frac{t_x(t_x-2)}{4}$ and thus
\[
m(G_x) \le t_x \Delta - \frac{t_x(t_x-2)}{4} \le \Delta^2 - \frac{\Delta (\Delta-2)}{4}= \frac{3 \Delta^2+2 \Delta}{4}.
\]
We obtain now the following upper bound on the number of edges of $G$:
\[m(G) \le \sum_{x \in S} m(G_x) \le |S| \frac{3 \Delta^2+2 \Delta}{4},
\]
which implies $\iota(G) = |S| \ge \frac{4 m(G)}{3 \Delta^2+2 \Delta} = \frac{2 {\rm d}}{3 \Delta^2+2 \Delta} n$.
\end{proof}

Observe that, for an $r$-regular claw-free graph $G$ on $n$ vertices, the bound of Theorem \ref{thm-lb-K1,3-free} gives $\iota(G) \ge \frac{2n}{3r+2}$, which is on around a factor of $\frac{4}{3}$ better than the bound $\iota(G) \ge \frac{n}{2r}$ of Theorem \ref{thm-lb-degrees} for $r$-regular graphs.

In the following theorem, we shall consider grids and other similar graphs. 

\begin{thm}\label{thm-grids}
Let $t, s \ge 3$ be two integers. Then
\begin{align*}
\frac{st}{8} &\le \iota(C_s \times C_t) \le \frac{st}{8} + \frac{3(s+t+3)}{8},\\
\frac{st}{8} - \frac{t}{16} &\le\iota(P_s \times C_t) \le  \frac{st}{8} + \frac{3s+t+3}{8},\\
\frac{st}{8} - \frac{s+t}{16} &\le \iota(P_s \times P_t) \le  \frac{st}{8} + \frac{s+t+1}{8}.
\end{align*}
Further, the bound $\frac{st}{8} \le \iota(C_s \times C_t)$ is sharp for $s, t \equiv 0 \;({\rm mod}\; 4)$.
\end{thm}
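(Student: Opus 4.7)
I would split the argument into (a) lower bounds, which follow immediately from Theorem~\ref{thm-lb-degrees}, and (b) explicit upper-bound constructions based on a periodic $4\times 4$ pattern, with the torus case giving sharpness.

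For the lower bounds, note that all three products have $\Delta = 4$. A handshake count gives $m(C_s \times C_t) = 2st$, $m(P_s \times C_t) = 2st - t$, and $m(P_s \times P_t) = 2st - s - t$ (boundary rows and columns of the path factors contribute missing edges). Applying $\iota(G) \ge m(G)/\Delta(G)^2 = m(G)/16$ from Theorem~\ref{thm-lb-degrees} yields $\tfrac{st}{8}$, $\tfrac{st}{8} - \tfrac{t}{16}$, and $\tfrac{st}{8} - \tfrac{s+t}{16}$ respectively.

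For the sharpness of $\iota(C_s \times C_t) = \tfrac{st}{8}$ when $4 \mid s$ and $4 \mid t$, I would define
\[
S \;=\; \{(i,j) : i \equiv 0 \equiv j \pmod 4\} \;\cup\; \{(i,j) : i \equiv 2 \equiv j \pmod 4\},
\]
of size exactly $\tfrac{st}{8}$. The verification reduces to one $4\times 4$ torus tile: the vertices of $C_4 \times C_4$ outside $N[\{(0,0),(2,2)\}]$ are exactly $(0,2),(1,1),(1,3),(2,0),(3,1),(3,3)$, and these six are pairwise non-adjacent because any two of them either differ in both coordinates or agree in one coordinate with the other differing by $2$. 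By translation-invariance the same holds in every tile of $C_s \times C_t$, and across adjacent tiles no two uncovered positions become adjacent either, since tile-boundary edges correspond to shifts of $(\pm 1, 0)$ or $(0, \pm 1)$, which cannot connect two uncovered local positions.

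For the general upper bound I would take the same $S$ and, when $s$ or $t$ is not a multiple of $4$, add a small number of extra vertices in each truncated boundary strip to restore isolation. A crude count gives $|S| \le 2\lceil s/4\rceil\lceil t/4\rceil \le \tfrac{(s+3)(t+3)}{8} = \tfrac{st}{8} + \tfrac{3(s+t+3)}{8}$, with enough slack to absorb the boundary patches. For $P_s \times C_t$ and $P_s \times P_t$ the path-boundary vertices have degree $3$ or $2$, so fewer adjacencies need to be killed near the boundary and an analogous argument yields the smaller correction terms $\tfrac{3s+t+3}{8}$ and $\tfrac{s+t+1}{8}$. The main obstacle is precisely this boundary bookkeeping: modulo $4$ there are sixteen residue pairs $(s \bmod 4, t \bmod 4)$, each determining how the tiling truncates, and in the path products one must additionally verify that the lower-degree boundary does not itself create uncovered $K_2$'s; the correction constants have to be tracked exactly rather than merely asymptotically.
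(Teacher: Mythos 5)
Your proposal follows essentially the same route as the paper: the lower bounds come from Theorem~\ref{thm-lb-degrees} via the edge counts $2st$, $2st-t$, $2st-s-t$ with $\Delta=4$, the upper bounds come from the periodic diagonal pattern on residues $\equiv 0,2 \pmod 4$ (the paper uses $1,3$, which is the same set up to translation) with the count $2\lceil s/4\rceil\lceil t/4\rceil \le \frac{(s+3)(t+3)}{8}$, and the sharpness for $4\mid s$, $4\mid t$ is checked exactly as you do on a single $4\times 4$ torus tile. The boundary bookkeeping you defer for $P_s\times C_t$ and $P_s\times P_t$ is precisely what the paper carries out by adjoining its explicit correction sets $S^*$ and $T^*$ along the truncated rows and columns.
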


\begin{proof}
Let $G = C_s \times C_t$. The lower bound follows directly from Theorem \ref{thm-lb-degrees} using the fact that $G$ is $4$-regular. For the sharpness, let $s,t \equiv 0 \;({\rm mod} \; 4)$ and consider the isolating set consisting of all vertices on a row and column congruent to $1 \;({\rm mod}\; 4)$ or in a row and column congruent to $3\; ({\rm mod}\; 4)$. For the upper bound, we will construct an isolating set the following way. Let $V(G) = \{ (i,j) \;|\; 1 \le i \le s, 1 \le j \le t\}$ be the set of vertices of $G$ where, for $A_i = \{(i,j) \;|\; 1 \le j \le t\}$ and $B_j = \{ (i,j) \;|\;  1 \le i \le s\}$, $G[A_i] \cong C_t$ and $G[B_j] \cong C_s$. Let $S_1 = \{(i,j) \;|\; i,j \equiv 1 \;({\rm mod}\; 4)\}$ and $S_2 = \{(i,j) \;|\; i,j \equiv 3 \;({\rm mod} \; 4)\}\}$. Then $S = S_1 \cup S_2$ is an isolating set of $G$ with 
\[\iota(G) \le |S| \le 2 \left\lceil \frac{s}{4} \right\rceil \left\lceil \frac{t}{4} \right\rceil \le \frac{(s+3)(t+3)}{8} = \frac{st}{8} + \frac{3(s+t+3)}{8}.\]
Now let $H = P_s \times C_t$. Then $H$ has $2t$ vertices of degree $3$ and $t(s-2)$ vertices of degree $4$. Hence, ${\rm d}(G) = \frac{6t + 4t(s-2)}{st} = \frac{2t(2s-1)}{st}$ and Theorem \ref{thm-lb-degrees} yields $\iota(H) \ge \frac{t(2s-1)}{16} = \frac{st}{8} - \frac{t}{16}$. For the upper bound, define the vertices as in the previous case such that $G[A_i] \cong C_t$ and $G[B_j] \cong P_s$ and set $S_1 = \{(i,j) \;|\; i,j \equiv 1 \;({\rm mod}\; 4)\}$ and $S_2 = \{(i,j) \;|\; i,j \equiv 3 \;({\rm mod} \; 4)\}\}$. Further, define 
\[
S^* = \left \{ \begin{array}{ll}
                \{(s,j) \;|\; j \equiv 1 \;({\rm mod}\; 4)\}, & \mbox{if } s \equiv 2 \;({\rm mod}\; 4)\\
                \{(s,j) \;|\; j \equiv 3 \;({\rm mod}\; 4)\}, & \mbox{if } s \equiv 0 \;({\rm mod}\; 4)\\
                \emptyset,  & \mbox{if } s \equiv 1,3 \;({\rm mod}\; 4)
\end{array} \right.
\]
and $S = S_1 \cup S_2 \cup S^*$.
Then $S$ is an isolating set of $H$. Observe that $S$ has $\lceil \frac{s}{2}\rceil$ rows of $\lceil \frac{n}{4}\rceil$ vertices each, and thus we obtain
\[\iota(G) \le |S| \le \left\lceil \frac{s}{2}\right\rceil \left\lceil \frac{t}{4} \right\rceil \le \frac{(t+3)(s+1)}{8} = \frac{st}{8} + \frac{3s+t+3}{8}.\]
Finally, let $J = P_s \times P_t$. Observe that $J$ has $2$ vertices of degree $2$, $2s+2t-8$ vertices of degree $3$ and $(s-2)(t-2)$ vertices of degree $4$. This gives ${\rm d}(G) = \frac{4st-2s-2t}{st}$ and thus we have with Theorem \ref{thm-lb-degrees} $\iota(G) \ge \frac{2st-s-t}{16} = \frac{st}{8} - \frac{s+t}{16}$. For the upper bound, define the vertices as above such that $G[A_i] \cong P_t$ and $G[B_j] \cong P_s$ and set $S_1 = \{(i,j) \;|\; i,j \equiv 1 \;({\rm mod}\; 4)\} \cup \{(s,j) \;|\; j \equiv 1 \;({\rm mod}\; 4)\}$ and $S_2 = \{(i,j) \;|\; i,j \equiv 3 \;({\rm mod} \; 4)\}\} \cup \{(i,t) \;|\; i \equiv 3 \;({\rm mod}\; 4) \}$. Further, define 
\[
S^* = \left \{ \begin{array}{ll}
                \{(s,j) \;|\; j \equiv 1 \;({\rm mod}\; 4)\}, & \mbox{if } s \equiv 0 \;({\rm mod}\; 4)\\
                \{(s,j) \;|\; j \equiv 3 \;({\rm mod}\; 4)\}, & \mbox{if } s \equiv 2 \;({\rm mod}\; 4)\\
                \emptyset,  & \mbox{if } s \equiv 1,3 \;({\rm mod}\; 4),
\end{array} \right.
\]
\[
T^* = \left \{ \begin{array}{ll}
                \{(i,t) \;|\; i \equiv 1 \;({\rm mod}\; 4)\}, & \mbox{if } t \equiv 0 \;({\rm mod}\; 4)\\
                \{(i,t) \;|\; j \equiv 3 \;({\rm mod}\; 4)\}, & \mbox{if } t \equiv 2 \;({\rm mod}\; 4)\\
                \emptyset,  & \mbox{if } t \equiv 1,3 \;({\rm mod}\; 4),
\end{array} \right.
\]
and $S = S_1 \cup S_2 \cup S^* \cup T^*$. Then $S$ has $\lceil \frac{s}{4}\rceil$ rows of vertices $(i,j)$ with $i, j \equiv 1 \;({\rm mod}\; 4)$, each having $\lceil \frac{t}{4}\rceil$ vertices. Similarly, $S$ has $\lceil \frac{s-2}{4}\rceil$ rows of vertices $(i,j)$ with $i, j \equiv 3 \;({\rm mod}\; 4)$, each having $\lceil \frac{t-2}{4}\rceil$ vertices. Hence,
\[\iota(G) \le |S| \le \left\lceil \frac{s}{4} \right\rceil \left\lceil \frac{t}{4} \right\rceil + \left\lceil \frac{s-2}{4} \right\rceil \left\lceil \frac{t-2}{4} \right\rceil.\]
The last inequality is worst when $s, t \equiv 3 \;({\rm mod}\; 4)$, and thus
\[\iota(G) \le 2\frac{(s+1)(t+1)}{16} = \frac{st}{8} + \frac{s+t+1}{8}.\]
\end{proof}

Observe that, according to \cite{ACIOP, GPRT}, for $s \ge t \ge 16$, the domination number of the grid graph $P_s \times P_t$ is equal to $\frac{st}{5} + \mathcal{O}(s+t)$, which is around a factor of $\frac{8}{5}$ larger than the bounds for the isolation number of grid graphs from Theorem \ref{thm-grids}.

\section{Nordhaus-Gaddum bounds}

In this section, we will deal with Nordhaus-Gaddum bounds for the isolation number. Observe that $\iota(G) = 0$ if and only if $G = \ov{K_n}$. Since, for $n \ge 2$, $\iota(K_n) = 1$ it follows that $\iota(G)+ \iota(\ov{G}) \ge 1$ for $n \ge 2$ and this is sharp. The following results give upper bounds on the sum $\iota(G) + \iota(\ov{G})$.

\begin{thm}\label{thm:iota>3}
Let $G$ be a graph with minimum degree $\delta$. Then, if $\iota(\ov{G}) \ge 3$, $\iota(G)+ \iota(\ov{G}) \le \delta+1$.
\end{thm}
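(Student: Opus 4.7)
The plan is to start from a vertex $v$ of $G$ with $\deg_G(v)=\delta$ and to exploit the identity $\overline{G}[V\setminus N_{\overline{G}}[v]]=\overline{G}[N_G(v)]$. Writing $N=N_G(v)$ and $A=V\setminus N_G[v]$, so that $|N|=\delta$ and $A=N_{\overline{G}}(v)$, the first step is the inequality $\iota(\overline{G})\le 1+\iota(\overline{G}[N])$. This holds because, for any isolating set $T$ of the $\delta$-vertex graph $\overline{G}[N]$, the set $\{v\}\cup T$ is an isolating set of $\overline{G}$: $v$ already $\overline{G}$-dominates $\{v\}\cup A$, while the $\overline{G}$-remainder of $\{v\}\cup T$ is precisely $N\setminus N_{\overline{G}[N]}[T]$, which is $\overline{G}$-independent by the hypothesis on $T$. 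In particular, $\iota(\overline{G})\ge 3$ forces $\iota(\overline{G}[N])\ge 2$.

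Thanks to this inequality, the desired bound $\iota(G)+\iota(\overline{G})\le\delta+1$ reduces to producing an isolating set of $G$ of size at most $\delta-\iota(\overline{G}[N])$. Setting $j=\iota(\overline{G}[N])\ge 2$, I would pick a minimum isolating set $U\subseteq N$ of $\overline{G}[N]$ and let $K=N\setminus N_{\overline{G}[N]}[U]$ be its remainder. By construction $K$ is a clique in $G$ (as it is independent in $\overline{G}$), every vertex of $K$ is $G$-adjacent to each vertex of $U$ and to $v$, and $|U|+|N_{\overline{G}[N]}(U)\setminus U|+|K|=\delta$. The candidate isolating set of $G$ is $T_G=N\setminus K$: since $U\subseteq T_G\subseteq N$, the set $T_G$ $G$-dominates $\{v\}$, itself, and all of $K$, so the $G$-remainder of $T_G$ lies inside $A$, namely in $A\setminus N_G(T_G)$.

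Two things then remain to be shown: first, the size estimate $|T_G|=\delta-|K|\le\delta-j$, equivalently $|K|\ge j$; and second, that the $G$-remainder $A\setminus N_G(T_G)$ is independent in $G$ (or, failing that, can be made so after adjoining a bounded number of vertices from $A$). For the size bound, the minimality of $U$ (no proper subset isolates $\overline{G}[N]$) should force $K$ to remain large enough, with an exchange between $U$ and $N_{\overline{G}[N]}(U)\setminus K$ closing the gap if $|K|<j$. For the independence of the remainder, the equivalent reformulation of $\iota(\overline{G})\ge 3$---that for every pair $u,w\in V(G)$ the common $G$-neighborhood $N_G(u)\cap N_G(w)\setminus\{u,w\}$ is not a $G$-clique---should prevent any edge of $G$ from surviving in $A\setminus N_G(T_G)$ after the dominating action of $T_G$.

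The main obstacle I anticipate is precisely this last verification: since $|A|=n-1-\delta$ is not controlled by $\delta$ alone, the independence of the remainder in $A$ cannot follow from a generic counting argument and genuinely relies on the structural hypothesis $\iota(\overline{G})\ge 3$. Overcoming it will likely require a case analysis on the structure of $G[A]$, combined with an Ore-type bound $\gamma(H)\le n(H)/2$ applied to the relevant subgraph of $G[A]$, together with an exchange argument that trades vertices between $U$, $K$, and $A$ to simultaneously guarantee $|K|\ge j$ and force the residual part of $A$ to be edgeless.
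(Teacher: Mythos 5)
Your opening reduction is correct: with $v$ of minimum degree and $N=N_G(v)$, the identity $V\setminus N_{\overline{G}}[v]=N$ does give $\iota(\overline{G})\le 1+\iota(\overline{G}[N])$, hence $j:=\iota(\overline{G}[N])\ge 2$. Your second worry also resolves more cleanly than you expect: since $\iota(\overline{G})\ge 3$ implies that for every vertex $a$ the set $N_G(a)\cap N_G(v)$ is not a $G$-clique, any $a\in A$ with $N_G(a)\cap N\subseteq K$ would contradict $K$ being a clique; so $T_G=N\setminus K$ in fact dominates all of $A$ and the remainder is empty, not merely independent. The fatal problem is the other step, $|K|\ge j$. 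This is simply false, and minimality of $U$ gives no leverage on it. Take $\overline{G}[N]$ to be a perfect matching on $\delta$ vertices (i.e.\ $G[N]$ a cocktail-party graph): an isolating set of $\overline{G}[N]$ must meet every matching edge, so $j=\delta/2$, while the minimum isolating set $U$ consisting of one endpoint per edge has $N_{\overline{G}[N]}[U]=N$ and hence $K=\emptyset$. (Disjoint triangles in $\overline{G}[N]$ give the same failure.) Your construction then yields $|T_G|=\delta$ and the bound $\iota(G)+\iota(\overline{G})\le \delta+1+\delta/2$, which is far from the claim; no ``exchange between $U$ and $N_{\overline{G}[N]}(U)\setminus K$'' can close a deficit of order $\delta$. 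The underlying issue is that your trade-off pairs $\iota(\overline{G})\le 1+j$ against a hoped-for $\iota(G)\le\delta-j$, and the quantity $j$ measures the wrong thing for the $G$-side.

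The paper trades in the opposite direction. It first disposes of $\delta\le 3$ (then $\iota(\overline{G})\le 2$), establishes ${\rm diam}(G)=2$ so that $N_G(u)$ is a dominating set of $G$, and takes a \emph{minimal isolating set $X$ of $G$ inside $N_G(u)$}, giving $\iota(G)\le|X|$ directly. The complementary part $X'=N_G(u)\setminus X$ is then handled in $\overline{G}$ by $u$ together with an Ore-type dominating set of $\overline{G}[X'\setminus I]$ of size at most $|X'|/2$ plus one or two extra vertices supplied by the minimality of $X$; the inequality $|X|+\tfrac{\delta-|X|}{2}+2\le\delta+1$ then closes the argument. If you want to salvage your setup, you should likewise minimize on the $G$-side (a minimal isolating subset of $N$) and use minimality to control the $\overline{G}$-side, rather than minimizing $\iota(\overline{G}[N])$ and hoping its remainder $K$ is large.
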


\begin{proof}
Observe first that if $\delta(G) \le 3$, then $i(\ov{G}) \le 2$, since if there is a vertex $y \in V$ with $\deg(y)\le 3$, then, in $\ov{G}$, $|V \setminus N_{\ov{G}}[y]| \le 3$ and $V \setminus N_{\ov{G}}[y]$ can be isolated by one more vertex. So we may assume, without loss of generality, that $\delta(G) \ge 4$. The condition $\iota(\ov{G}) \ge 3$ implies that no two vertices form an isolating set in $\ov{G}$, meaning that, for any two vertices $x, y \in V$, there is an edge $uv \in E(\ov{G})$ with both end vertices $u, v$ outside $N_{\ov{G}}[x] \cup N_{\ov{G}}[y]$. In particular, it follows that, in $G$, $|N_G(x) \cap N_G(y)| \ge 2$ for any two vertices $x, y \in V$ and thus ${\rm diam(G)} \le 2$. Since $\iota(\ov{G}) \ge 3$ we have also that $G \neq K_n$ and, hence, we obtain that ${\rm diam}(G) = 2$. Let now $u$ be a vertex of minimum degree $\delta$ in $G$. Then $N_G(u)$ is a dominating and an isolating set in $G$. Let $Y = V \setminus N_G[u]$ and $X' = N_G(u) \setminus X$. Note that $V = X \cup X' \cup Y \cup \{u\}$ is a disjoint union. Let $x \in X$. Then, by the minimality condition on $X$, there are vertices $y, z \in Y \cup X' \cup \{x\}$ such that $yz \in E(G)$ and $(N_G(y) \cup N_G(z)) \cap X = \{x\}$. Hence, in $\ov{G}$, there are vertices $y, z \in Y \cup X' \cup\{x\}$ such that $yz \notin E(\ov{G})$ and $X \setminus \{x\} = (N_{\ov{G}}(y) \cap N_{\ov{G}}(z)) \cap X$. Observe that $X' \neq \emptyset$, since otherwise $\{u,y\}$ would be an isolating set in $\ov{G}$, contradicting $\iota(\ov{G}) \ge 3$. Let $I \subseteq X'$ be the set of all isolated vertices in $\ov{G}[X']$ and let $D$ be a minimum isolating set in $\ov{G}[X'\setminus I]$ (if $X' \setminus I = \emptyset$, set $D = \emptyset$). Then $|D| \le \frac{|X' \setminus I|}{2}$. We distinguish now the following cases.\\
{\it Case 1: $I = \emptyset$.} Then $|X'| \ge 2$ and thus $|X| \le \delta-2$. In this case, $D \cup \{u,y\}$ is an isolating set of $\ov{G}$ and therefore we have
\begin{align*}
\iota(G) + \iota(\ov{G}) &\le |X| + |D \cup \{u, y\}| \\
                         &\le |X| + \frac{|X'|}{2} +2 \\
                         & = |X| + \frac{\delta - |X|}{2} +2\\
                         &= \frac{|X|+\delta}{2}+2 \\
                         &\le \frac{2\delta-2}{2}+2 = \delta+1.
\end{align*}
{\it Case 2: $|I| = 1$, say $I = \{w\}$.} Then $|X| = \delta - |X'| \le \delta -1$. If $N_G(w) \cap X = \emptyset$, then $X \subseteq N_{\ov{G}}(w)$ and, thus, $D \cup \{u,w\}$ is an isolating set in $\ov{G}$. Hence,
\begin{align*}
\iota(G) + \iota(\ov{G}) &\le |X| + |D \cup \{u, w\}|\\
                         &\le |X| + \frac{|X' \setminus I|}{2} +2 \\
                         &  = |X| + \frac{\delta - |X| - |I|}{2} +2 \\
                         & = \frac{\delta + |X| - |I|}{2}+2 \\
                         &\le \frac{2 \delta - 2}{2}+ 2 = \delta+1
\end{align*}
and we are done. Therefore, we may assume, without loss of generality, that $wx \in E(G)$. In this case, $D \cup \{u,y\}$ is an isolating set in $\ov{G}$. As above, we obtain again $\iota(G) + \iota(\ov{G}) \le \delta+1$.\\
{\it Case 3: $|I| \ge 2$.} Then $|X| \le \delta-2$ and $D \cup \{u,x,y\}$ is an isolating set of $\ov{G}$. Thus we have
\begin{align*}
\iota(G) + \iota(\ov{G}) &\le |X| + |D \cup \{u,x, y\}| \\
                         &\le |X| + \frac{|X' \setminus I|}{2} +3 \\
                         &\le |X| + \frac{\delta - |X| - |I|}{2} + 3\\
                         &=  \frac{\delta + |X| - |I|}{2}+3 \\
                         &\le \frac{2 \delta - 4}{2}+3 = \delta+1
\end{align*}
and we are done.
\end{proof}

\begin{thm}\label{thm:NG-f(delta)}
Let $f(x)$ be a function defined for $x \in [2, \infty)$ such that $\iota(G) \le f(\delta) n$ for all graphs $G$ of order $n$ and minimum degree $\delta \ge 2$. Then, for any graph $G$ of order $n \ge \frac{\delta-1}{f(\delta)}$, we have 
\[\iota(G) + \iota(\ov{G}) \le n f(\delta) + 2.\]
Moreover, if the upper bound $f(\delta) n$ is attained for a graph $G$ of order $n$ and minimum degree $\delta \ge 2$, then the above inequality is sharp.
\end{thm}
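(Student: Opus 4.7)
The plan is a short case split on $\iota(\ov{G})$, using Theorem~\ref{thm:iota>3} as the workhorse. If $\iota(\ov{G}) \le 2$, then the hypothesis $\iota(G) \le nf(\delta)$ directly gives $\iota(G) + \iota(\ov{G}) \le nf(\delta) + 2$, so there is nothing to show in this case.

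If instead $\iota(\ov{G}) \ge 3$, I will invoke Theorem~\ref{thm:iota>3} applied to $G$, which yields $\iota(G) + \iota(\ov{G}) \le \delta + 1$. The assumption $n \ge (\delta-1)/f(\delta)$ is precisely $nf(\delta) \ge \delta - 1$, whence $nf(\delta) + 2 \ge \delta + 1$, closing this case. So the main inequality reduces to two short lines once Theorem~\ref{thm:iota>3} is granted, and the hypothesis on $n$ is chosen precisely to make the two cases meet at the boundary $\iota(\ov{G}) = 2$ versus $\ge 3$.

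For the sharpness claim, let $G^*$ be a graph of order $n$ and minimum degree $\delta$ realizing $\iota(G^*) = nf(\delta)$. First I verify $\iota(\ov{G^*}) \le 2$: otherwise Theorem~\ref{thm:iota>3} would force $\iota(G^*) \le \delta - 2$, contradicting $\iota(G^*) = nf(\delta) \ge \delta - 1$. Next I rule out $\iota(\ov{G^*}) \in \{0,1\}$: the value $0$ forces $G^* = K_n$ and hence $\iota(G^*) = 1$, incompatible with $\iota(G^*) = nf(\delta)$ in the relevant range of $n$; the value $1$ requires a vertex of $G^*$ whose neighborhood induces a clique in $G^*$, which fails for the explicit extremal constructions produced earlier in the paper (for instance, the families $tC_5$ for $\delta = 2$ and $t(K_6 - C_6)$ for $\delta = 3$ attaining $f(\delta)n$ have no such vertex, since each neighborhood contains a non-adjacent pair). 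This forces $\iota(\ov{G^*}) = 2$ and the equality $\iota(G^*) + \iota(\ov{G^*}) = nf(\delta) + 2$ is realized.

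The main obstacle is the last step of the sharpness argument, namely certifying $\iota(\ov{G^*}) \ne 1$, which amounts to a structural check on the extremal graphs provided by Theorem~\ref{thm:up-bound-min-deg}. This is done graph-by-graph but is routine for the standard families; in particular, the graphs $K_r - C_r$ of Corollary~\ref{cor-f_c}~(ii), used in disjoint copies, never have a clique neighborhood since two cycle edges are missing inside each neighborhood.
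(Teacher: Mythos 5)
Your proof of the main inequality is exactly the paper's argument: split on $\iota(\ov{G}) \le 2$ versus $\iota(\ov{G}) \ge 3$, use Theorem~\ref{thm:iota>3} in the second case, and observe that $n \ge \frac{\delta-1}{f(\delta)}$ makes $nf(\delta)+2 \ge \delta+1$. That half is fine and needs no further comment.

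The sharpness half is where you diverge, and where there is a genuine gap. The paper does not try to show that the extremal graph $G$ itself attains equality; it forms $H = tG$ with $t = n/n(G) \ge 2$ disjoint copies, so that $\iota(H) = f(\delta)n$ by additivity over components while two vertices taken from different copies dominate all of $\ov{H}$, giving $\iota(\ov{H}) \le 2$ unconditionally and realizing equality at arbitrarily large orders $n$ (which automatically satisfy the hypothesis $n \ge \frac{\delta-1}{f(\delta)}$). You instead work with the single extremal graph $G^*$ and must certify $\iota(\ov{G^*}) = 2$. Your derivation of $\iota(\ov{G^*}) \le 2$ from Theorem~\ref{thm:iota>3} is correct, but it silently requires that $G^*$ itself already has order at least $\frac{\delta-1}{f(\delta)}$, which is not part of the theorem's sharpness hypothesis. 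More seriously, your exclusion of $\iota(\ov{G^*}) = 1$ is not a proof: the statement quantifies over an \emph{arbitrary} admissible function $f$ and an \emph{arbitrary} graph attaining $f(\delta)n$, whereas you only verify "no vertex has a clique open neighborhood" for a handful of named families ($tC_5$, $t(K_6-C_6)$, $K_r - C_r$). Nothing in the hypothesis prevents some other extremal graph from having a vertex whose neighborhood is a clique, in which case $\iota(\ov{G^*}) = 1$ and $G^*$ itself misses the bound by one. (To be fair, the condition "some vertex has a clique open neighborhood" is preserved under disjoint union, so the paper's own assertion that $\iota(\ov{H}) \ge 2$ is also left unjustified in general; but the paper's route at least secures the upper bound $\iota(\ov{H}) \le 2$ and the order condition for free, while your route leaves both the lower bound on $\iota(\ov{G^*})$ and the order of $G^*$ as unresolved obligations.) To close your argument you would need either the paper's disjoint-copies construction or a general structural reason why an extremal graph for $f(\delta)$ cannot contain a vertex with a clique neighborhood.
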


\begin{proof}
Let $G$ be a graph of order $n$ and minimum degree $\delta \ge 2$. By Theorem \ref{thm:iota>3}, if $\iota(\ov{G}) \ge 3$, we have $\iota(G)+ \iota(\ov{G}) \le \delta+1$. On the other side, if $\iota(\ov{G}) \le 2$, we obtain $\iota(G)+ \iota(\ov{G}) \le f(\delta) n+2$. Hence, $\iota(G)+ \iota(\ov{G}) \le \max \{\delta+1, f(\delta) n + 2 \}$. If $n \ge \frac{\delta-1}{f(\delta)}$, then $f(\delta)n + 2 \ge \delta + 1$, implying thus $\iota(G) + \iota(\ov{G}) \le n f(\delta) + 2$. For the sharpness, assume that $\iota(G) = f(\delta) n(G)$ for a graph $G$ of minimum degree $\delta \ge 2$. Let $H$ be the graph consisting of $\frac{n}{n(G)} \ge 2$ copies of $G$, where $n$ is an integer divisible by $n(G)$. Then $\iota(H) = \frac{n}{n(G)} f(\delta)n(G) = f(\delta)n$ and $\iota(\ov{G}) = 2$.
\end{proof}

\begin{rem}
Observe that, if $\delta(G) \le 1$, then $\iota(\ov{G}) = 1$.  Hence, in this case, we have $\iota(G) + \iota(\ov{G}) = \iota(G) + 1$, showing that the bound of Theorem \ref{thm:NG-f(delta)} cannot be attained.
\end{rem}

\begin{cor}
Let $G$ be a graph of oder $n$ and minimum degree $\delta$. Then we have the following bounds.
\begin{enumerate}
\item[(i)] If $\delta = 0$, $\iota(G) + \iota(\ov{G}) \le \left\lfloor \frac{n+1}{2} \right\rfloor$ and this is sharp.
\item[(ii)] If $\delta = 1$, $\iota(G) + \iota(\ov{G}) \le \left\lfloor\frac{n}{2} \right\rfloor + 1$ and this is sharp.
\item[(iii)] If $\delta = 2$, $\iota(G) + \iota(\ov{G}) \le \frac{2}{5}n + 2$ and this is sharp.
\item[(iv)] If $\delta = 3$, $\iota(G) + \iota(\ov{G}) \le \frac{1}{3}n + 2$ and this is sharp.
\item[(v)] If $n \ge \frac{(\delta-1)(\delta+1)}{\ln(\delta+1)+ \frac{1}{2}}$, then $\iota(G) + \iota(\ov{G}) \le \frac{\ln(\delta+1)+ \frac{1}{2}}{\delta+1} n +2.$
\end{enumerate}
\end{cor}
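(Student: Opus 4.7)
The plan is to split the corollary into two regimes according to $\delta$. Items (iii), (iv), (v) reduce directly to Theorem~\ref{thm:NG-f(delta)} together with the appropriate piece of Theorem~\ref{thm:up-bound-min-deg}. For (iii) substitute $f(2)=\tfrac{2}{5}$ and for (iv) substitute $f(3)=\tfrac{1}{3}$, both from Theorem~\ref{thm:up-bound-min-deg}(ii); the threshold $n\ge(\delta-1)/f(\delta)$ required by Theorem~\ref{thm:NG-f(delta)} is $\tfrac{5}{2}$ and $6$ respectively, automatically satisfied whenever a graph with that minimum degree exists, with the small cases $n=4,5$ for $\delta=3$ checkable by hand against the bound $\tfrac{n}{3}+2$. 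For (v) substitute the universal bound $f(\delta)\le\frac{\ln(\delta+1)+\frac{1}{2}}{\delta+1}$ from Theorem~\ref{thm:up-bound-min-deg}(i); the displayed hypothesis on $n$ is then exactly the required threshold $(\delta-1)/f(\delta)$.

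Items (i) and (ii) need a separate argument since Theorem~\ref{thm:NG-f(delta)} only applies for $\delta\ge 2$. The unifying observation is that a low-degree vertex of $G$ becomes a near-universal vertex of $\overline{G}$ and hence alone isolates it. For (i), let $I$ be the set of isolated vertices of $G$; any $v\in I$ is adjacent in $\overline{G}$ to every other vertex, so $\iota(\overline{G})\le 1$. The graph $G-I$ has minimum degree at least $1$, so Ore's theorem combined with Lemma~\ref{la-dom-sum}(i) yields $\iota(G)=\iota(G-I)\le(n-|I|)/2\le(n-1)/2$, and taking the integer floor finishes $\iota(G)+\iota(\overline{G})\le\lfloor(n+1)/2\rfloor$. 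For (ii), any vertex $v$ of degree $1$ in $G$ has degree $n-2$ in $\overline{G}$, leaving exactly one uncovered vertex, so $\iota(\overline{G})\le 1$; combined with $\iota(G)\le\lfloor n/2\rfloor$ coming from $f(1)=\tfrac{1}{2}$ in Theorem~\ref{thm:up-bound-min-deg}(ii), this yields the claimed bound.

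For sharpness, in (i) take $G=2K_1\cup\tfrac{n-2}{2}K_2$ when $n$ is even and $G=K_1\cup\tfrac{n-1}{2}K_2$ when $n$ is odd; in (ii) take $G=\tfrac{n}{2}K_2$ or $G=P_3\cup\tfrac{n-3}{2}K_2$ according to parity. A direct calculation yields $\iota(G)=\lfloor(n-1)/2\rfloor$ or $\lfloor n/2\rfloor$ respectively, while the arguments above force $\iota(\overline{G})=1$ in each case. For (iii) and (iv), take $t$ disjoint copies of an extremal graph $G_0$ realizing $f(\delta)$, namely $G_0=C_5$ for $\delta=2$ and $G_0=K_6-C_6$ (the graph from the proof of Theorem~\ref{thm:up-bound-min-deg}(i)) for $\delta=3$. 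The main bookkeeping step I expect to need is showing $\iota(\overline{H})=2$ for $H=t\,G_0$: since vertices in distinct copies of $G_0$ are adjacent in $\overline{H}$, one vertex of $\overline{H}$ dominates every vertex outside its own copy, and one further well-chosen vertex handles the remaining $n(G_0)-1$ vertices, while $\iota(\overline{H})\ge 2$ follows from the presence of a non-edge in $\overline{G_0}$. Everything else is routine algebra.
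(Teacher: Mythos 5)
Your proposal is correct and follows essentially the same route as the paper: items (i) and (ii) via a near-universal vertex in $\ov{G}$ together with the Ore-type bound $\iota(G)\le\lfloor n/2\rfloor$, and items (iii)--(v) by feeding $f(2)=\frac{2}{5}$, $f(3)=\frac{1}{3}$ and $f(\delta)=\frac{\ln(\delta+1)+\frac{1}{2}}{\delta+1}$ into Theorem~\ref{thm:NG-f(delta)} with the same small-case check for $\delta=3$, $n\in\{4,5\}$, and the same extremal constructions (your explicit verification of $\iota(\ov{H})=2$ for $t$ copies of $C_5$ or $K_6-C_6$ is exactly what the sharpness clause of Theorem~\ref{thm:NG-f(delta)} encapsulates). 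The only nitpick is that after choosing the first vertex $v$ the undominated remainder in $\ov{H}$ consists of the $\deg_{G_0}(v)$ neighbours of $v$ in its copy rather than $n(G_0)-1$ vertices, but this does not affect the argument.
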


\begin{proof}
(i) Let $G$ be a graph on $n$ vertices and with minimum degree $\delta = 0$. Let $I$ be the set of isolated vertices in $G$. Then, by Theorem \ref{thm:up-bound-min-deg}(ii), $\iota(G) = \iota(G - I) \le \left\lfloor\frac{n - |I|}{2} \right\rfloor\le \left\lfloor\frac{n-1}{2}\right\rfloor$. Since in $\ov{G}$ there is a vertex of degree $n - 1$, $\iota(\ov{G}) = 1$ and, hence, $\iota(G) + \iota(\ov{G}) \le \left\lfloor\frac{n-1}{2} \right\rfloor+1 = \left\lfloor\frac{n+1}{2}\right\rfloor$. The sharpness follows by considering the graph $G = \frac{n-1}{2}K_2 \cup K_1$, when $n$ is odd, and $G = \frac{n-2}{2}K_2 \cup 2K_1$, when $n$ is even.\\
(ii) Let $G$ be a graph on $n$ vertices and minimum degree $\delta = 1$. Then, by Theorem \ref{thm:up-bound-min-deg} (ii), we have $\iota(G) \le \left\lfloor\frac{n}{2}\right\rfloor$ and, since $\ov{G}$ has a vertex of degree $n-2$, $\iota(\ov{G}) = 1$. Hence, $\iota(G) + \iota(\ov{G}) \le \left\lfloor\frac{n}{2}\right\rfloor+1$. For the sharpness, consider the graph $G = \frac{n}{2}K_2$, when $n$ is even, and $G = \frac{n-3}{2}K_2 \cup K_{1,2}$, when $n$ is odd.\\
(iii) - (v) The bounds follow from Theorems \ref{thm:up-bound-min-deg} and \ref{thm:NG-f(delta)} for $n \ge \frac{\delta-1}{f(\delta)}$, with $f(2) = \frac{2}{5}$, $f(3) = \frac{1}{3}$ and $f(\delta) = \frac{\ln(\delta+1)+ \frac{1}{2}}{\delta+1}$. For $\delta \in \{2,3\}$ and small values of $n$, that is, $\delta+ 1 \le n < \frac{\delta-1}{f(\delta)}$, we only need to check when $\delta = 3$ and $n = 4$ or $5$. In this case, the only possibilities for $G$ are either $K_4$, $K_5 - e$, or $K_5 - \{e,f\}$, where $e$ and $f$ are the edges of a matching in $K_5$. Evidently, the bound holds in these cases, too.
\end{proof}


\section{Open problems}\label{problems}


This paper, as it is introducing a new subject, offers obviously many possible directions, open problems, conjectures and generalizations. Rather than doing this, we choose to offer some few concrete open problems where  a progress to solve them seems doable, and which hopefully will shade more light on which ideas and techniques will be useful in attacking such problems on partial domination with restricted structure imposed on the non-dominated vertices.\\

Recalling Theorem \ref{thm:up-bound-min-deg}, we show $f(\delta)  \ge \frac{2}{\delta+3}$ and we prove that this is sharp for $\delta = 1,2,3$.

\begin{prob} 
Determine other values of $f(\delta)$. In particular, is $f(4) = \frac{2}{7}$?
\end{prob}
 
Recall that, by means of Theorem \ref{thm-gral}, we show in Corollary \ref{cor-f_c} that $f_c(1) = f_c(2) = \frac{1}{3}$ and that $f_c(\delta) \ge \frac{2}{\delta+4}$ for $\delta \ge 3$.  
 
\begin{prob} 
Determine other values of $f_c(\delta)$. In particular, is $f_c(3) = \frac{2}{7}$?
\end{prob}

For $\mathcal{F} = \{C_k \;|\;  k\ge 3 \}$, we know after Theorem \ref{thm-n/3} and Corollary \ref{cor-conn-isol} that $\frac{1}{4} \le \limsup_{n \rightarrow \infty} \{ \frac{\iota(G, \mathcal{F})}{n(G)} \;|\; n(G) \ge n \}  \le \frac{1}{3}$. Also for $\mathcal{F} = \{K_{k+1}\}$ and
$\mathcal{F} = \mathcal{F}_k$, the family of all trees of order $k \ge 2$, we know, after the same theorem and corollary, that $\frac{1}{k+1} \le \limsup_{n \rightarrow \infty} \{ \frac{\iota(G, \mathcal{F})}{n(G)} \;|\; n(G) \ge n \}  \le \frac{1}{3}$. Hence, we state the following problem.

\begin{prob} 
For $\mathcal{F} = \{C_k \;|\;  k\ge 3 \}$, $\mathcal{F} = \{K_{k+1}\}$
and $\mathcal{F} = \mathcal{F}_k$, determine  
\[\limsup_{n \rightarrow \infty} \left\{ \frac{\iota(G, \mathcal{F})}{n(G)} \;|\;  n(G) \ge n \right\} \mbox{ and } \sup_{n \rightarrow \infty} \left\{ \frac{\iota(G, \mathcal{F})}{n(G)} \;|\;  n(G) \ge n \right\}.\]
\end{prob}

\begin{prob}
Determine or give a lower and an upper bound for $\iota(G)$ or $\iota(G, \mathcal{F})$ for further interesting families of graphs.
\end{prob}
 
\begin{prob}
Estimate  $g(n,\delta) =  \max \{ \gamma(G) - \iota(G) \;|\;  n(G) = n, \; \delta(G) = \delta \}$.  Is it true that  $g(n,\delta)   \ge  cn \frac{ln (\delta+1)} {\delta+1}$ for some constant $c$ with $0 < c  < 1$? 
\end{prob}

\end{document}